\newcommand*\circled[1]{\tikz[baseline=(char.base)]{
            \node[shape=circle,draw,inner sep=2pt] (char) {#1};}}
\newcommand{\blue}[1]{\textcolor{blue}{\bf #1}}
\theoremstyle{plain}
\newtheorem{Thm}{Theorem}
\newtheorem{Lem}[Thm]{Lemma}
\newtheorem{Cor}[Thm]{Corollary}
\newtheorem{Prop}[Thm]{Proposition}
\newtheorem{Def}[Thm]{Definition}
\newtheorem{Rem}[Thm]{Remark}
\newcommand{\bfc}{{\mathbb{C}}}
\newcommand{\bfr}{{\mathbb{R}}}
\newcommand{\id}{\operatorname{id}}
\def\cpb{\hbox{$\overline{{\Bbb C}P^2}$}}
\def\a{\alpha}
\def\b{\beta}
\def\d{\delta} \def\D{\Delta}
\def\g{\gamma}
\def\G{\Gamma}
\begin{document}

\title{Unbraided wiring diagrams for Stein fillings of lens spaces}

\author{Mohan Bhupal and Burak Ozbagci}

\address{Department of Mathematics,  METU, Ankara, Turkey, \newline bhupal@metu.edu.tr}
\address{Department of Mathematics, Ko\c{c} University, Istanbul, Turkey
\newline bozbagci@ku.edu.tr}


\begin{abstract}

In a previous work \cite{bo}, we constructed a planar Lefschetz fibration on each Stein filling of any lens space equipped with its canonical contact structure. Here we  describe an algorithm to draw an {\em unbraided} wiring diagram whose associated planar Lefschetz fibration obtained by the method of Plamenevskaya and  Starkston \cite{ps}, where the lens space with its  canonical contact structure is viewed as the contact link of a cyclic quotient singularity,  is equivalent to the Lefschetz fibration we constructed on each Stein filling of the lens space at hand. Coupled with the work of  Plamenevskaya and  Starkston, we obtain the following result as a corollary:  The wiring diagram we describe can be extended to an arrangement of symplectic graphical disks  in $\bfc^2$ with marked points, including all the intersection points of these disks,  so that by removing the proper transforms of these disks from the blowup of $\bfc^2$ along those marked points one recovers the Stein filling along with the Lefschetz fibration. Moreover, the arrangement is related to the decorated plane curve germ representing the cyclic quotient singularity by a smooth graphical homotopy.

As another application, we set up an explicit bijection between the Stein fillings of any lens space with its  canonical contact structure, and the Milnor fibres of the corresponding cyclic quotient singularity, which was first obtained by  N\'{e}methi and Popescu-Pampu \cite{npp}, using different methods.
\end{abstract}

\maketitle

\section{Introduction}

In their recent work, Plamenevskaya and  Starkston   \cite{ps} showed that every Stein filling of the link of  a rational surface singularity with reduced fundamental cycle,  equipped with its canonical contact structure, can be obtained from a configuration of symplectic graphical disks in $\mathbb{C}^2$ with marked points including all the intersection points of these disks, by removing the union of the proper transforms of these disks  from the blowup of $\bfc^2$ at the marked points. Their  purely topological proof relies on a theorem of Wendl \cite{w}, which implies that each Stein filling of the contact singularity link of the type above admits a planar Lefschetz fibration over $D^2$, since the contact link itself is supported by a planar open book (\cite{ab},\cite{nt}), and moreover, the Lefschetz fibration corresponds to a positive factorization of the monodromy of this open book. In their proof,  Plamenevskaya and  Starkston developed a method to reverse-engineer a braided wiring diagram producing any such factorization, and then extended this diagram to an arrangement of symplectic graphical disks which, in turn, gives the Stein filling along with the Lefschetz fibration via the method described above (see Section~\ref{sec: bwd} for more details of their construction).

In the present  paper, we focus on cyclic quotient singularities---a subclass of rational surface singularities with reduced fundamental cycle. It is well-known that the oriented link of any cyclic quotient singularity is orientation preserving diffeomorphic to a lens space $L(p,q)$.
Let $\xi_{can}$  denote the canonical contact structure on the singularity link $L(p,q)$. In \cite{l}, Lisca showed that any minimal symplectic filling of the contact $3$-manifold $(L(p,q), \xi_{can})$, is orientation-preserving diffeomorphic to some $4$-manifold $W_{p,q}(\textbf{n})$ (see Section~\ref{sec: lens} for its definition). Moreover, he showed that $W_{p,q}(\textbf{n})$  is in fact a Stein filling of  $(L(p,q), \xi_{can})$. 

In \cite{bo}, the authors constructed a {\em planar} Lefschetz fibration $W_{p,q}(\textbf{n}) \to D^2$, by explicitly describing the ordered set of vanishing cycles on a disk with holes. In this paper, our primary goal is to describe an algorithm to draw a wiring diagram, which turns out  to be {\em unbraided},
that corresponds to each of these planar Lefschetz fibrations via the work of Plamenevskaya and  Starkston.

\begin{Thm}\label{thm: main} There is an algorithm to draw an explicit unbraided wiring diagram whose associated planar Lefschetz fibration obtained by the method of Plamenevskaya and  Starkston \cite{ps} is equivalent to the planar Lefschetz fibration $W_{p,q}(\textbf{n}) \to D^2$  constructed by the authors in \cite{bo}.
\end{Thm}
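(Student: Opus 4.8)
The plan is to prove Theorem~\ref{thm: main} by establishing that two a priori different planar Lefschetz fibrations on each Stein filling $W$ of $(L(p,q), \xi_{can})$ are equivalent: the one constructed explicitly in \cite{bo} via an ordered set of vanishing cycles on a disk with holes, and the one produced by the Plamenevskaya--Starkston machinery from a wiring diagram that I must construct. Since the Plamenevskaya--Starkston method is a reverse-engineering procedure that takes a positive factorization of an open book monodromy and produces a braided wiring diagram, the natural strategy is to \emph{feed their machine the specific factorization coming from \cite{bo}} and show that the output diagram can be isotoped (or is manifestly) unbraided. Thus the theorem is really two claims bundled together: an existence/construction claim (the algorithm produces a well-defined wiring diagram) and an equivalence claim (its associated fibration matches the \cite{bo} fibration).

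First I would recall from \cite{bo} the precise combinatorial data describing the vanishing cycles on the planar page, since these are parametrized by the continued fraction expansion of $p/q$ (equivalently by Lisca's classification of fillings). The key observation to exploit is that for cyclic quotient singularities the vanishing cycles have a particularly simple, ``linear'' structure: each vanishing cycle encircles a consecutive block of holes in the planar page, and crucially the cycles can be arranged so that the associated curves in the base do not braid around one another. I would make this precise by translating each vanishing cycle into a wire, where a hole in the page corresponds to a puncture and a Dehn twist corresponds to a crossing pattern of wires. The heart of the construction is an explicit dictionary: given the ordered factorization from \cite{bo}, read off at each stage which wires cross, and verify that one can choose the heights/orderings of the wires so that every pair of wires crosses transversally without any wire passing over and then back under another---this is exactly the unbraided condition.

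The main obstacle I expect is the \textbf{equivalence claim}, i.e., verifying that the fibration reconstructed from my wiring diagram via Plamenevskaya--Starkston genuinely reproduces the \cite{bo} vanishing cycles up to the allowable moves (Hurwitz moves, global conjugation, and the relabeling inherent in the construction), rather than merely producing a fibration on the same total space. The difficulty is that the Plamenevskaya--Starkston construction is stated for general rational singularities with reduced fundamental cycle and involves choices (of marked points, of how the disks are capped), so I must show these choices can be pinned down consistently and that the resulting monodromy factorization agrees \emph{on the nose} with the one in \cite{bo}. I would handle this by an inductive argument on the length of the continued fraction expansion (equivalently, on the number of vanishing cycles): establish a base case for the simplest lens spaces, then show that extending the continued fraction by one term corresponds both to adding one wire to the diagram in a controlled way and to appending one vanishing cycle in the \cite{bo} list, with the two operations commuting with the Plamenevskaya--Starkston functor.

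A secondary obstacle is bookkeeping: I must verify that the braided wiring diagram output of their reverse-engineering step, when applied to my input, \emph{is} actually isotopic to an unbraided one rather than merely claiming it. I would address this by analyzing the over/under data of the crossings directly from the monodromy factorization and showing that all crossings can be simultaneously flattened, which is the precise sense in which ``unbraided'' strengthens the general Plamenevskaya--Starkston picture. Once both the construction and the equivalence are in hand, Theorem~\ref{thm: main} follows, and the corollaries advertised in the abstract---the recovery of the Stein filling from the disk arrangement and the bijection with Milnor fibers---follow by combining this equivalence with the results of \cite{ps} and \cite{npp}.
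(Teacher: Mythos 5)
Your core strategy has a genuine gap, and it is precisely the one the paper goes out of its way to avoid. You propose to feed the factorization from \cite{bo} into the Plamenevskaya--Starkston \emph{reverse-engineering} procedure and then argue that the resulting braided wiring diagram can be ``flattened'' to an unbraided one by analyzing over/under crossing data. But that reverse-engineering algorithm involves many choices (cf.\ \cite[Remark 5.7]{ps}), its output is not canonical, and there is no mechanism in your proposal for controlling the braiding of whatever it produces; the claim that ``all crossings can be simultaneously flattened'' is exactly the hard part, asserted rather than proved. The paper never runs the reverse-engineering algorithm at all. Instead it constructs the diagram \emph{directly}: the blowup algorithm of Section~\ref{sec: blowup} builds an unbraided diagram wire-by-wire, in parallel with the same blowup sequence $(0)\to(1,1)\to\cdots\to\mathbf{n}$ that defines Lisca's filling $W_{p,q}(\mathbf{n})$, and the twisting algorithm of Section~\ref{sec: twisting} appends full twists of geometrically consecutive wire-sets encoding $\mathbf{m}=\mathbf{b}-\mathbf{n}$. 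Unbraidedness then holds by construction, and only the \emph{forward} Plamenevskaya--Starkston map (diagram $\to$ ordered vanishing cycles) is ever invoked.

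Your equivalence step is also missing its essential content, and your dictionary is partly garbled: wires correspond to holes of the planar fiber, while vanishing cycles correspond to \emph{marked points}, not wires; moreover the cycles of \cite{bo} come in two families---the $\alpha$-curves from stabilization and the $\gamma$-curves with multiplicities $b_i-n_i$ from surgery---which forces the two distinct types of marked points ($x$-type and $y$-type) in the diagram $\mathcal{W}_{\mathbf{n}}(\mathbf{m})$ of Definition~\ref{def: wd}, a structure absent from your plan. More seriously, your inductive step (``adding one wire and appending one vanishing cycle, with the two operations commuting with the Plamenevskaya--Starkston functor'') fails as stated: by Definition~\ref{def: cycle}, the cycle attached to a marked point is the image of a convex curve under the composition of half-twists of \emph{all} marked points to its right, so an interior blowup, which splits a hole, perturbs every previously computed curve---nothing merely commutes. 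Carrying the induction requires an invariant that survives these half-twists; this is what Lemma~\ref{lem: consec}, the left/right-convexity and lobe structure of Proposition~\ref{prop: lobe}, and Proposition~\ref{prop: alpha} supply, together with Lemma~\ref{lem: turn} to absorb the unavoidable mirror-image and order-reversal discrepancy between the conventions of \cite{bo} and \cite{ps} (a discrepancy your proposal does not notice). Without analogues of these ingredients the induction cannot close, so the proposal remains an outline rather than a proof.
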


As we mentioned in the first paragraph, Plamenevskaya and  Starkston described an algorithm (see \cite[Section 5.4]{ps}) to obtain a braided wiring diagram from the  ordered set of vanishing cycles of a planar Lefschetz fibration, by reverse-engineering. Their algorithm involves many choices (see \cite[Remark 5.7]{ps}) and although we do not rely on their reverse-engineering algorithm here, we show {\em indirectly} that by appropriate choices, one can obtain ``unbraided" wiring diagrams, which means that all the braids in the diagram can be chosen to be the identity.

The article of Plamenevskaya and Starkston was admittedly inspired by the work of de Jong and van Straten \cite{djvs}, who  studied the Milnor fibres and deformation theory of sandwiched singularities---which includes rational surface singularities with reduced fundamental cycle. In their work, deformation theory of a surface singularity in the given class is reduced to deformations of the germ of a reducible plane curve representing the singularity.

In particular, to any cyclic quotient singularity germ, de Jong and van Straten associate a decorated germ of a reduced plane curve singularity $\mathcal C=C_1\cup\cdots\cup C_n\subset (\bfc^2,0)$ with {\em smooth} irreducible branches, where the decoration on each $C_i$ is a certain positive integer, which we omit here from the notation for simplicity.  The outcome of de Jong and van Straten's construction is that there is a bijection between one-parameter deformations of the cyclic quotient singularity and ``picture deformations'' of $\mathcal C $ representing that singularity (see Section~\ref{sec: bwd} for more details of their construction).

Moreover, Plamenevskaya and  Starkston \cite[Proposition 5.5]{ps} extend any given braided wiring diagram (viewed as a collection of intersecting curves in $\bfr \times \bfc$) to a collection of symplectic disks in $\bfc \times \bfc$. Consequently, as a corollary to Theorem~\ref{thm: main}, we obtain the following result coupled with \cite[Proposition 5.8]{ps}.

\begin{Cor} \label{cor: arrangement} For each Stein filling $W_{p,q}(\textbf{n})$ of the contact $3$-manifold $(L(p,q), \xi_{can})$, there is an explicit  collection of symplectic graphical disks $\G_1, \ldots , \G_n$ in $\bfc^2$, with marked points $p_1, \ldots, p_m \subset \bigcup_i \G_i$, which include all the intersection points of these disks, so that by removing the union of the proper transforms $\widetilde{\G}_1,  \ldots, \widetilde{\G}_n$ of $\G_1,  \ldots, \G_n$, from the blowup of $\bfc^2$ at these marked points we obtain $W_{p,q}(\textbf{n})$ along with the Lefchetz fibration mentioned in Theorem~\ref{thm: main}. Moreover, the collection of symplectic graphical disks $\G_1, \ldots , \G_n$ is related to the decorated plane curve germ $\mathcal C=C_1\cup\cdots\cup C_n\subset (\bfc^2,0)$  representing the cyclic quotient surface singularity at hand by a smooth graphical homotopy.
\end{Cor}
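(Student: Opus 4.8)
The plan is to deduce the corollary by feeding the output of Theorem~\ref{thm: main} into the disk-extension and blow-up machinery of Plamenevskaya and Starkston, treating our unbraided wiring diagram as a degenerate braided one. First I would invoke Theorem~\ref{thm: main} to fix, for the prescribed Stein filling $W$, the explicit unbraided wiring diagram $\mathcal{W}$ whose associated planar Lefschetz fibration---produced by the method of \cite{ps}---is equivalent to the fibration $W \to D^2$ of \cite{bo}. The crucial observation is that an unbraided wiring diagram is exactly a braided wiring diagram all of whose braids are the identity, so $\mathcal{W}$ is a legitimate input for \cite[Proposition 5.5]{ps}.

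Next I would apply \cite[Proposition 5.5]{ps} to $\mathcal{W}$: viewing $\mathcal{W}$ as a collection of intersecting curves in $\bfr \times \bfc$, this proposition extends it to a collection of symplectic graphical disks $\G_1, \ldots, \G_n$ in $\bfc \times \bfc = \bfc^2$ and designates marked points $p_1, \ldots, p_m$ containing every intersection point of the disks. I would then run the blow-up and proper-transform procedure recalled in the introduction and in \cite[Proposition 5.8]{ps}: removing $\widetilde{\G}_1, \ldots, \widetilde{\G}_n$ from the blow-up of $\bfc^2$ at $p_1, \ldots, p_m$ yields a Stein filling carrying a planar Lefschetz fibration. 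The content here is only bookkeeping---because the fibration read off from $\mathcal{W}$ coincides, by Theorem~\ref{thm: main}, with the one of \cite{bo}, and because a positive factorization determines its Lefschetz fibration and hence the total space as a Stein filling, the recovered filling is $W$ together with the fibration named in Theorem~\ref{thm: main}.

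For the final assertion I would appeal once more to \cite[Proposition 5.8]{ps}, which asserts precisely that the symplectic disk arrangement built from a wiring diagram in this class is connected to the de Jong--van Straten decorated germ $\mathcal{C}=C_1\cup\cdots\cup C_n$ by a smooth graphical homotopy. Since $\G_1, \ldots, \G_n$ arise as an instance of that general construction, the homotopy is inherited, and the corollary follows.

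The main obstacle is a verification rather than a new idea: one must check that $\mathcal{W}$ satisfies the admissibility hypotheses under which \cite[Proposition 5.5]{ps} and \cite[Proposition 5.8]{ps} are stated, namely that passing to the trivial-braid degeneration does not violate the transversality and genericity conditions on the crossings that the disk extension requires. I expect this to reduce to confirming that the crossings of our diagram are honest transverse double points compatible with the marked-point requirement; granting this, the corollary is a formal consequence, the substantive work having already been carried out in the proof of Theorem~\ref{thm: main}.
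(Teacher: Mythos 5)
Your proposal is correct and follows essentially the same route as the paper: invoke Theorem~\ref{thm: main} to produce the unbraided wiring diagram, feed it into \cite[Proposition 5.5]{ps} to get the symplectic graphical disk arrangement, identify the resulting Lefschetz fibration with the one from \cite{bo}, and cite \cite[Proposition 5.8]{ps} for the graphical homotopy to the decorated germ $\mathcal C$. The only cosmetic difference is that where you argue informally that a positive factorization determines the fibration and hence the filling, the paper pins this down by citing \cite[Proposition 5.6]{ps} and the discussion in \cite[Section 5.4]{ps} (together with restricting to a Milnor ball to obtain compact fibers), which is exactly the verification you flag as the remaining obstacle.
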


 For each Stein filling $W_{p,q}(\textbf{n})$ of  $(L(p,q), \xi_{can})$, the symplectic graphical disk arrangement $\G:= \G_1 \cup  \cdots \cup \G_n $ with marked points $p_1, \ldots, p_m \subset \G$ in $\bfc^2$ described in Corollary~\ref{cor: arrangement} determines immediately the $m \times n$ {\em  incidence matrix} $\mathcal{I}(\G, \{p_j\})$, defined so that its $(i,j)$-entry is $1$ if $p_j \in \G_i$, and $0$ otherwise. Since there is no canonical labelling of the points $p_j$, in general, the incidence matrix is only defined up to permutation of the columns.

\begin{Cor} \label{cor: incidence}  For each Stein filling $W_{p,q}(\textbf{n})$ of  $(L(p,q), \xi_{can})$, there is an iterative  algorithm to obtain the incidence matrix  $\mathcal{I}(\G, \{p_j\})$ for the symplectic graphical disk arrangement $\G:= \G_1 \cup  \cdots \cup \G_n $ with marked points $p_1, \ldots, p_m \subset \G$ in $\bfc^2$ described in Corollary~\ref{cor: arrangement}. \end{Cor}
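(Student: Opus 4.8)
The plan is to read the incidence matrix directly off the ordered set of vanishing cycles, using the dictionary that underlies the Plamenevskaya--Starkston construction. In that dictionary each wire of the diagram corresponds to one disk $\G_i$, equivalently to one hole of the planar fiber of the Lefschetz fibration, and hence to a row of $\mathcal{I}(\G,\{p_j\})$; each crossing of the wiring diagram is promoted to a marked point $p_j$ and carries a vanishing cycle $\g_j$ that encircles exactly the holes whose wires meet at that crossing. Under this correspondence the $(i,j)$-entry of $\mathcal{I}(\G,\{p_j\})$, which is $1$ precisely when $p_j \in \G_i$, equals $1$ precisely when the hole labelled $i$ is enclosed by $\g_j$. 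Thus the incidence matrix is nothing but the hole-membership matrix of the vanishing cycles, and computing it reduces to recording, for each vanishing cycle, the subset of holes it surrounds.

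First I would verify that in the unbraided case the marked points are exactly the crossings of the wiring diagram, so that the extension to graphical disks in Corollary~\ref{cor: arrangement} introduces no auxiliary marked points: since the disks $\G_i$ are graphical extensions of the unbraided (hence transversally crossing) wires, the construction of \cite[Proposition 5.5]{ps} produces one transverse intersection of the disks over each crossing of the corresponding wires and no others. Next, at each such crossing I would match the set of wires meeting there with the set of holes enclosed by the associated vanishing cycle; because the diagram is unbraided the wires run straight between crossings, so this set is read off combinatorially from the picture with no braid monodromy to unwind. I would then organize the reading-off as an iteration that follows the ordered, recursive description of the vanishing cycles produced in \cite{bo} and in the algorithm of Theorem~\ref{thm: main}: at each step one appends to $\mathcal{I}(\G,\{p_j\})$ a new column, namely the indicator vector of the holes enclosed by the next vanishing cycle, terminating once all vanishing cycles have been processed and yielding the matrix up to the permutation of columns that reflects the non-canonical labelling of the $p_j$.

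The main obstacle is the faithfulness asserted in the first step: one must check, within the Plamenevskaya--Starkston extension (Propositions~5.5 and 5.8 of \cite{ps}), that the graphical disk arrangement carries no marked points beyond the crossings of the unbraided wiring diagram and that the incidence of disks at each such point reproduces exactly the encircling data of the corresponding vanishing cycle. Once this is in hand the algorithm is a direct transcription of the vanishing-cycle data of \cite{bo}, and its iterative form is inherited from the iterative construction of those vanishing cycles.
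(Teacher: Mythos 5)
Your core dictionary is sound and is essentially how the paper itself proceeds: the $(i,j)$-entry of $\mathcal{I}(\G,\{p_j\})$ records whether the wire $w_i$ passes through the marked point $p_j$, and by the Plamenevskaya--Starkston construction this is the same as asking whether the hole corresponding to $\G_i$ is enclosed by the vanishing cycle attached to $p_j$. Your column-by-column transcription of the vanishing-cycle data of \cite{bo} produces exactly the columns the paper produces: an interior blowup at the $i$th term gives the column $[1,\ldots,1,0,1,0,\ldots,0]^T$ (the indicator of the holes $H_1,\ldots,H_i,H_{i+2}$ enclosed by the corresponding $\a$-curve), an exterior blowup gives a column with a single $1$ in the last row, and each point $y_s$ gives the indicator of the top $s$ holes enclosed by $\g_s$ (the mirror images appearing in Proposition~\ref{prop: mirror} are harmless here, since the reflection fixes each hole and so does not change hole-membership). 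The only organizational difference is that the paper runs its induction over the blowup sequence, inserting a row \emph{and} a column at each blowup (``blowing up the incidence matrix'') and appending the $y$-columns dictated by $\textbf{m}=\textbf{b}-\textbf{n}$ afterwards, whereas you keep the full set of rows fixed and append columns following the ordered list of vanishing cycles; both give the same matrix up to permutation of columns.

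There is, however, one concrete error in your first verification step: it is \emph{not} true that the marked points are exactly the crossings of the wiring diagram. The diagrams of Theorem~\ref{thm: main} necessarily carry \emph{free} marked points lying on a single wire: the initial blowup places the free point $x_1$ on $w_2$, every exterior blowup places a free point on the new bottom wire, and the twisting algorithm places $m_1$ free points $y_1$ on $w_1$. These are genuine marked points of the arrangement of Corollary~\ref{cor: arrangement} (the paper explicitly allows ``free marked points'' in that proof; they are among the points blown up), and each contributes a column with exactly one $1$ --- see the columns $x_1,\ldots,x_4,y_1$ of the matrix $\mathcal{I}_{Artin}$ in the paper, all of which would be missing if only crossings were processed. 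The statement you should verify instead is that the marked points of the disk arrangement coincide with the marked points of the wiring diagram, crossings and free points alike. With that correction the rest of your argument goes through: indeed, because your iteration runs over vanishing cycles rather than over crossings, the free points are automatically covered, a free point's vanishing cycle being the boundary-parallel curve around the single hole of its wire, as in Definition~\ref{def: convex}.
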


As a matter of fact, one can read off  the incidence matrix  directly from the wiring diagram from which the symplectic disk configuration arises. As explained in \cite[Section 6]{ps} and \cite[Section 5]{djvs}, the incidence matrix  $\mathcal{I}(\G, \{p_j\})$ determines the fundamental group, the integral homology and the intersection form of $W_{p,q}(\textbf{n})$, as well as the first Chern class of the Stein structure on $W_{p,q}(\textbf{n})$.  It is worth mentioning that the incidence matrices can be used to distinguish the diffeomorphism types of the fillings (see \cite{npp}) as an application of the algorithm in Corollary~\ref{cor: incidence}.

Note that each Milnor fibre of the cyclic quotient singularity is a Stein filling of its boundary---which is the link $L(p,q)$ of the singularity, equipped with its canonical contact structure
$\xi_{can}$.  In \cite{npp},   N\'{e}methi and Popescu-Pampu showed that  there is  an explicit  one-to-one correspondence between Stein fillings of $(L(p,q), \xi_{can})$ and Milnor fibres of the corresponding cyclic quotient singularity, proving in particular a conjecture of Lisca \cite{l}.
As another application of Theorem~\ref{thm: main}, we obtain an alternate proof of their result formulated as Corollary~\ref{cor: equiv}.
We say that two (smooth) disk arrangements $(\G, \{p_j\})$ and $(\G', \{p'_j\})$  in $\mathbb{C}^2$ are {\em combinatorially equivalent} if their incidence matrices coincide up to permutation of columns, i.e. up to relabelling of the marked points.

\begin{Cor} \label{cor: equiv}  For each Stein filling $W_{p,q}(\textbf{n})$ of  $(L(p,q), \xi_{can})$, the arrangement of symplectic graphical disks $(\G, \{p_j\})$, described in  Corollary~\ref{cor: arrangement},  is combinatorially equivalent  to the arrangement of the smooth branches of a picture  deformation  of the decorated plane curve  germ $\mathcal C$ representing the corresponding cyclic quotient surface singularity, described by de Jong and van Straten \cite{djvs}.  This equivalence gives an explicit  bijection between Stein fillings of $(L(p,q), \xi_{can})$ and Milnor fibres of the corresponding cyclic quotient singularity. \end{Cor}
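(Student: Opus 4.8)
The plan is to deduce Corollary~\ref{cor: equiv} by comparing, at the combinatorial level, the two constructions that attach to a cyclic quotient singularity a family of disk arrangements: on the symplectic side, the arrangement $(\G,\{p_j\})$ furnished by Corollary~\ref{cor: arrangement}, and on the singularity-theoretic side, the arrangement of smooth branches of a picture deformation of the decorated germ $\mathcal C$ due to de Jong and van Straten \cite{djvs}. The invariant mediating the comparison is the incidence matrix $\mathcal{I}(\G,\{p_j\})$ of Corollary~\ref{cor: incidence}, so I would organize the whole argument around the claim that this matrix is unchanged, up to permutation of columns, under the smooth graphical homotopy provided by Corollary~\ref{cor: arrangement}.

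First I would establish the combinatorial equivalence asserted in the first sentence. By Corollary~\ref{cor: arrangement} the disks $\G_1,\dots,\G_n$ are joined to the germ $\mathcal C=C_1\cup\cdots\cup C_n$ by a smooth graphical homotopy; since such a homotopy keeps each sheet a graph over the base and moves the marked points continuously without creating or destroying incidences other than those prescribed, the pattern recording which $\G_i$ passes through which $p_j$ is preserved throughout. Reading off the endpoints of this homotopy realizes $(\G,\{p_j\})$ as a fiber of a one-parameter deformation of $\mathcal C$ whose branches are smooth and generically transverse. I would then check that this deformation satisfies the decoration and multiplicity conditions that de Jong and van Straten impose on an admissible picture deformation, so that $(\G,\{p_j\})$ is genuinely the branch arrangement of a picture deformation of $\mathcal C$; the combinatorial equivalence is then immediate, the two arrangements sharing a common incidence matrix up to relabelling of the marked points.

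With the combinatorial equivalence in hand, I would produce the bijection by running two reconstruction procedures on the same combinatorial data. On one hand, \cite[Proposition 5.8]{ps} recovers the Stein filling $W$, together with the Lefschetz fibration of Theorem~\ref{thm: main}, by blowing up $\bfc^2$ at the marked points and deleting the proper transforms $\widetilde{\G}_1,\dots,\widetilde{\G}_n$; this output depends only on the combinatorial type of $(\G,\{p_j\})$, since combinatorially equivalent graphical arrangements are smoothly graphically homotopic and the homotopy extends over the blowup. On the other hand, the de Jong--van Straten correspondence assigns to the combinatorial type of a picture deformation a smoothing of the cyclic quotient singularity, and hence a Milnor fiber whose diffeomorphism type likewise depends only on that combinatorial type. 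Because the two arrangements attached to $W$ are combinatorially equivalent by the first step, $W$ is diffeomorphic to the corresponding Milnor fiber, and the assignment $W\mapsto \mathcal{I}(\G,\{p_j\})\mapsto \text{Milnor fiber}$ is the desired map.

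It remains to see that this map is a bijection. Injectivity follows from the same dependence on combinatorial type alone: two Stein fillings with the same incidence matrix would reconstruct, via \cite[Proposition 5.8]{ps}, to diffeomorphic $4$-manifolds, so distinct fillings in Lisca's classification \cite{l} must carry distinct combinatorial types. For surjectivity I would use that the construction of \cite{bo} realizes every Stein filling of $(L(p,q),\xi_{can})$, so the list of incidence matrices produced by Corollary~\ref{cor: incidence} is exhaustive. I expect the main obstacle to be precisely the matching of the two enumerations---proving that this list coincides with the set of combinatorial types of admissible picture deformations of $\mathcal C$ catalogued by de Jong and van Straten. Here the unbraided feature supplied by Theorem~\ref{thm: main} should carry the weight: because every braid in the wiring diagram can be taken to be the identity, the incidence matrix can be read off directly from the diagram and organized according to the continued-fraction expansion of $p/q$, which is exactly the combinatorial scaffolding underlying the de Jong--van Straten list. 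Verifying that these two recipes agree term by term, so that the correspondence is a genuine bijection and not merely a surjection, is the technical heart of the argument.
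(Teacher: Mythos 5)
Your overall scaffolding---mediating the comparison through incidence matrices, reconstructing the Stein filling via \cite[Proposition 5.8]{ps} on one side and the Milnor fiber via de Jong--van Straten on the other---matches the paper's strategy in outline, but two of your steps do not hold up, and the second is exactly where the paper's actual content lies. First, your derivation of the combinatorial equivalence from the smooth graphical homotopy is flawed. The homotopy of Corollary~\ref{cor: arrangement} connects the arrangement $(\G,\{p_j\})$ to the \emph{germ} $\mathcal C$, in which all branches pass through the single point $0$; incidence data is certainly not preserved along it, and, more seriously, a smooth graphical homotopy is far weaker than an analytic ($\delta$-constant) deformation. A picture deformation in the sense of \cite{djvs} is an analytic object, and the ``unexpected fillings'' phenomenon of Plamenevskaya--Starkston shows that for general rational singularities with reduced fundamental cycle an arrangement arising from a Stein filling need \emph{not} be combinatorially equivalent to any picture deformation. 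So ``checking the decoration and multiplicity conditions'' is not a routine verification appended to the homotopy: it is the whole theorem, and its truth is special to cyclic quotient singularities.

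Second, you explicitly defer the ``matching of the two enumerations'' as the technical heart and only express the expectation that it works; this is precisely the gap the paper closes, using a combinatorial device you never invoke, namely the CQS matrices of \cite[Definition 6.8]{djvs} (matrices with rows $v_i$ having entries in $\{0,1\}$ and satisfying $\langle v_i,v_j\rangle=\langle v_i,v_i\rangle-1$ for $i<j$). By \cite[Theorem 6.18]{djvs}, the incidence matrices of picture deformations of the decorated germ are, up to column permutation, exactly the CQS matrices, and by \cite[Lemma 6.11]{djvs} and \cite[Proposition 6.12]{djvs} the CQS matrices are in bijection with the pairs $(\textbf{n},\textbf{m})$. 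The paper then checks that the row-reversed matrix $\mathcal{I}^R_{\textbf{n}}(\textbf{m})$ produced by the wiring-diagram algorithm of Corollary~\ref{cor: incidence} is a CQS matrix, that every CQS matrix arises this way, and that conversely one can read off $(\textbf{n},\textbf{m})$ from any CQS matrix and rebuild the wiring diagram; both the combinatorial equivalence in the first sentence of the corollary and the explicit two-way bijection fall out of this identification. Without this characterization (or an equivalent criterion for which incidence matrices come from picture deformations), your argument establishes neither the combinatorial equivalence nor the surjectivity of the proposed map, and your injectivity argument also inherits the unproven assumption that the reconstruction outputs depend only on combinatorial type.
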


In other words, the wiring diagrams we obtain via Theorem~\ref{thm: main} can be viewed as picture deformations of the decorated plane curve germ representing the associated cyclic quotient singularity. In Figure~\ref{fig: steintomilnor} below, we summarized the correspondence that takes each Stein filling of $(L(p,q), \xi_{can})$ given by Lisca to the Milnor fibre of the associated cyclic quotient singularity described by de Jong and van Straten via a picture deformation of the decorated plane curve representing the singularity.

\begin{figure}[ht]  \relabelbox \small {\epsfxsize=4.5in
\centerline{\epsfbox{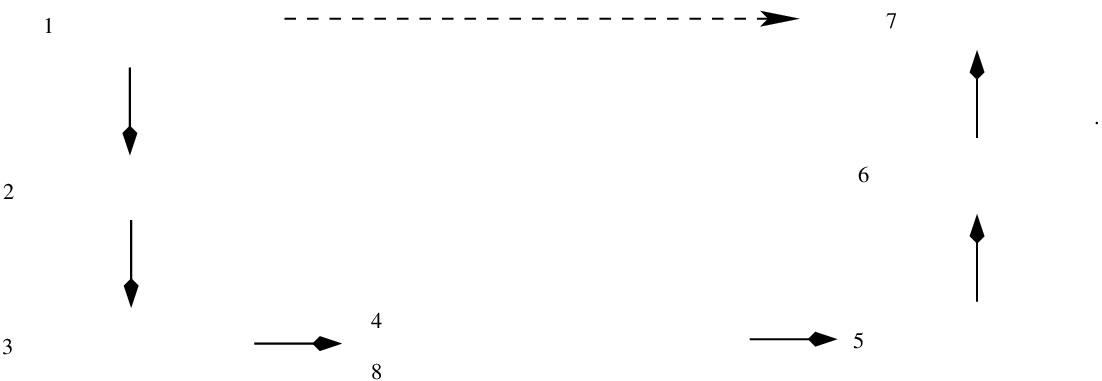}}}
\relabel{1}{Stein filling }  \relabel{2}{Lefschetz fibration} \relabel{3}{Wiring diagram}
\relabel{4}{Configuration of symp.} \relabel{8}{graphical disks in $\mathbb{C}^2$} \relabel{5}{Incidence matrix} \relabel{6}{picture deformation} \relabel{7}{Milnor fibre}
 \endrelabelbox
\caption{From Lisca to de Jong and van Straten  }\label{fig: steintomilnor} \end{figure}

It is well-known that for any contact singularity link $(L(p,q), \xi_{can})$, the Milnor fibre of the Artin smoothing component of the corresponding cyclic quotient singularity is diffeomorphic to the  Stein filling $W_{(p, q)}((1,2,\ldots,2,1))$,   which is Stein deformation equivalent to the one obtained by deforming the symplectic structure on the minimal resolution (see \cite{bd}) of the singularity. In addition, according to \cite{djvs}, there is a canonical picture deformation, called the {\em Scott deformation},  of the decorated plane curve germ which corresponds to the Artin smoothing. As a final application of Theorem~\ref{thm: main}, we  obtain a wiring diagram that represents the combinatorial equivalence class of the Scott deformation---which illustrates the correspondence in Figure~\ref{fig: steintomilnor} between the Stein filling $W_{(p, q)}((1,2,\ldots,2,1))$ and the Milnor fibre of the Artin smoothing component. 

\begin{Cor} \label{cor: scott} For the Stein filling  $W_{(p, q)}((1,2,\ldots,2,1))$ of  $(L(p,q), \xi_{can})$, the arrangement of symplectic disks given in  Corollary~\ref{cor: arrangement}, arising from the wiring diagram described in Theorem~\ref{thm: main},  is combinatorially equivalent  to the Scott deformation of a decorated plane curve representing the singularity.
\end{Cor}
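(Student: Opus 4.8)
The statement reduces to a comparison of combinatorial data. Since two arrangements are combinatorially equivalent precisely when their incidence matrices agree up to permutation of columns, and since Corollary~\ref{cor: incidence} produces the incidence matrix $\mathcal{I}(\G,\{p_j\})$ of the arrangement attached to any Stein filling, the claim amounts to showing that, for the single filling representing the Artin smoothing, this incidence matrix coincides after relabelling of marked points with the incidence matrix of the Scott deformation of $\mathcal C$. Note that Corollary~\ref{cor: equiv} already guarantees that the arrangement of this filling is combinatorially equivalent to \emph{some} picture deformation of $\mathcal C$; the genuine content of the corollary is to identify that picture deformation as the Scott one, so a direct computation of both incidence matrices is the natural route.

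First I would pin down the distinguished filling $W_{\mathrm{Art}}$ inside the explicit parametrization underlying Theorem~\ref{thm: main}. By \cite{bd} the Milnor fiber of the Artin component is Stein deformation equivalent to the symplectic deformation of the minimal resolution, and the minimal resolution of $L(p,q)$ is the linear plumbing prescribed by the Hirzebruch--Jung continued fraction $[b_1,\dots,b_k]$ with all $b_i\ge 2$. Among Lisca's fillings this is the distinguished one whose second homology realizes the negative-definite linear chain $[b_1,\dots,b_k]$; in the vanishing-cycle description of \cite{bo} it is the specific ordered configuration whose associated intersection form is this chain, and in fact it is the filling carrying the largest number of vanishing cycles, namely $k=b_2(W_{\mathrm{Art}})$. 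Feeding this configuration into the algorithm of Theorem~\ref{thm: main} yields an explicit unbraided wiring diagram, from which Corollary~\ref{cor: incidence} produces $\mathcal{I}(\G,\{p_j\})$; because the underlying data here is just the continued fraction, the number $n$ of disks and the number $m$ of marked points, together with each incidence, are read off directly from $[b_1,\dots,b_k]$.

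Next I would recall the combinatorial structure of the Scott deformation from \cite{djvs}. The Scott deformation is the canonical picture deformation of the decorated germ $\mathcal C=C_1\cup\cdots\cup C_n$ singled out as the one corresponding to the Artin smoothing component, and its incidence data---which deformed smooth branches pass through each of its prescribed singular points---is again determined by the decorations, hence ultimately by the same continued-fraction data $[b_1,\dots,b_k]$. With both incidence matrices written in terms of this common data, the proof is completed by exhibiting a bijection between the marked points $\{p_j\}$ of the arrangement of $W_{\mathrm{Art}}$ and the singular points of the Scott deformation that carries one incidence pattern to the other, i.e.\ that realizes the required permutation of columns.

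The main obstacle will be exactly this last matching. Both the algorithm of Theorem~\ref{thm: main} and the de Jong--van Straten construction generate their marked points through iterative procedures with no canonical labelling, so establishing the column permutation requires carefully tracking how each crossing of the unbraided wiring diagram for $W_{\mathrm{Art}}$ corresponds to a node of the Scott deformation. I expect the cleanest route is an induction on the length $k$ of the continued fraction (equivalently on the number of branches $n$), verifying at each step that the crossing pattern dictated by the algorithm reproduces the incidence increments of the Scott deformation. As a consistency check that sharply constrains the matching, one may invoke the fact recorded after Corollary~\ref{cor: incidence} that $\mathcal{I}(\G,\{p_j\})$ determines the intersection form of $W_{\mathrm{Art}}$, which must equal the minimal-resolution chain $[b_1,\dots,b_k]$; this fixes the row sums and much of the combinatorics, leaving only the finer incidence structure to be matched by hand.
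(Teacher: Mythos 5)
Your high-level reduction---that combinatorial equivalence is a statement about incidence matrices, and that Corollary~\ref{cor: equiv} already gives equivalence to \emph{some} picture deformation so the content is identifying it as the Scott one---matches the paper's framing. But the proposal stops at exactly the point where the work lies, and its setup contains errors that would derail the computation. First, the identification of the Artin filling is off: the Milnor fiber of the Artin component is diffeomorphic to the minimal resolution, whose intersection form is the negative-definite chain $[a_1,\ldots,a_m]$ given by $\frac{p}{q}=[a_1,\ldots,a_m]$, \emph{not} the dual chain $[b_1,\ldots,b_k]$ of $\frac{p}{p-q}$ as you assert (and $k\neq b_2(W_{\mathrm{Art}})$ in general). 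What the argument actually needs, and what you never produce, is the concrete Lisca parameter: the Artin filling is $W_{(p,q)}((1,2,\ldots,2,1))$, arising from the all-exterior blowup sequence, which is precisely what makes its wiring diagram (parallel wires, each with a free point, followed by prescribed full twists) completely explicit.

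Second, your induction parameter is wrong, and the mechanism for controlling the Scott side is missing. Inducting ``on the length $k$ of the continued fraction (equivalently on the number of branches $n$)'' cannot work as stated: passing from $\frac{p}{p-q}=[b_1,\ldots,b_s]$ to $[b_1,\ldots,b_s+1]$ changes the filling, the wiring diagram, and the Scott deformation while keeping $k=s$ and $n$ fixed, so such steps are invisible to your induction. The paper instead inducts on the two elementary operations $[a_1,\ldots,a_r]\mapsto[a_1,\ldots,a_r+1]$ and $[a_1,\ldots,a_r]\mapsto[a_1,\ldots,a_r,2]$, which generate every continued fraction from $[2]$; by Riemenschneider duality these act on the dual string by appending a $2$ or incrementing the last entry, hence on the wiring diagram by inserting one new wire with one new marked point (which upgrades the final full twist $y_s$ to $y_{s+1}$), or by adding one more simultaneous intersection point $y_s$ of all wires. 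Crucially, the same two operations change the decorated germ $(\mathcal C,l)$ of \cite{npp} and \cite{djvs} in an equally simple way---one new curvetta meeting every old one with multiplicity $1$ and $l_{s+1}=2$, respectively all pairwise intersection orders and all $l_i$ increased by $1$---and it is this parallel bookkeeping that allows the Scott deformation and the disk arrangement to be compared one step at a time. Your proposal has no substitute for this: asserting that the Scott deformation's incidence data ``is determined by the continued-fraction data'' is true but gives no way to compute it, and you explicitly defer the column matching as ``the main obstacle.'' That matching is the entire content of the corollary, so as it stands the proposal is a plan rather than a proof.
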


\section{Rational singularities with reduced fundamental cycle, picture deformations, and  braided wiring diagrams} \label{sec: bwd}

In \cite{npp}, N\'{e}methi and Popescu-Pampu showed that there is an
explicit bijective correspondence between Stein fillings of the link of a cyclic quotient singularity and Milnor fibres of smoothing components
of the given singularity, as conjectured by Lisca \cite{l}. As cyclic quotient singularities are examples of sandwiched singularites, de Jong and van Straten's picture deformation construction (see \cite{djvs}) can be used to describe these Milnor fibres. We give a brief description of the construction of de Jong and van Straten here.
As the theory is easier to describe in the case of rational singularities with reduced fundamental cycle, a class which
contains cyclic quotient singularities, we will restrict attention to these.

Let $(X,x)$ be the germ of a rational singularity with reduced fundamental cycle.
De Jong and van Straten associate to $(X,x)$ a, possibly nonunique, decorated germ of a reduced plane curve singularity $\mathcal C=C_1\cup\cdots\cup C_n\subset (\bfc^2,0)$ with \emph{smooth} irreducible branches. Each such singularity can be resolved by a finite sequence of blowups.
For each branch $C_i$, let $m_i$ denote the number of times $C_i$, or its proper transform,
is blown up in the minimal resolution. For example, if $\mathcal C$ consists of a collection of curves intersecting
(pairwise) transversally at $0$, then $m_i=1$ for all $i$.
The decoration on $\mathcal C$ consists of an $n$-tuple $l=(l_1,\dots,l_n)$ of positive integers such that $l_i\geq m_i$
for each $i$. Given such
a decorated curve $(\mathcal C,l)$, one can recover the corresponding surface singularity as follows: Take the minimal
embedded resolution of $\mathcal C$ and iteratively blow up the proper transform of $C_i$ $(l_i-m_i)$-times on the preimage
of $0$ for each $i$. Under the condition that $l_i>m_i$ for each $i$, the set of exceptional curves that do not meet
the proper transform of $\mathcal C$ will be connected. Collapsing them then gives the corresponding surface singularity.

Given a decorated curve $(\mathcal C,l)$, let $\tilde{\mathcal C}=\tilde C_1\cup\cdots\cup\tilde C_n$ denote the normalization of $\mathcal C$ (which in our present situation is just the disjoint union of the irreducible components of $\mathcal C$).
Geometrically, one may think of the decoration $l$ as a collection of $l_i$ marked points on $\tilde C_i$, for each $i$, all concentrated on
the preimage of the singular point.
The outcome of de Jong and van Straten's construction is that there is a one-to-one correspondence between one-parameter deformations of $(X,x)$ and ``picture deformations'' of $(\mathcal C,l)$. Roughly speaking, a \emph{picture deformation} of $(\mathcal C,l)$
consists of a $\delta$-constant deformation $\mathcal C^s=C_1^s\cup\cdots\cup C_n^s$ of $\mathcal C$, which in the present situation means that the branches of $\mathcal C$ are deformed separately
and not allowed to merge, together with a redistribution $l^s$ of the marked points so that we have exactly $l_i$ marked points on $\tilde C^s_i$ for each $i$, where $\tilde C^s_1,\ldots,\tilde C^s_n$ denote the irreducible components of the normalization $\tilde{\mathcal C}^s$ of $\mathcal C^s$. Here $\mathcal C^0=\mathcal C$ and we require that for $s\neq 0$ the only singularities of $\mathcal C^s$ are ordinary
$k$-tuple points, for various $k$, that is, transversal intersections of $k$ smooth branches, and that each such multiple point
is marked. There may be additional ``free'' marked points on the branches of $\mathcal C^s$. The Milnor fibre of the smoothing
associated to $(\mathcal C^s,l^s)$  can then be constructed by blowing up all the marked points, taking the complement
of the proper transforms of $C_1^s,\ldots,C_n^s$ and smoothing corners. Here the Milnor fibre will be noncompact, but by
working in a small ball centered at the origin in $\bfc^2$ we can obtain compact Milnor fibres.

The topological information from picture deformations can be conveniently extracted  by using the notion of braided wiring diagrams. These were introduced by
Cohen and Suciu \cite{cs} in their study of complex hyperplane arrangements and have been used fruitfully by Plamenevskaya and Starkston \cite{ps} in
their investigation of unexpected Stein fillings in the case of rational surface singularities with
reduced fundamental cycle. We briefly describe these next.

A \emph{braided wiring diagram} is a collection of curves $f_i\colon [0,1]\to\bfr\times\bfc$ for $1\leq i\leq n$, called \emph{wires},
such that $f_i(t)\in\{t\}\times\bfc$. At finitely many interior points $t_1,\ldots,t_m$, a subcollection of the wires may intersect with the
remaining being disjoint, but at each such point
the wires intersecting are assumed to have distinct tangent lines. We will make the further assumption that there is
a number $\varepsilon> 0$ such that the positions of the wires above the points $0,1$ and $t_i\pm\varepsilon$ take the same given values
in $\bfr\subset\bfc$ and the restriction of each wire $f_j$ to $(t_i-\varepsilon,t_i+\varepsilon)$ is linear. Any
braided wiring diagram can be made to satisfy this assumption by a homotopy of braided wiring diagrams.
Then the portions of the braided wiring diagram between $t_i-\varepsilon$ and $t_i+\varepsilon$ can be
specified by declaring which adjacent wires intersect and on the complementary intervals the wires may be braided.  Moreover, any wiring diagram will be presented by its projection onto $\bfr \times \bfr \subset \bfr \times \bfc$, where the second $\bfr$ is the real part of $\bfc$.

We now describe how to obtain a braided wiring diagram from a picture deformation $(\mathcal C^s,l^s)$. By choosing
coordinates of $\bfc^2$ generically, we may assume that each $C_i^0$ is graphical, that is,
$C_i^0=\{(x,y)\in\bfc^2\,|\,x\in D, y=f_i(x)\}$ for some complex function $f_i$, where $D$ is a small disk in $\bfc$
centered at $0$. For $s>0$ sufficiently small, it follows that each $C_i^s$ is graphical. Let $\eta_1,\ldots,\eta_m$
denote the images of the intersection points of $C_1^s,\ldots,C_n^s$ under the map $\pi_x \colon\bfc^2\to\bfc$
given by projecting onto the first coordinate and choose a smooth curve $\gamma\colon [0,1]\to D$ whose interior passes through these points such that $\gamma^\prime(t)$ has
nonpositive real part for all $t$. Then $(C_1^s\cup\cdots\cup C_n^s)\cap\pi_x^{-1}(\gamma)$ is a braided wiring diagram.

Next we review how Plamenevskaya and Starkston constructed planar Lefschetz fibrations based on a configuration of smooth disks in $\bfc^2$; see \cite[Lemma 3.2]{ps}. Let $\G_1,  \ldots, \G_n$ be smooth disks in $\bfc^2$ which are graphical with respect to the projection $\pi_{x}$.   Assume that whenever two or more of these disks meet at a point, they intersect transversally and positively   with respect
to the orientation on the graph $\G_i$ induced from the natural orientation on $\bfc$. Let $p_1, \ldots, p_m$  be the marked points
on $\bigcup_i \G_i$ which include all the intersection points, and let $ \Pi\colon \bfc^2 \# m \cpb
 \to \bfc^2 $  be the blow-up at the
points $p_1, \ldots, p_m$. If $\widetilde{\G}_1,  \ldots, \widetilde{\G}_n$
denote the proper transforms of $\G_1,  \ldots, \G_n$,
then $$\pi_{x} \circ \Pi\colon \bfc^2 \# m \cpb \setminus  (\widetilde{\G}_1\cup  \cdots\cup \widetilde{\G}_n) \to \bfc$$ is a {\em Lefschetz fibration} whose regular fibres are
punctured planes, where each puncture corresponds to a component $\widetilde{\G}_i$. There is one vanishing cycle for
each point $p_j$, which is a curve in the fibre enclosing the punctures that correspond to the components $\G_i$
passing through $p_j$.

Moreover, restricting to an appropriate  Milnor ball in $\bfc^2$ that contains all the points $p_1, \ldots, p_m$ one obtains a Lefschetz fibration whose fibre is a disk with holes, where the holes correspond
to the components $\G_i$ and the vanishing cycles correspond to the points $p_j$ in the same way as described above.
Furthermore, if the {\em curvettas} $C_1^s,\ldots,C_n^s$
with marked points are the result of a picture deformation of a germ associated to a surface singularity, the Lefschetz fibration constructed as above is compatible
with the complex structure on the Milnor fibre of the corresponding smoothing.

\subsection{From wiring diagrams to planar Lefschetz fibrations} \label{subsec: wiringtoPLF} Here we outline the method of Plamenevskaya and Starkston that gives a set of ordered vanishing cycles associated to any braided wiring diagram, which in turn determines a planar Lefschetz fibration on the associated Stein fillings; see \cite[Section 5.2]{ps}.
In this paper, we will only deal with wiring diagrams without any braids and we will call them {\em unbraided} wiring diagrams. In the following, we describe their method for the case of unbraided wiring diagrams. {\em We should emphasize that our conventions will be different from those of \cite{ps}, for the purposes of this paper.} We denote the  marked points (consisting of intersection points and free points) by $x_i$, and enumerate them according to their geometric position from right to left, as illustrated in Figure~\ref{fig: wire}.

\begin{figure}[ht]  \relabelbox \small {\epsfxsize=2in
\centerline{\epsfbox{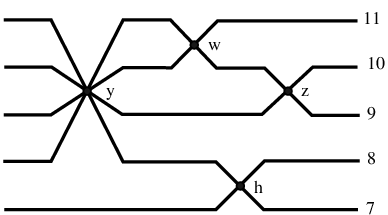}}}
 \relabel{z}{$x_1$}  \relabel{w}{$x_3$} \relabel{y}{$x_4$}  \relabel{h}{$x_2$} \endrelabelbox
\caption{An example of an {\em unbraided} wiring diagram without any free points.} \label{fig: wire} \end{figure}

For any positive integer $r$, let $D_r$ denote the disk with $r$ holes. Throughout this section we assume that the holes are aligned vertically in $D_r$.  For each marked point $x_s$ in a given wiring diagram with $k$ wires,  there is a convex curve $\d (x_s)$ in $D_k$ enclosing a certain set of adjacent holes, which is determined as follows.

\begin{Def} \label{def: convex} {\em (Convex curve assigned to a marked point)} Suppose that the marked point $x_s$  is a simultaneous intersection point of some geometrically consecutive wires in a given wiring  diagram. The convex curve $\d (x_s)$ encircling the adjacent holes whose geometric order from the top in $D_k$ coincides with the local geometric order of the wires simultaneously intersecting at that marked point is called the convex curve assigned to  $x_s$. If $x_s$ is a free marked point on a single wire, then the convex curve $\d (x_s)$ assigned to $x_s$ is the curve which is parallel to a single interior boundary component of $D_k$ whose order from the top coincides with the local geometric order of the wire. \end{Def}

For example, in Figure~\ref{fig: wire},  the geometrically top four wires intersect at the marked point $x_4$;  the geometrically top two wires intersect at the marked point $x_3$;  the geometrically bottom two wires intersect at the marked point $x_2$ and  the geometrically second and third wires intersect at the marked point $x_1$. It follows that the convex curves $\d (x_4), \d (x_3), \d (x_2), \d (x_1)$ depicted  in Figure~\ref{fig: ex9b} are assigned to the marked points $x_4, x_3, x_2, x_1$, respectively, in Figure~\ref{fig: wire}.

\begin{figure}[ht]  \relabelbox \small {\epsfxsize=1.8in
\centerline{\epsfbox{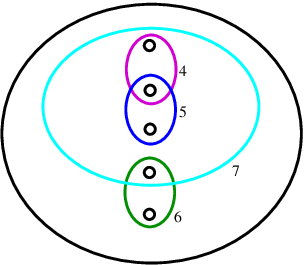}}}
 \relabel{4}{$\d (x_3)$}  \relabel{5}{$\d (x_1)$}  \relabel{6}{$\d (x_2)$} \relabel{7}{$\d (x_4)$}
 \endrelabelbox
\caption{Convex curves in $D_5$ assigned to the marked points in Figure~\ref{fig: wire}.} \label{fig: ex9b} \end{figure}

For each marked point $x_s$ in the wiring diagram,  there is a counterclockwise half-twist $\D (x_s)\colon D_k \to D_k$, which is determined as follows.

\begin{Def} \label{def: twist}  {\em (Counterclockwise half-twist corresponding to a marked point)}  The counterclockwise half-twist $\D (x_s)$ along the subdisk in $D_k$ enclosed by the convex curve $\d (x_s)$  is called the counterclockwise half-twist corresponding to $x_s$. \end{Def}

Suppose that a wiring diagram has $k$ wires and  $r$ marked points $x_r, x_{r-1}, \ldots, x_1$, reading from left to right. According to \cite{ps},  for each $1 \leq s \leq r$, there is a vanishing cycle $V(x_s)$ in $D_k$ associated to the marked point $x_s$, which is determined as follows.

 \begin{Def} \label{def: cycle} {\em (Vanishing cycle associated to a marked point)} For each $2 \leq s \leq r$, the vanishing cycle $V(x_s)$ associated to the marked point $x_s$ is the curve  in $D_k$ given as $$ \D(x_{1}) \circ \cdots \circ  \D(x_{s-1}) (\d(x_s)),$$ and $V(x_1) =\d (x_1)$.   \end{Def}

For example, the vanishing cycles for the marked points in   Figure~\ref{fig: wire} are calculated as follows. The curve  $V(x_4) = \D(x_{1}) \circ \D(x_{2}) \circ  \D(x_{3}) (\d(x_4))$ is illustrated in Figure~\ref{fig: ex7}. Similarly,  $V(x_3) = \D(x_{1}) \circ \D(x_{2})(\d(x_3))$ is illustrated in Figure~\ref{fig: ex8}. Finally, the vanishing cycle $V(x_2)= \D(x_{1})(\d(x_2)) = \d(x_2)$ and $V(x_1)=\d(x_1)$  by definition.

\begin{figure}[ht]  \relabelbox \small {\epsfxsize=5in
\centerline{\epsfbox{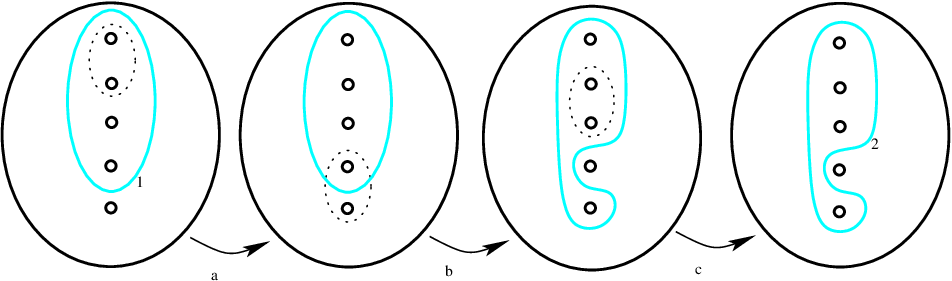}}}  \relabel{1}{$\d(x_4)$} \relabel{2}{$V(x_4)$} \relabel{a}{$\D(x_{3})$} \relabel{b}{$\D(x_{2})$} \relabel{c}{$\D(x_{1})$}
 \endrelabelbox

\caption{Starting from $\d(x_4)$, we apply a counterclockwise half-twist on the subdisk enclosed by the dotted curve, at each step, going from left to right. }\label{fig: ex7} \end{figure}

\begin{figure}[ht]  \relabelbox \small {\epsfxsize=4in
\centerline{\epsfbox{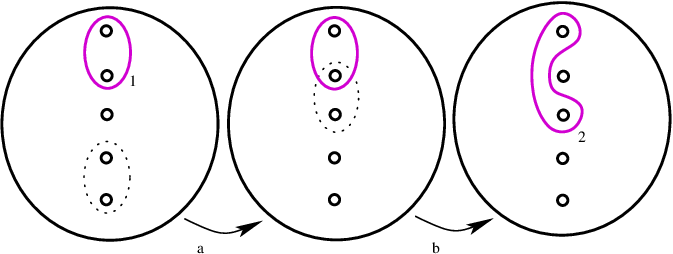}}} \relabel{1}{$\d(x_3)$} \relabel{2}{$V(x_3)$} \relabel{b}{$\D(x_{1})$} \relabel{a}{$\D(x_{2})$}
 \endrelabelbox
\caption{Starting from $\d(x_3)$, we apply a counterclockwise half-twist on the subdisk enclosed by the dotted curve, at each step, going from left to right. }\label{fig: ex8} \end{figure}

\section{Planar Lefschetz fibrations on Stein fillings of lens spaces} \label{sec: palf}

\subsection{Symplectic fillings of lens spaces} \label{sec: lens}

In \cite{l}, Lisca classified the minimal symplectic fillings of the contact $3$-manifold $(L(p,q), \xi_{can})$, up to diffeomorphism. It turns out any minimal symplectic filling of $(L(p,q), \xi_{can})$ is in fact a Stein filling. We first briefly review Lisca's classification \cite{l} of
Stein fillings of $(L(p,q), \xi_{can})$, up to diffeomorphism.

\begin{Def} \label{blowup} {\em (Blowup of a tuple of positive integers)} For any integer $r \geq 2$, a blowup of an $r$-tuple of positive
integers at the $i$th term is a map $\varphi_i\colon \mathbb{Z}^r_+\to
\mathbb{Z}^{r+1}_+$ defined by
\begin{align*}
& (n_1, \ldots, n_{i}, n_{i+1}, \ldots, n_r) \mapsto  (n_1, \ldots,
n_{i-1}, n_{i}+1,1,n_{i+1}+1, n_{i+2}, \ldots,
n_r)
\end{align*}
for any $1 \leq i \leq r-1$ and by
\begin{align*} & (n_1, \ldots,
n_r) \mapsto  (n_1, \ldots, n_{r-1},
n_r+1,1)
\end{align*}
when $i=r$. The case when  $1 \leq i \leq r-1$ is called an interior blowup, whereas the case  $i=r$ is called an exterior blowup. We also say that $(0) \to (1,1)$ is the initial blowup.
\end{Def}

Suppose that $p > q \geq 1$ are coprime integers and let
$$\frac{p}{p-q}=[b_1, b_2, \ldots, b_k]=b_1-
\cfrac{1}{b_2- \cfrac{1}{\ddots- \cfrac{1}{b_k}}}$$ be the Hirzebruch-Jung continued fraction, where $b_i \geq 2$ for $1 \leq i \leq
k$. Note that the sequence of integers $\{ b_1, b_2 \ldots, b_k\}$ is uniquely determined by the pair $(p,q)$.

For any $k \geq 2$, a $k$-tuple of positive integers $(n_1, \ldots, n_k)$ is
called {\em admissible} if each of the denominators in the continued
fraction $[n_1, \ldots, n_k]$ is positive, where we do not assume that $n_i \geq 2$. For any $k \geq 2$, let $\mathcal{Z}_{k} \subset
\mathbb{Z}^k$ denote the set of admissible $k$-tuples of positive
integers $\textbf{n}=(n_1, \ldots, n_k)$ such that $[n_1, \ldots,
n_k] =0$ and let $\mathcal{Z}_{1}=\{(0)\}$.  As a matter of fact,  any $k$-tuple of
positive integers in $\mathcal{Z}_{k}$ can be obtained from $(0)$
by a sequence of blowups as observed by Lisca \cite[Lemma 2]{l}. Note that the only possible blowup of $(0)$ is the initial blowup $(0) \to (1,1)$. Let
$$\mathcal{Z}_{k}(\textstyle{\frac{p}{p-q}}) = \{ (n_1,
\ldots, n_k)\in\mathcal{Z}_{k}\,|\, 0 \leq n_i \leq b_i \;
\mbox{for} \; i=1, \ldots , k\}.$$

Next, for every $k$-tuple $\textbf{n}=(n_1, \ldots, n_k)\in \mathcal{Z}_{k}(\textstyle{\frac{p}{p-q}})$, we describe a $4$-manifold $W_{p,q}(\textbf{n})$ whose boundary is orientation-preserving diffeomorphic to $L(p,q)$. We start with a chain of unknots in $S^3$ with framings $n_1, n_2,
\ldots, n_k$, respectively. It can be easily verified that the result of Dehn
surgery on this framed link, which we denote $N(\textbf{n})$, is diffeomorphic to $S^1 \times S^2$. Let
$\textbf{L}=\bigcup_{i=1}^{k} L_i$ denote the framed link in $N(\textbf{n})$ depicted in red
in Figure~\ref{han}, where each $L_i$ has $b_i -n_i$ components.

\begin{figure}[ht]
  \relabelbox \small {\epsfxsize=4.5in
  \centerline{\epsfbox{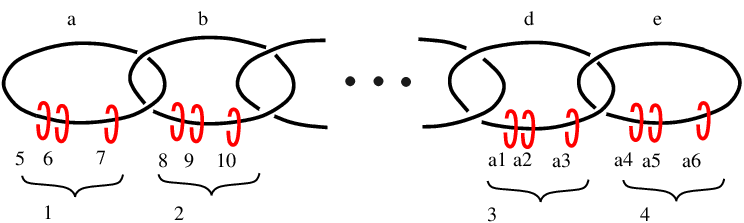}}}

\relabel{a}{$n_1$} \relabel{b}{$n_2$} \relabel{d}{$n_{k-1}$} \relabel{e}{$n_k$} \relabel{1}{$b_1-n_1$}

\relabel{2}{$b_2-n_2$} \relabel{3}{$b_{k-1}-n_{k-1}$} \relabel{4}{$b_k-n_k$} \relabel{5}{$-1$} \relabel{6}{$-1$}

\relabel{7}{$-1$} \relabel{8}{$-1$} \relabel{9}{$-1$} \relabel{10}{$-1$} \relabel{a1}{$-1$} \relabel{a2}{$-1$}

\relabel{a3}{$-1$} \relabel{a4}{$-1$} \relabel{a5}{$-1$} \relabel{a6}{$-1$}

\endrelabelbox
       \caption{The relative handlebody decomposition of the $4$-manifold $W_{(p,q)}(\textbf{n})$.}

        \label{han}
\end{figure}

Since $N(\textbf{n})$ is diffeomorphic to $S^1 \times S^2$, one can fix a diffeomorphism $\phi : N(\textbf{n})\to S^1 \times S^2$. By attaching
$2$-handles to $S^1 \times D^3$ along the framed link $\phi(\textbf{L}) \subset S^1
\times S^2$, we obtain a smooth $4$-manifold $W_{p,q}(\textbf{n})$ whose boundary is orientation-preserving diffeomorphic to $L(p,q)$.  As noted by Lisca, the diffeomorphism type of $W_{p,q}(\textbf{n})$  is independent of the choice of $\phi$ since any self-diffeomorphism
of $S^1 \times S^2$ extends to $S^1 \times D^3$.

According to Lisca,
any minimal symplectic filling (in fact Stein filling) of ($L(p,q), \xi_{can})$ is
orientation-preserving diffeomorphic to
$W_{p,q}(\textbf{n})$ for some $\textbf{n} \in
\mathcal{Z}_{k}(\frac{p}{p-q})$.

\subsection{Planar Lefschetz fibrations on Stein fillings} \label{sec: planar}

In \cite{bo}, we described an algorithm to construct a planar Lefschetz fibration $W_{p,q}(\textbf{n}) \to D^2$, based on any given blowup sequence $$(0) \to (1,1) \to \cdots \to \textbf{n}=(n_1, \ldots, n_k)\in
\mathcal{Z}_{k}(\frac{p}{p-q}).$$ Here we briefly review our algorithm, which consists of two parts, {\em stabilization} and {\em surgery}, that gives an ordered set of  vanishing cycles on a disk with $k$ holes which is the fibre of our Lefschetz fibration $W_{p,q}(\textbf{n}) \to D^2$. We begin by describing the first part of our algorithm which we call the stabilization algorithm.

\subsubsection{The stabilization algorithm} \label{sec: stab} For any positive integer $r$,  let $D_r$ denote the disk with $r$ holes. We assume that the holes are aligned horizontally in $D_r$ and we enumerate the holes in $D_r$ from left to right as $H_1, H_2, \ldots,  H_{r}.$

The initial step of the algorithm corresponding to $(0)$ is the disk $D_1$  with no vanishing cycle, as depicted on the top in Figure~\ref{fig: stabil1}. Recall that the only blowup starting from $(0)$ is the initial blowup $(0) \to (1,1)$. The corresponding fibre is the disk $D_2$ with one vanishing cycle $\a_1$, which is parallel to the boundary of $H_1$, as depicted in the middle in Figure~\ref{fig: stabil1}. This is a stabilization of the previous step, where we had the annulus $D_1$ with no vanishing cycle. Depending on the  type of the next blowup, we proceed as follows.

\begin{figure}[ht]
  \relabelbox \small {\epsfxsize=4.5in
  \centerline{\epsfbox{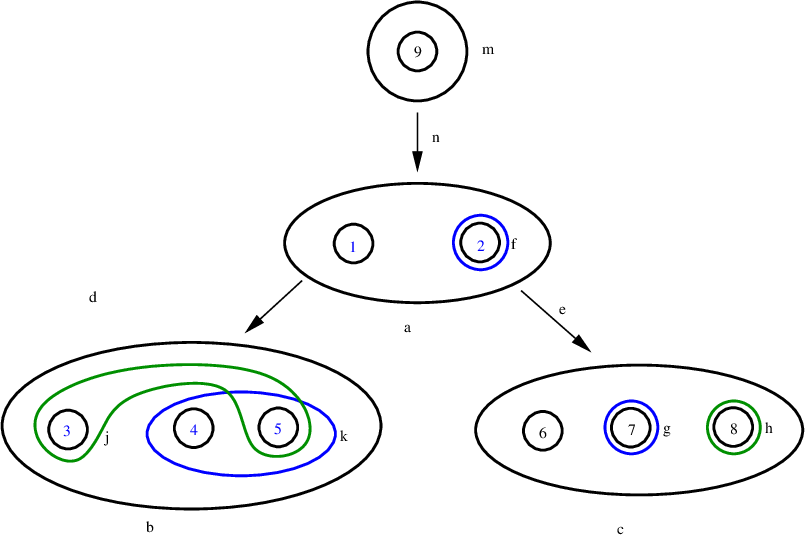}}}

\relabel{1}{$1$} \relabel{2}{$2$} \relabel{3}{$1$} \relabel{4}{$2$} \relabel{5}{$3$} \relabel{6}{$1$} \relabel{7}{$2$} \relabel{8}{$3$} \relabel{9}{$1$}

\relabel{a}{$(1,1)$} \relabel{b}{$(2,1,2)$} \relabel{c}{$(1,2,1)$} \relabel{d}{interior blowup} \relabel{e}{exterior blowup}  \relabel{n}{initial blowup}

\relabel{f}{$\a_1$} \relabel{g}{$\a_1$} \relabel{k}{$\a_1$} \relabel{h}{$\a_2$} \relabel{j}{$\a_2$} \relabel{m}{$(0)$}

\endrelabelbox
       \caption{Stabilizations depending on the type of the blowup.}
        \label{fig: stabil1}
\end{figure}

If we have an interior blowup at the first term $(1,1) \to (2,1,2)$, then $H_2$ ``splits" into two holes, where the new hole $H_3$ is placed to the right of $H_2$. The curve $\a_1$ becomes a convex curve enclosing $H_2$ and $H_3$ in $D_3$.  We introduce a new vanishing cycle $\a_2$ which encloses $H_1$ and $H_3$ in $D_3$ as shown at the bottom left in Figure~\ref{fig: stabil1}. We can view the introduction of $\a_2$ as a stabilization of the previous step.

On the other hand, if we have an  exterior blowup $(1,1) \to (1,2,1)$, then we simply introduce a new hole $H_3$ to the right, and the new vanishing cycle $\a_2$ is parallel to the boundary of $H_3$ in $D_3$ as shown at the bottom right in Figure~\ref{fig: stabil1}. Again, we can view the introduction of $\a_2$ as a stabilization of the previous step.

Now suppose that we have a set of  $r-1$ vanishing cycles $\a_1, \a_2, \ldots, \a_{r-1}$ on a disk with $r$ holes corresponding to some blowup sequence $$(0) \to (1,1) \to \cdots \to (n_1, \ldots, n_r).$$  Depending on the  type of the next blowup we insert a new hole and introduce a new vanishing cycle $\a_r$ as follows.

If we have an interior blowup at the $i$th term, for $1 \leq i \leq r-1$, then the hole $H_{i+1}$  ``splits" into two holes, where the new hole $H_{i+2}$ is placed to the right of $H_{i+1}$ in the resulting disk $D_{r+1}$. We introduce a new vanishing cycle $\a_{r}$ which encloses the holes $H_1, H_2, \ldots, H_{i}$ and the new hole $H_{i+2}$ in $D_{r+1}$. We can view the introduction of $\a_r$ as a stabilization of the previous step.

On the other hand, if we have an  exterior blowup, then we simply insert a new hole $H_{r+1}$ to the right, which is the last hole in the geometric order from the left in the resulting disk $D_{r+1}$ and the new vanishing cycle $\a_r$ is parallel to the boundary of $H_{r+1}$.  Again, we can view the introduction of $\a_r$ as a stabilization of the previous step. \smallskip

Next, we describe the second part of our algorithm which we call the surgery algorithm.

\subsubsection{The surgery algorithm} \label{sec: suralg} The surgery algorithm is based on the link  $\textbf{L}=\bigcup_{i=1}^{k} L_i$, which is used to define $W_{p,q}(\textbf{n})$. The vanishing cycles in this subsection will be mutually disjoint and hence their order does not matter. So we can describe all the vanishing cycles as a set of curves on the  disk $D_k$ with $k$ holes.

\begin{Def} \label{def: gamma} {\em (The $\g$-curves)} For each $1 \leq i \leq k$, let $\g_i$ be the convex curve on $D_k$ enclosing the holes $H_1, H_2, \ldots, H_{i}$. \end{Def}

Then the set of vanishing cycles in this part of the algorithm is $$\{ \underbrace{\g_1, \ldots,\g_1}_{b_1-n_1}, \underbrace{\g_2, \ldots, \g_2}_{b_2-n_2}, \ldots, \underbrace{\g_k, \ldots \g_k}_{b_k -n_k}\},$$ where each $\g_i$ appears $b_i -n_i$ times in the set. In particular, if $b_i=n_i$, then $\g_i$ is not in the set of vanishing cycles.

\subsubsection{Total monodromy} The fibre of the planar Lefschetz fibration $W_{p,q}(\textbf{n}) \to D^2$ is the disk $D_k$ with $k$ holes, where $k$ is  the length of the continued fraction $\frac{p}{p-q}=[b_1, b_2 \ldots, b_k]$. The set of vanishing cycles consists of the curves $\a_1, \a_2, \ldots, \a_{k-1}$ coming from the stabilization algorithm and $\g_1, \g_2, \ldots, \g_k$ (each with a multiplicity) coming from the surgery algorithm. Let $D(\a)$ denote the right-handed twist along a simple closed curve $\a$ on a surface.   The total monodromy of the planar Lefschetz fibration $W_{p,q}(\textbf{n}) \to D^2$ is given as the following composition of Dehn twists along the vanishing cycles $$D(\a_1) D(\a_2) \cdots D(\a_{k-1})  D^{b_1 - n_1} (\g_1) D^{b_2 - n_2}  (\g_2) \cdots D^{b_k - n_k}  (\g_k).$$

In Lemma~\ref{lem: turn} below, we describe another planar Lefschetz fibration on $W_{p,q}(\textbf{n})$.

\begin{Lem} \label{lem: turn} Let $f: W_{p,q}(\textbf{n}) \to D^2$ be the planar Lefschetz fibration we constructed in \cite{bo}.  The total space of the planar Lefschetz fibration obtained by reversing the order of the vanishing cycles of $f$, while taking  their mirror images   is diffeomorphic to $W_{p,q}(\textbf{n})$. \end{Lem}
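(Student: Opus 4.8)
The plan is to deduce the lemma from a general symmetry possessed by every positive planar Lefschetz fibration, namely its invariance (up to orientation-preserving diffeomorphism of the total space) under simultaneous orientation reversal of the base and the fiber; the specific structure of $f$ from \cite{bo} will play no role beyond it being a positive Lefschetz fibration with planar fiber. Write the ordered set of vanishing cycles of $f$ as $v_1, v_2, \ldots, v_m$ (the concatenation of $\a_1,\ldots,\a_{k-1}$ and the $\g_i$ with their multiplicities), so that $f\colon W_{p,q}(\textbf{n})\to D^2$ is built from $D_k\times D^2$ by attaching a Lefschetz $2$-handle along each $v_s$ in turn. Let $\rho\colon D_k\to D_k$ denote the mirror reflection, an orientation-reversing diffeomorphism carrying each curve to its mirror image. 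The fibration produced in the statement is then the one whose ordered vanishing cycles are $\rho(v_m),\rho(v_{m-1}),\ldots,\rho(v_1)$, and the goal is to construct an orientation-preserving diffeomorphism between its total space and $W_{p,q}(\textbf{n})$.

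First I would arrange the critical values in the interior of $D^2\subset\bfc$ so that complex conjugation $z\mapsto\bar z$ on the base carries the fibration to one with the same critical values, its only effect on the combinatorial data being to reverse the (counterclockwise) order of the vanishing cycles, since conjugation reverses the orientation of the base and hence the cyclic order of the vanishing paths. On the fiber I would use the reflection $\rho$. Away from the critical points the fibration is a surface bundle, and there I would define the candidate diffeomorphism to be the product $\rho\times(z\mapsto\bar z)$: because both factors reverse orientation, this map preserves the orientation of the $4$-manifold while sending each $v_s$ to $\rho(v_s)$ and reversing the order. As a bookkeeping check, at the level of total monodromies this is the identity $w'=\rho\,w^{-1}\rho^{-1}$, where $w=D(v_1)\cdots D(v_m)$; because $\rho$ reverses orientation it conjugates each right-handed twist to the inverse of a right-handed twist along the mirrored curve, so $w'$ is again a product of right-handed twists and the reversed-and-mirrored data genuinely defines a positive Lefschetz fibration.

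The one point requiring real work is extending this product map across the Lefschetz singularities. Near each critical point the fibration is modeled on $(z_1,z_2)\mapsto z_1^2+z_2^2$, and the conjugation $(z_1,z_2)\mapsto(\bar z_1,\bar z_2)$ of $\bfc^2$ covers $z\mapsto\bar z$ on the base, preserves this positive Lefschetz singularity (hence the relative framing of the Lefschetz handle), is orientation-preserving on $\bfc^2$, and restricts to an orientation-reversing involution on each fiber. I would then choose the mirror $\rho$ so that, in suitable local fiber coordinates around every vanishing cycle simultaneously, it agrees with these fiberwise conjugations, so that $\rho\times(z\mapsto\bar z)$ and the local conjugations glue to a single orientation-preserving diffeomorphism $\Phi$ intertwining $f$ with the reversed-and-mirrored fibration. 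Making one global fiber reflection compatible with the local conjugation models at all the critical points at once is the only delicate step; once it is arranged, $\Phi$ exhibits the total space of the reversed-and-mirrored fibration as orientation-preserving diffeomorphic to $W_{p,q}(\textbf{n})$, which is exactly the assertion of the lemma.
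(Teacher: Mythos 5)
Your proposal is correct, but it proves the lemma by a genuinely different route than the paper. The paper's proof is a one-picture Kirby-calculus argument: it takes the explicit absolute handlebody diagram from \cite{bo} that induces the fibration $f$, rotates it by $180^{\circ}$ (an ambient isotopy, so the total space is unchanged), and observes that the rotated diagram is precisely the handlebody diagram of the planar Lefschetz fibration whose vanishing cycles are the mirror images in reverse order. Your argument is instead the intrinsic anti-holomorphic symmetry of positive Lefschetz fibrations: post-composing $f$ with conjugation on the base reverses the cyclic order of the vanishing paths, the local model $(z_1,z_2)\mapsto(\bar z_1,\bar z_2)$ shows the critical points remain positive after an orientation-preserving chart change, and identifying the (now orientation-reversed) fiber with $D_k$ via the reflection $\rho$ converts the data into $\rho(v_m),\ldots,\rho(v_1)$, consistent with the monodromy identity $\rho\,w^{-1}\rho^{-1}=D(\rho(v_m))\cdots D(\rho(v_1))$. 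What each approach buys: the paper's proof is elementary and tied to the concrete diagrams of \cite{bo}, requiring nothing beyond reading a Kirby picture two ways; yours establishes the statement for an arbitrary positive Lefschetz fibration over $D^2$ with arbitrary fiber, making transparent that the lemma is a completely general symmetry rather than a feature of this particular construction. Two small points of care in your write-up: away from the critical points the fibration is not literally a product (the monodromy is nontrivial), so ``$\rho\times(z\mapsto\bar z)$'' should be replaced by a fiber-preserving bundle map covering conjugation that restricts to orientation-reversing diffeomorphisms of the fibers; and the ``delicate step'' you flag of matching the global fiber map with the local conjugation models is exactly the standard extension argument in the uniqueness theorem for Lefschetz fibrations with prescribed ordered vanishing-cycle data, so you could shortcut it by simply invoking that uniqueness: $\mathrm{conj}\circ f$ and the reversed-and-mirrored fibration have the same combinatorial data, hence diffeomorphic total spaces.
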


\begin{proof} The result follows from the fact that such a transformation of the vanishing cycles can be achieved by rotating the absolute handlebody diagram inducing the planar Lefschetz fibration constructed in \cite{bo}.  To see this, consider for example the handlebody diagram in \cite[Figure 7]{bo}, which is depicted on the left-hand side in Figure~\ref{fig: palf}.

\begin{figure}[ht]
  \relabelbox \small {\epsfxsize=3.5in
  \centerline{\epsfbox{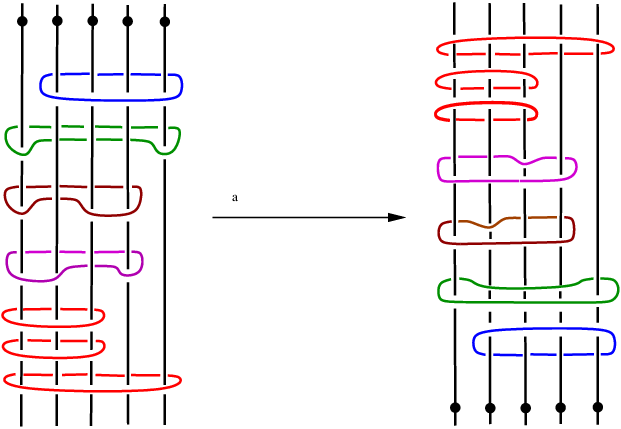}}}
  \relabel{a}{Rotate by $180^{\circ}$}
\endrelabelbox
       \caption{By rotating the handlebody diagram $180^{\circ}$ in a direction {\em normal} to the page, we obtain the mirror images of the vanishing cycles in reverse order.}

        \label{fig: palf}
\end{figure}

By rotating this handlebody diagram $180^{\circ}$ in a direction {\em normal} to the page, we get the handlebody diagram on the right-hand side whose total space is still the same. But this new handlebody diagram corresponds to a planar Lefschetz fibration, where the mirror images of the vanishing cycles appear in reverse order.   Note that here we view the base disk $D_k$ ``horizontally" and the mirror image $\overline{\a}$ of a curve $\a \subset D_k$ is defined to be the reflection of $\a$ along the $x$-axis, once the holes in $D_k$ are aligned horizontally along the $x$-axis. This definition of mirror image, of course, coincides with the mirror image in a vertical $D_k$ by rotating the horizontal $D_k$ clockwise by $90^\circ$. \end{proof}

\subsection{An example} \label{ex: example} For $p=56$ and $q=17$, we have $\dfrac{56}{56-17} = [2,2,5,2,3]$. The $5$-tuple $\textbf{n}=(2,1,4,1,2)$ belongs to $\mathcal{Z}_{5}(\frac{56}{56-17})$ since we have the blowup sequence $$(0) \to (1,1) \to (1,2,1) \to (2,1,3,1) \to (2,1,4,1,2)$$ and hence  we conclude that $W_{(56,17)}((2,1,4,1,2))$ is a Stein filling of the contact $3$-manifold $(L(56, 17), \xi_{can})$. The fibre of the planar Lefschetz fibration $$W_{(56,17)}(2,1,4,1,2) \to D^2$$ is the disk $D_5$ with $5$ holes, and to obtain the vanishing cycles $\a_1, \a_2, \a_3, \a_4$ coming from the stabilization algorithm, we start from the step $(1,2,1)$ which is already shown at the bottom right in Figure~\ref{fig: stabil1} and apply the stabilization algorithm to the interior blowups $(1,2,1) \to (2,1,3,1) \to (2,1,4,1,2)$ as depicted in Figure~\ref{fig: ex3}.

\begin{figure}[ht]
  \relabelbox \small {\epsfxsize=4.5in
  \centerline{\epsfbox{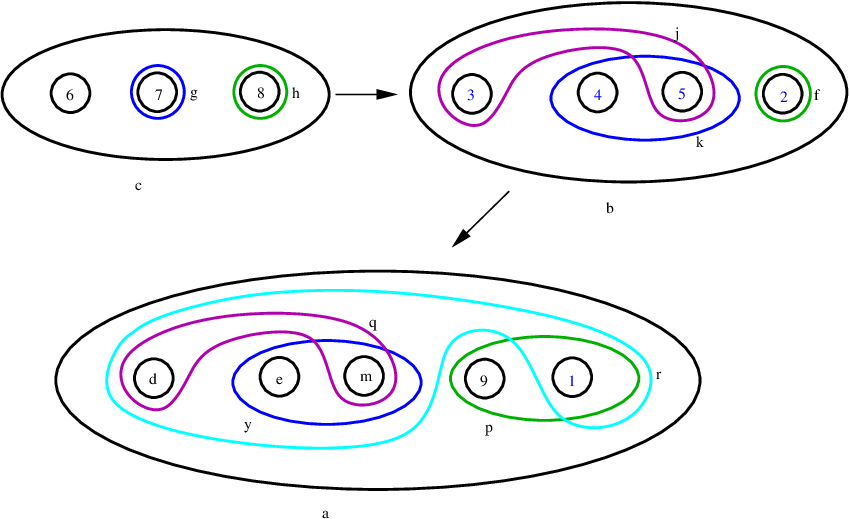}}}

\relabel{1}{$5$} \relabel{2}{$4$} \relabel{3}{$1$} \relabel{d}{$1$} \relabel{4}{$2$} \relabel{e}{$2$} \relabel{5}{$3$} \relabel{m}{$3$} \relabel{6}{$1$} \relabel{7}{$2$} \relabel{8}{$3$} \relabel{9}{$4$}

\relabel{a}{$(2,1,4,1,2)$} \relabel{b}{$(2,1,3,1)$} \relabel{c}{$(1,2,1)$}

\relabel{f}{$\a_2$} \relabel{g}{$\a_1$} \relabel{k}{$\a_1$} \relabel{y}{$\a_1$}  \relabel{p}{$\a_2$} \relabel{h}{$\a_2$} \relabel{r}{$\a_4$} \relabel{j}{$\a_3$} \relabel{q}{$\a_3$}

\endrelabelbox
       \caption{The vanishing cycles $\a_1, \a_2, \a_3, \a_4$ coming from the stabilization algorithm. }
        \label{fig: ex3}
\end{figure}

Note that $b_1 - n_1=0$, whereas $b_2 - n_2=b_3 - n_3=b_4 - n_4=b_5-n_5=1$, which implies that the set of vanishing cycles coming from the surgery algorithm in this case is $\g_2, \g_3, \g_4$ and $\g_5$ as shown in Figure~\ref{fig: ex4}.

\begin{figure}[ht]
  \relabelbox \small {\epsfxsize=3.5in
  \centerline{\epsfbox{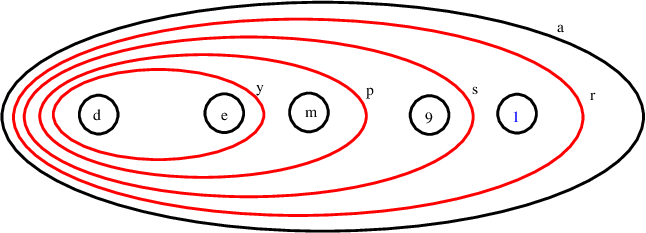}}}

\relabel{1}{$5$}  \relabel{d}{$1$}  \relabel{e}{$2$}  \relabel{m}{$3$}  \relabel{9}{$4$}

\relabel{a}{$D_5$} \relabel{y}{$\g_2$}  \relabel{p}{$\g_3$} \relabel{r}{$\g_5$} \relabel{s}{$\g_4$}

\endrelabelbox
       \caption{The vanishing cycles $\g_2, \g_3, \g_4, \g_5$ coming from the surgery algorithm. }
        \label{fig: ex4}
\end{figure}

Consequently, the total monodromy is given as the follows $$D(\a_1) D(\a_2) D(\a_3) D(\a_4)  D (\g_2) D (\g_3)  D (\g_4) D (\g_5).$$

\begin{Rem}\label{rem: monod} By Lemma~\ref{lem: turn}, there is a planar Lefschetz fibration  $W_{(56,17)}(2,1,4,1,2) \to D^2$ whose monodromy factorization is given by
$$ D (\g_5)  D (\g_4) D (\g_3) D (\g_2)  D(\overline{\a}_4) D(\overline{\a}_3) D(\overline{\a}_2) D(\overline{\a}_1).$$ \end{Rem}

\section{Unbraided wiring diagrams} \label{palf}

\subsection{The blowup algorithm} \label{sec: blowup}

In this subsection, we describe an algorithm to construct an unbraided wiring diagram corresponding to a blowup sequence starting from the  initial blowup  $(0) \to (1,1)$.   The wiring diagram corresponding to $(0)$ is a single wire $w_1$ without any marked points and the wiring diagram corresponding
to $(1,1)$, consists of two parallel wires $\{w_1, w_2\}$ so that  $w_1$ is on top without any marked points, and  $w_2$ has a single marked point $x_1$.  The next step in the algorithm depends on whether we have an interior or exterior blowup that follows  the  initial blowup  $(0) \to (1,1)$.

If we have an interior blowup at the first term $(1,1) \to (2,1,2) $ we introduce a new wire $w_3$, which is initially below $w_2$ on the right-hand side of the diagram   and as it moves to the left,  it goes through the marked point $x_1$ on $w_2$, but otherwise remains parallel to $w_2$ and then intersects $w_1$ at a new marked point $x_2$, which is to the left of $x_1$. This diagram with three wires $\{w_1, w_2, w_3\}$  corresponds to $(2,1,2)$,  which we depicted in Figure~\ref{fig: initial1}.

\begin{figure}[ht]  \relabelbox \small {\epsfxsize=4.5in
\centerline{\epsfbox{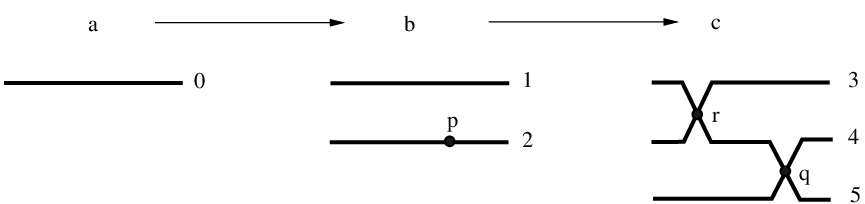}}}
\relabel{a}{$(0)$} \relabel{b}{$(1,1)$} \relabel{c}{$(2,1,2)$} \relabel{p}{$x_1$} \relabel{r}{$x_2$}  \relabel{q}{$x_1$} \relabel{0}{$w_1$}  \relabel{3}{$w_1$} \relabel{1}{$w_1$} \relabel{4}{$w_2$} \relabel{2}{$w_2$}  \relabel{5}{$w_3$} \endrelabelbox
\caption{Wiring diagrams corresponding to the blowup sequence $(0) \to(1,1) \to (2,1,2)$.} \label{fig: initial1} \end{figure}

On the other hand, if we have an  exterior blowup $(1,1) \to (1,2,1)$,  we insert in the diagram a new wire $w_3$ which is right below $w_2$ and parallel to it. We place a marked point  $x_2$ on $w_3$  so that  $x_2$ is to the left of $x_1$. This diagram with three wires $\{w_1, w_2, w_3\}$  corresponds to $(1,2,1)$, which we depicted in Figure~\ref{fig: initial2}.

\begin{figure}[ht]
  \relabelbox \small {\epsfxsize=4.5in
  \centerline{\epsfbox{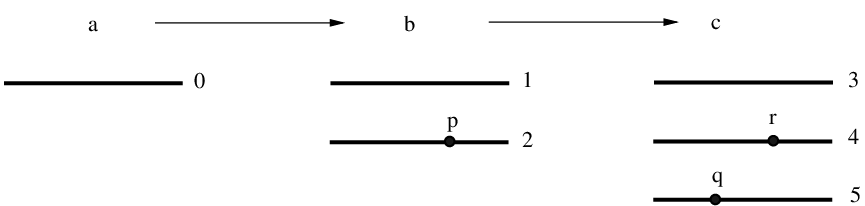}}}
\relabel{a}{$(0)$} \relabel{b}{$(1,1)$} \relabel{c}{$(1,2,1)$} \relabel{p}{$x_1$} \relabel{r}{$x_1$}  \relabel{q}{$x_2$} \relabel{0}{$w_1$}  \relabel{3}{$w_1$} \relabel{1}{$w_1$}  \relabel{4}{$w_2$} \relabel{2}{$w_2$}  \relabel{5}{$w_3$}
\endrelabelbox \caption{Wiring diagrams corresponding to the blowup sequence $(0) \to(1,1) \to (1,2,1)$.} \label{fig: initial2} \end{figure}

Now suppose that we have an unbraided wiring diagram $\mathcal{W}$ consisting of $r$ wires $\{w_1, w_2$, $\ldots, w_{r}\}$ corresponding to some blowup sequence starting from the  initial blowup  $(0) \to (1,1)$ and ending with some $r$-tuple of positive integers.  We would like to emphasize that the indices of the wires in the set $\mathcal{W}$ above indicate the order in which the wires are introduced into the diagram. Depending on the  type of the next blowup we insert a new wire in $\mathcal{W}$ and adjust the diagram accordingly as follows.

Suppose that we have an interior blowup at the $i$th term, for some $1 \leq i \leq r-1$. Let $w_j \in \mathcal{W}$ be the $(i+1)$st wire with respect to the {\em geometric ordering} of the wires on the right-hand side of the diagram, and let $\mathcal{W}_i$ denote the subset of $\mathcal{W}$  consisting of all the wires which appears before $w_j$ in this ordering. In other words, $\mathcal{W}_i$ is the set of the top $i$ wires in the geometric ordering of the wires on the right-hand side of the diagram.  Now we introduce a new wire, named $w_{r+1}$, into the diagram, which is initially right below $w_j$ on the right-hand side of the diagram  and as it moves to the left, goes through all the marked points on $w_j$ but otherwise remains parallel to $w_j$, and then  we insert a new marked point $x_r$ on $w_{r+1}$ which is the simultaneous intersection of  $w_{r+1}$ and  all the wires in $\mathcal{W}_i$.    We place the marked point $x_r$ to the left of $x_{r-1}$. For this to work, we need to know that  the set  $\mathcal{W}_i  \cup \{w_{r+1}\}$ of wires is geometrically consecutive on the left-hand side, which we verify in Lemma~\ref{lem: consec} below, where we refer to this step in the algorithm as the {\em last twist}.

On the other hand, if we have an  exterior blowup, we insert a new wire $w_{r+1}$ below all the wires in $\mathcal{W}$ with no intersection points with the other wires, and place a single  marked point $x_r$ on $w_{r+1}$, which is to the left of $x_{r-1}$.

We  call this procedure the blowup algorithm for wiring diagrams. Note that in the resulting wiring diagram, the wires are indexed in the order they are introduced into the diagram but their geometric ordering on the right-hand side (or the left-hand side) of the diagram as viewed on the page, might be different from the index ordering. Moreover,  by our algorithm, $w_1$ will  always be at the top on the right-hand side of the diagram.

\begin{Lem} \label{lem: consec} If $\mathcal{W}$ is an unbraided wiring diagram consisting of wires $\{w_1, w_2, \ldots, w_{r}\}$, which is obtained by the blowup algorithm with respect to some blowup sequence starting from the  initial blowup  $(0) \to (1,1)$, then any set of wires including $w_1$, which is consecutive with respect to the geometric ordering on the right-hand side of the diagram,  is also geometrically consecutive (perhaps with a different geometric ordering) on the left-hand side of the diagram. Moreover, if any wire other than $w_1$ carries an even (resp. odd) number of marked points, then on the left-hand side it is above (resp. below) all the wires which appear before it in the geometric ordering of the wires on the right-hand side of the diagram.
\end{Lem}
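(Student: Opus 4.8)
The plan is to prove, by induction on the number $r$ of wires (equivalently, on the length of the blowup sequence), a single combinatorial description of the left-hand order from which both assertions follow simultaneously. Writing $w^{(1)},\dots,w^{(r)}$ for the wires in their right-hand geometric order, so that $w^{(1)}=w_1$, I would show that the left-hand order is produced by the following \emph{insertion process}: place $w^{(1)}$, and then for $\rho=2,\dots,r$ insert $w^{(\rho)}$ at the \emph{top} of the current stack if it carries an even number of marked points and at the \emph{bottom} if it carries an odd number. Granting this, the lemma is immediate. The ``Moreover'' clause is exactly the statement that $w^{(\rho)}$ is placed above, respectively below, all of $w^{(1)},\dots,w^{(\rho-1)}$, which are precisely the wires preceding it on the right. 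And since every later wire is inserted at an extreme of the stack, nothing is ever placed between members of an initial segment $\{w^{(1)},\dots,w^{(m)}\}$; hence such a segment---which is exactly a right-consecutive set containing $w_1$---remains a contiguous block on the left, giving the first assertion.

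The base case $(1,1)$ is clear: the single wire $w_2$ carries one (free) marked point and lies below $w_1$ on both sides. For the inductive step I treat the two blowup types. An \emph{exterior} blowup appends a new wire $w_{r+1}$ below all others, with a single free marked point and no crossings; it is therefore appended at the bottom on both sides, its odd marked-point count is consistent with a bottom insertion, and the orders of the old wires are unchanged, so the insertion description persists.

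The interior blowup is the substantive case, and the main obstacle. Here $w_{r+1}$ is introduced just below $w_j=w^{(i+1)}$, runs parallel to $w_j$ through each of its $c_j$ marked points, and finally crosses all wires of $\mathcal{W}_i=\{w^{(1)},\dots,w^{(i)}\}$ simultaneously at the new leftmost point $x_r$. The key geometric input is that where several wires meet transversally their vertical order is reversed from right to left. Applying this at each marked point of $w_j$: since $w_{r+1}$ enters just below $w_j$ and remains its immediate neighbor, the pair's relative order flips exactly once at each such point, so just to the right of $x_r$ the wire $w_{r+1}$ lies below $w_j$ when $c_j$ is even and above $w_j$ when $c_j$ is odd. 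On the other hand, the inductive hypothesis (the ``Moreover'' clause for the smaller diagram) places $w_j$ immediately above the contiguous block $\mathcal{W}_i$ when $c_j$ is even and immediately below it when $c_j$ is odd. Comparing the two, $w_{r+1}$ lands precisely between $w_j$ and the block $\mathcal{W}_i$; in particular $\mathcal{W}_i\cup\{w_{r+1}\}$ is left-consecutive, which is exactly what is needed for the ``last twist'' to be well defined. This self-consistency, where claims one and two for the smaller diagram feed into the step, is the heart of the argument.

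It then remains to match the resulting order with the insertion process for the enlarged diagram under the updated marked-point counts. Passing through the marked points of $w_j$ leaves the relative order of the old wires among themselves unchanged, and the crossing at $x_r$ reverses the block $\mathcal{W}_i\cup\{w_{r+1}\}$ while fixing everything above and below it. Now each wire of $\mathcal{W}_i$ acquires the single new marked point $x_r$, so all of their parities flip, whereas $w_j$ and the wires following it keep their counts and $w_{r+1}$ carries $c_j+1$ marked points. I would record the elementary fact that flipping all top/bottom choices of an initial segment of the insertion process reverses exactly the block it produces; this matches the geometric reversal of $\mathcal{W}_i$ at $x_r$, the parity $c_j+1$ of $w_{r+1}$ places it at the correct extreme of that reversed block, and---because the trailing wires $w^{(i+2)},\dots,w^{(r)}$ are inserted only at the extremes---their placement is insensitive to the modification of the central block and so agrees with their placement before. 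Assembling these observations yields the insertion description for the enlarged diagram, completing the induction.
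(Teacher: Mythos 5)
Your proof is correct and follows essentially the same route as the paper's: induction on the number of wires, with the exterior case trivial and the interior case resolved by the parity-flip argument (one flip of the pair $w_{r+1}, w_j$ per marked point of $w_j$) combined with the inductive placement of $w_j$ at the extreme of the contiguous block $\mathcal{W}_{i+1}$, followed by the reversal effected by the last twist. Your ``insertion process'' invariant is really a repackaging of the lemma's ``Moreover'' clause (which, as you note, implies the consecutiveness clause), though it has the merit of making explicit some bookkeeping the paper's proof leaves implicit --- namely that the wires of $\mathcal{W}_i$ all flip parity at $x_r$ exactly when the last twist reverses their left-hand order, and that the trailing wires are unaffected.
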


\begin{proof} We prove the lemma by induction on the number of wires. The two wiring diagrams we described above corresponding to the blowup sequences $(0) \to (1,1) \to (2,1,2) $ and $(0) \to (1,1) \to (1,2,1)$, respectively, can be taken to be the initial step of our induction argument. The properties stated in Lemma~\ref{lem: consec}  hold for these wiring diagrams.

Suppose that both properties stated in Lemma~\ref{lem: consec} hold when there are up to $r \geq 3$ wires in any unbraided wiring diagram obtained as a result of the blowup algorithm with respect to some blowup sequence starting from the  initial blowup  $(0) \to (1,1)$.    We will prove that these properties continue to hold when a new wire is inserted into the diagram corresponding to a new blowup.  If the new wire inserted corresponds to an exterior blowup, it is clear that both properties stated in Lemma~\ref{lem: consec} continue to hold in the  new diagram with $r+1$ wires. This is because in this case, the new wire will be inserted at the bottom of the diagram with a single marked point on it and without any intersections with the other wires.

Suppose that a new wire $w_{r+1}$ is inserted into $\mathcal{W}$ with respect to an interior blowup at the $i$th term, for some $1 \leq i  \leq r-1$.   Let $\mathcal{W}_{s}$ be the subset  of $\mathcal{W}$ consisting of the top $s$ wires in the {\em geometric ordering} of the wires on the right-hand side of the diagram. Note that $\mathcal{W}_{i+1} = \mathcal{W}_i \cup \{w_j\}$, since by definition, $w_j \in \mathcal{W}$ is the $(i+1)$st wire with respect to the geometric ordering of the wires on the right-hand side of the diagram.

Assume that $w_j$ has an odd number of marked points. By the induction hypotheses, before we insert $w_{r+1}$, the wires in the set  $\mathcal{W}_{i+1}$ are geometrically consecutive (perhaps with a different geometric ordering) on  the left-hand side,  while  $w_j$ is at the bottom of these geometrically  consecutive wires. The new wire $w_{r+1}$ will be initially right below the wire $w_j$ on the right-hand side of the diagram and $w_{r+1}$ will go through all the marked points on $w_j$, and otherwise it will remain parallel to $w_j$, before the last twist in the algorithm.   But since $w_j$ has an odd number of marked points, and $w_{r+1}$ is initially right below $w_j$, the wire $w_{r+1}$ will be right above $w_j$ on the left-hand side before the last twist. Therefore, before the last twist, the wires in the set $\mathcal{W}_{i+1} \cup \{w_{r+1}\}$ will be geometrically consecutive  on the left-hand side, and moreover $w_{r+1}, w_j$ will be the bottom two wires in that order.  Finally, when we twist once all the wires in the set $\mathcal{W}_i \cup \{w_{r+1}\} $ (to create a simultaneous intersection point of these $i+1$ wires) as part of the blowup algorithm, the wires in the set  $\mathcal{W}_{i+1} \cup \{w_{r+1}\}$ will remain geometrically consecutive  on the left-hand side, where $w_{r+1}$ will appear at the top, and $ w_j$ will appear at the bottom of this consecutive set of wires.

Assume that $w_j$ has an even number of marked points. By the induction hypotheses, before we insert $w_{r+1}$, the wires in the set  $\mathcal{W}_{i+1}$ are geometrically consecutive (perhaps with a different geometric ordering) on  the left-hand side,  while  $w_j$ is at the top of these geometrically  consecutive wires. The new wire $w_{r+1}$ will be initially right below the wire $w_j$ on the right-hand side of the diagram and $w_{r+1}$ will go through all the marked points on $w_j$, and otherwise it will remain parallel to $w_j$, before the last twist in the algorithm.   But since $w_j$ has an even number of marked points, and $w_{r+1}$ is initially right below $w_j$, the wire $w_{r+1}$ will be right below $w_j$ on the left-hand side before the last twist. Therefore, before the last twist, the wires in the set $\mathcal{W}_{i+1} \cup \{w_{r+1}\}$ will be geometrically consecutive  on the left-hand side and, moreover, $w_j, w_{r+1}$ will be the top two wires in that order.  Finally, when we twist once all the wires in the set $\mathcal{W}_i \cup \{w_{r+1}\} $ (to create a simultaneous intersection point of these $i+1$ wires) as part of the blowup algorithm, the wires in the set  $\mathcal{W}_{i+1} \cup \{w_{r+1}\}$ will remain geometrically consecutive  on the left-hand side, where $w_j$ will appear at the top, and $w_{r+1}$ will appear at the bottom of this consecutive set of wires.

The discussion above proves that, after we insert $w_{r+1}$, any set of wires in $\mathcal{W} \cup \{w_{r+1}\}$ including $w_1$, which is consecutive with respect to the geometric ordering on the right-hand side of the diagram,  is also geometrically consecutive (perhaps with a different geometric ordering) on the left-hand side of the diagram.

Moreover, if $w_j$ has an odd (resp. even) number of marked points, then $w_{r+1}$ will have even (resp.odd) number of marked points by the blowup algorithm and it will be above (resp. below) all the wires in $\mathcal{W}_{i+1}$ on the left-hand side of the diagram.  The upshot is that both properties stated in Lemma~\ref{lem: consec}  hold true for the unbraided wiring diagram $\mathcal{W} \cup \{w_{r+1}\}$.
\end{proof}

\subsection{An example} Consider the blowup sequence $$(0) \to (1,1) \to (1,2,1) \to (2,1,3,1) \to (2,1,4,1,2).$$ In Figure~\ref{fig: example} below we depict the diagrams corresponding to  $$(1,2,1) \to (2,1,3,1) \to (2,1,4,1,2)$$ starting from the diagram of $(1,2,1)$ already depicted in  Figure~\ref{fig: initial2}.

\begin{figure}[ht]  \relabelbox \small {\epsfxsize=4.5in
\centerline{\epsfbox{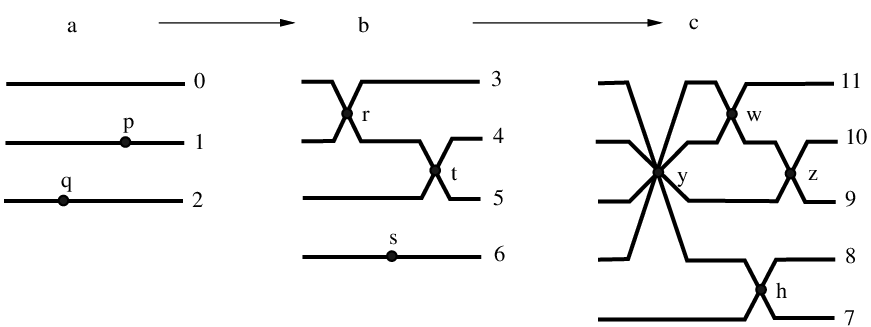}}}
\relabel{a}{$(1,2,1)$} \relabel{b}{$(2,1,3,1)$} \relabel{c}{$(2,1,4,1,2)$} \relabel{p}{$x_1$} \relabel{z}{$x_1$} \relabel{t}{$x_1$} \relabel{r}{$x_3$}  \relabel{w}{$x_3$} \relabel{y}{$x_4$} \relabel{q}{$x_2$} \relabel{h}{$x_2$}  \relabel{s}{$x_2$} \relabel{0}{$w_1$} \relabel{3}{$w_1$} \relabel{11}{$w_1$}  \relabel{2}{$w_3$} \relabel{6}{$w_3$} \relabel{8}{$w_3$} \relabel{1}{$w_2$}  \relabel{4}{$w_2$}  \relabel{10}{$w_2$}   \relabel{5}{$w_4$} \relabel{9}{$w_4$} \relabel{7}{$w_5$} \endrelabelbox
\caption{Wiring diagrams corresponding to the blowup sequence $(1,2,1) \to (2,1,3,1) \to (2,1,4,1,2)$.}\label{fig: example} \end{figure}

\subsection{The twisting algorithm} \label{sec: twisting}

Suppose that $\mathcal{W}$ is an unbraided wiring diagram consisting of wires $\{w_1, w_2, \ldots, w_{k}\}$, which is obtained by the blowup algorithm with respect to some blowup sequence, starting from the  initial blowup  $(0) \to (1,1)$ and ending with some $k$-tuple of positive integers.   Let $\mathcal{W}_{s}$ be the subset  of $\mathcal{W}$ consisting of the top $s$ wires in the {\em geometric ordering} of the wires on the right-hand side of the diagram,  as in Section~\ref{sec: blowup}.  Based on any $k$-tuple $\textbf{m}=(m_1,\ldots, m_{k})$, where $m_i$ is a nonnegative integer,  we describe a procedure called the {\em twisting algorithm} to extend the unbraided wiring diagram $\mathcal{W}$ to another unbraided wiring diagram $\mathcal{W} ({\textbf{m})}$ with the same number of wires but with more marked points obtained by extra twists inserted to the left.

If $m_1=0$, then we do not modify $\mathcal{W}$ and move onto the next step. If $m_1> 0$, then we simply add $m_1$ extra marked points $\underbrace{y_1, y_1,  \ldots, y_1}_{m_1}$ on $w_1$ to the left of $x_{k-1}$. If $m_2=0$, then we do not modify the diagram any further and move onto the next step. If $m_2 > 0$, then by Lemma~\ref{lem: consec}, we know that the wires in $\mathcal{W}_2$ are geometrically consecutive on the left-hand side of the diagram $\mathcal{W}$. We extend $\mathcal{W}$ by  twisting $m_2$-times the wires in $\mathcal{W}_2$, creating consecutive simultaneous intersection points $\underbrace{y_{2}, y_{2},  \ldots, y_{2}}_{m_2}$ to the left of the last, if any, $y_{1}$.  If $m_3=0$, then we do not modify the diagram any further and move onto the next step. Now suppose that $m_3 > 0$. Since the wires in $\mathcal{W}_3$ are geometrically consecutive on the left-hand side of the diagram $\mathcal{W}$ by Lemma~\ref{lem: consec}  these wires will remain geometrically consecutive after the first additional twists we possibly put into the diagram corresponding to $m_2$. We extend the diagram further by  twisting $m_3$-times the wires in $\mathcal{W}_3$, creating simultaneous intersection point $\underbrace{y_{3}, y_{3},  \ldots, y_{3}}_{m_3}$ to  the left of the last, if any,  $y_2$.  By iterating this procedure, we extend $\mathcal{W}$ to  $\mathcal{W} ({\textbf{m})}$ with additional marked points corresponding to \textbf{m}.

\begin{Rem} Here, we think of $m_i$ as the ``multiplicity" of the point $y_i$. If $m_i=0$, then $y_i$ does not appear in the diagram, and if $m_i > 1$, then $y_i$ is repeated $m_i$-times. To avoid cumbersome  notation, we do not put an extra index to distinguish between different $y_i$ type points.  \end{Rem}

\subsection{An example} Here we give an example where we extend the wiring diagram $\mathcal{W}$ corresponding to the blowup sequence   $$(0) \to (1,1) \to (1,2,1) \to (2,1,3,1) \to (2,1,4,1,2)$$ depicted in Figure~\ref{fig: example} to $\mathcal{W}({\textbf{m}})$  applying the twisting algorithm based on  $\textbf{m}=(0,1,1,1,1)$.  Note that inside the dotted square in Figure~\ref{fig: example2}, there is a copy of $\mathcal{W}$ from Figure~\ref{fig: example}.

\begin{figure}[ht]  \relabelbox \small {\epsfxsize=4.5in
\centerline{\epsfbox{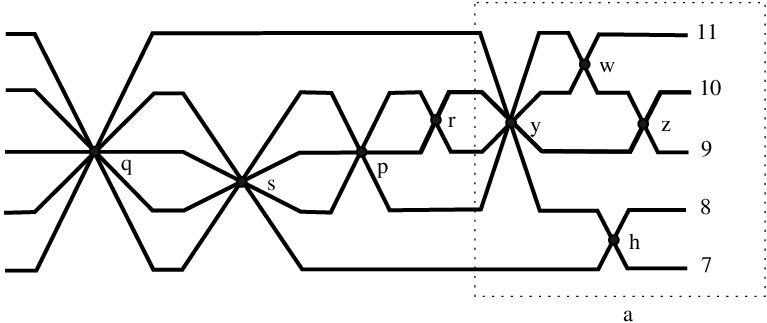}}}
 \relabel{a}{$\mathcal{W}$} \relabel{p}{$y_{3}$} \relabel{z}{$x_1$}  \relabel{r}{$y_{2}$}  \relabel{w}{$x_3$} \relabel{y}{$x_4$} \relabel{q}{$y_{5}$} \relabel{s}{$y_{4}$} \relabel{h}{$x_2$}  \relabel{11}{$w_1$}   \relabel{8}{$w_3$}   \relabel{10}{$w_2$}  \relabel{9}{$w_4$} \relabel{7}{$w_5$} \endrelabelbox
\caption{Extending $\mathcal{W}$ to $\mathcal{W} ((0,1,1,1,1))$ by applying the surgery algorithm based on $\textbf{m}=(0,1,1,1,1)$.}\label{fig: example2} \end{figure}

In this example $m_1=0$ and $m_2=1$ and the wires $\mathcal{W}_2=\{w_1, w_2\}$ are geometrically consecutive on the left-hand side of $\mathcal{W}$. Now we twist them together once to obtain the marked point $y_{2}$, which is to the left of $x_4$. Since $m_3=1$, next we twist the wires in $\mathcal{W}_3=\{w_1, w_2,w_4\}$ (which are geometrically consecutive) together once to obtain the marked point $y_{3}$, which is to the left of $y_{2}$.  Since $m_4=1$, we twist the wires in $\mathcal{W}_4=\{w_1, w_2, w_4, w_3\}$ (which are geometrically consecutive) together once to obtain the marked point $y_{4}$, which is to the left of $y_{3}$. Finally, since $m_5=1$, we twist all the wires in $\mathcal{W}_5 = \mathcal{W}$ together once to obtain the marked point $y_{5}$, which is to the left of $y_{4}$, as illustrated in Figure~\ref{fig: example2}.

\begin{Rem} \label{rem: defns} We will also speak about $\d(y_s)$, $\D(y_s)$ and $V(y_s)$ for each marked point $y_s$ in the rest of the paper, as described in Definitions~\ref{def: convex}, ~\ref{def: twist}, and ~\ref{def: cycle}. \end{Rem}

\section{From vanishing cycles to unbraided wiring diagrams}

We recall the main theorem from the introduction, where we have replaced $W$ with $W_{p,q}(\textbf{n})$ below, to be more precise. \smallskip

\noindent {\bf Theorem~\ref{thm: main}.} {\em There is an algorithm to draw an explicit unbraided wiring diagram whose associated planar Lefschetz fibration obtained by the method of Plamenevskaya and  Starkston \cite{ps} is equivalent to the planar Lefschetz fibration $W_{p,q}(\textbf{n}) \to D^2$  constructed by the authors in \cite{bo}.} \smallskip

Before we give the proof of Theorem~\ref{thm: main} below, we illustrate the statement and its proof on an example. First we introduce some notation that will be used  in the following discussion.  The  disk $D_k$ with $k$ holes will be viewed in two different but equivalent ways as follows: (i) the holes are aligned horizontally in $D_k$ and enumerated from left to right  or (ii) the holes are aligned vertically in $D_k$ and enumerated from top to bottom. Here we identify the ``horizontal"  $D_k$ in (i) with the ``vertical" $D_k$ in (ii) by rotating the ``horizontal"  $D_k$  clockwise by $90^\circ$. The reason why we consider these two embeddings of a disk with holes is that the vanishing cycles in \cite{bo} are described on a horizontal  $D_k$, while the vanishing cycles in \cite{ps} are described on a vertical $D_k$. Here we compare them on a  vertical $D_k$ via the identification given above. When we view $D_k$ vertically, the mirror image $\overline{\a}$ of a curve $\a \subset D_k$ is defined to be the reflection of $\a$ along the $y$-axis, once the holes in $D_k$ are aligned vertically along the $y$-axis.

\subsection{An example} \label{subsec: exa} In Section~\ref{ex: example}, we constructed a planar Lefchetz fibration $$W_{(56, 17)}((2,1,4,1,2)) \to D^2$$ whose fibre is the disk $D_5$ with $5$ holes and whose vanishing cycles are the curves $$\a_1, \a_2, \a_3, \a_4, \g_2, \g_3, \g_4, \g_5$$ in $D_5$, which are depicted in Figures~\ref{fig: ex3} and ~\ref{fig: ex4}. We claim that the planar Lefschetz fibration obtained by using the method of Plamenevskaya and  Starkston associated to the unbraided wiring diagram $W ((0,1,1,1,1))$ in Figure~\ref{fig: example2} has exactly the same set of vanishing cycles (viewed in a vertical $D_5$), except that we have to take ``mirror images" of all the curves and {\em reverse the order}  of the vanishing cycles in the total monodromy.  In other words, first we rotate the disks in Figures~\ref{fig: ex3} and ~\ref{fig: ex4} clockwise by $90^\circ$ and then take the mirror images of the curves. This modification of the vanishing cycles is not an issue by Lemma~\ref{lem: turn}. Note that the mirror image of a $\g$-curve is equal to itself, and hence we only need to take the mirror images of  the $\a$-curves.  As a matter of fact, we claim  that  $V(y_{j})=\g_j$, for $ 2 \leq j \leq 5$   and $V(x_i) = \overline{\a}_i$, for $ 1 \leq i \leq 4$ (see Remark~\ref{rem: defns} for notation). To verify our claim, we apply the method of Plamenevskaya and  Starkston (see Section~\ref{sec: bwd}), to describe a set of ordered vanishing cycles associated to the marked points in Figure~\ref{fig: example2}, where we depicted the convex curves assigned to the marked points in Figure~\ref{fig: ex9}.

\begin{figure}[ht]  \relabelbox \small {\epsfxsize=5.5in
\centerline{\epsfbox{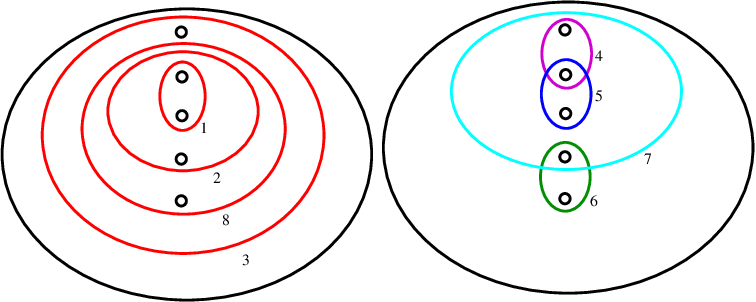}}}
 \relabel{1}{$\d(y_2)$}  \relabel{2}{$\d(y_3)$}  \relabel{3}{$\d(y_5)$} \relabel{4}{$\d(x_3)$}  \relabel{5}{$\d(x_1)$}  \relabel{6}{$\d(x_2)$} \relabel{7}{$\d(x_4)$} \relabel{8}{$\d(y_4)$}
 \endrelabelbox
\caption{Convex curves in $D_5$ assigned to the marked points in  Figure~\ref{fig: example2}.} \label{fig: ex9} \end{figure}

Note that $$V(y_{5})= \D(x_1) \circ \cdots \circ \D(x_4)\circ \D(y_2) \circ \D(y_3) \circ \D(y_4) (\d(y_5)) = \d(y_5) = \g_5,$$ $$ V(y_{4})= \D(x_1) \circ \cdots \circ \D(x_4)\circ \D(y_2) \circ \D(y_3)  (\d(y_4)) = \g_4, \; \mbox{as illustrated in Figure~\ref{fig: ex5c}},$$  $$ V(y_{3})= \D(x_1) \circ \cdots \circ \D(x_4)\circ \D(y_2)  (\d(y_3)) = \g_3, \; \mbox{as illustrated in Figure~\ref{fig: ex5}, and}$$ $$ V(y_{2})=  \D(x_1) \circ \cdots \circ \D(x_4) (\d(y_2)) = \g_2, \; \mbox{as illustrated in Figure~\ref{fig: ex6}}. $$

\begin{figure}[ht]  \relabelbox \small {\epsfxsize=5in
\centerline{\epsfbox{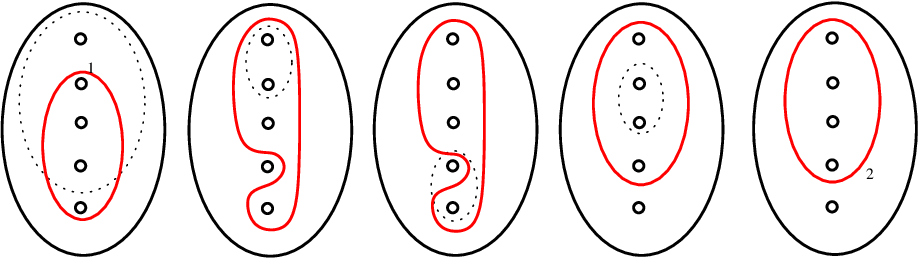}}}
\relabel{1}{$\d(y_4)$}  \relabel{2}{$\g_4$}
 \endrelabelbox
\caption{Starting from $\d(y_4)$, we apply a counterclockwise half-twist on the subdisk enclosed by the dotted curve, at each step from left to right. }\label{fig: ex5c} \end{figure}

\begin{figure}[ht]  \relabelbox \small {\epsfxsize=5in
\centerline{\epsfbox{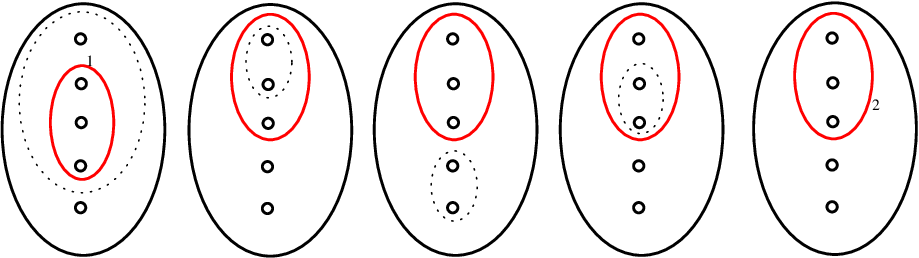}}}
\relabel{1}{$\d(y_3)$}  \relabel{2}{$\g_3$}
 \endrelabelbox
\caption{Starting from $\d(y_3)$, we apply a counterclockwise half-twist on the subdisk enclosed by the dotted curve, at each step from left to right. }\label{fig: ex5} \end{figure}

\begin{figure}[ht]  \relabelbox \small {\epsfxsize=5in
\centerline{\epsfbox{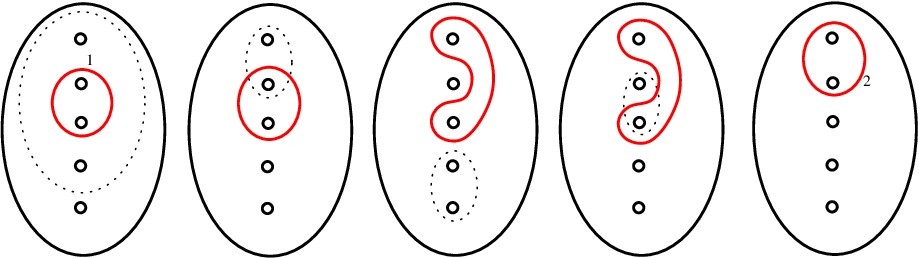}}}
\relabel{1}{$\d(y_2)$}  \relabel{2}{$\g_2$}
 \endrelabelbox
\caption{Starting from $\d(y_2)$, we apply a counterclockwise half-twist on the subdisk enclosed by the dotted curve, at each step from left to right.  }\label{fig: ex6} \end{figure}

\begin{Rem} In Figure~\ref{fig: ex5c}, we have not included $\d(y_2)$ and  $\d(y_3)$ as dotted curves  since  $\D(y_2) \circ \D(y_3)$ would not have any effect on $\d(y_4)$. Similarly, we have not included $\d(y_2)$ as a dotted curve in Figure~\ref{fig: ex5} since $\D(y_2)$ would not have any effect on $\d(y_3)$. We will generalize this observation as Lemma~\ref{lem: typex} in Section~\ref{subsec: gamma}.
\end{Rem}

Moreover, $V(x_4)= \overline{\a}_4$ by comparing Figure~\ref{fig: ex3} and Figure~\ref{fig: ex7}; $V(x_3)= \overline{\a}_3$ by comparing Figure~\ref{fig: ex3} and Figure~\ref{fig: ex8}, and finally $V(x_2)= \d(x_2) = \overline{\a}_2 = \a_2$ and  $V(x_1)= \d(x_1)= \overline{\a}_1 =\a_1$,  by comparing Figure~\ref{fig: ex3} and Figure~\ref{fig: ex9b}. Note that the total monodromy of the planar Lefschetz fibration is $$ D (\g_5)  D (\g_4) D (\g_3) D (\g_2)  D(\overline{\a}_4) D(\overline{\a}_3) D(\overline{\a}_2) D(\overline{\a}_1),$$ which coincides with the monodromy in Remark~\ref{rem: monod}.  \smallskip

Now we are ready to give a proof of Theorem~\ref{thm: main}.

\subsection{Proof of the main result} \label{subsec: pr} Suppose that $p > q \geq 1$ are coprime integers and let
$$\dfrac{p}{p-q}=[b_1, b_2, \ldots, b_k]$$ be the Hirzebruch-Jung continued fraction, where $b_i \geq 2$ for $1 \leq i \leq k$. We set $$\textbf{b}= (b_1, b_2, \ldots, b_k).$$

\begin{Def} \label{def: wd} {\em (The wiring diagrams $\mathcal{W}_\textbf{n}$, and $\mathcal{W}_\textbf{n}({\textbf{m}})$)} For any $$\textbf{n}= (n_1, n_2, \ldots, n_k) \in \mathcal{Z}_{k}(\textstyle{\frac{p}{p-q}})$$ let $(0) \to (1,1) \to \cdots \to \textbf{n}$ be a blowup sequence, and let $\textbf{m}= \textbf{b}-\textbf{n}$.  We denote by $\mathcal{W}_\textbf{n}$, the unbraided wiring diagram with $k$ wires $\{w_1, w_2, \ldots, w_{k}\}$ and $k-1$ marked points $x_{k-1}, x_{k-2}, \ldots, x_1$ (reading from left to right) constructed by applying the blowup algorithm in Section~\ref{sec: blowup} to the given blowup sequence. We denote by $\mathcal{W}_\textbf{n}({\textbf{m}})$ the extension of $\mathcal{W}_\textbf{n}$ to the left obtained by applying the twisting algorithm in Section~\ref{sec: twisting} based on the $k$-tuple $\textbf{m}$. Note that $\mathcal{W}_\textbf{n}({\textbf{m}})$ is obtained from $\mathcal{W}_\textbf{n}$ by inserting additional  marked points $$ \underbrace{y_{k},  \ldots,  y_{k}}_{m_k}, \underbrace{y_{k-1}, \ldots,  y_{k-1}}_{m_{k-1}}, \dots,  \underbrace{y_{1}, \ldots, y_{1}}_{m_1}, $$ reading from left to right. \end{Def}

\smallskip

\noindent {\bf Vanishing cycles associated to $\mathcal{W}_\textbf{n}({\textbf{m}})$:} Now we can apply the method of Plamenevskaya and  Starkston (see Section~\ref{sec: bwd}) to the wiring diagram $\mathcal{W}_\textbf{n}({\textbf{m}})$, to obtain the associated planar Lefschetz fibration by describing a set of ordered vanishing cycles on the disk $D_k$ with $k$ holes. According to their algorithm, there is a vanishing cycle associated to each marked point in  $\mathcal{W}_\textbf{n}({\textbf{m}})$. So,  for each $1 \leq t \leq  k-1$, there is  a vanishing cycle $V(x_t)$ associated to the marked point  $x_t$ in $\mathcal{W}_\textbf{n}({\textbf{m}})$, and   for each $1 \leq s \leq  k$, there is  a vanishing cycle $V(y_s)$ associated to the marked point  $y_s$ in $\mathcal{W}_\textbf{n}({\textbf{m}})$. Note that there are $k-1$ vanishing cycles associated to type $x$ marked points, and since each $y_t$ is repeated $m_t = b_t - n_t$ times, there are $$m_1+m_2+\cdots+m_k = (b_1-n_1)+(b_2-n_2)+ \cdots +(b_k-n_k)$$ vanishing cycles  in total associated to type $y$  marked points. \smallskip

\noindent {\bf Planar Lefschetz fibration $W_{p,q}(\textbf{n}) \to D^2$:}  Let  $W_{p,q}(\textbf{n})$ be the minimal symplectic filling of  $(L(p,q), \xi_{can})$ as in Section~\ref{sec: lens}. As we described in Section~\ref{sec: planar}, there is a planar Lefschetz fibration $W_{p,q}(\textbf{n}) \to D^2$ with fibre $D_k$,  which is obtained by applying the stabilization algorithm and the surgery algorithm. Note that there are $k-1$ vanishing cycles $\a_1, \a_2, \ldots, \a_{k-1}$ coming from the  stabilization algorithm and $$(b_1-n_1)+(b_2-n_2)+ \cdots +(b_k-n_k)$$ vanishing cycles  $$\{ \underbrace{\g_1, \ldots,\g_1}_{b_1-n_1}, \underbrace{\g_2, \ldots, \g_2}_{b_2-n_2}, \ldots, \underbrace{\g_k, \ldots \g_k}_{b_k -n_k}\}$$ coming from the  surgery algorithm. \smallskip

Theorem~\ref{thm: main} is in fact equivalent to  Proposition~\ref{prop: mirror} coupled with Lemma~\ref{lem: turn}.

\begin{Prop} \label{prop: mirror} Let $\mathcal{W}_\textbf{n}({\textbf{m}})$ be an unbraided wiring diagram with $k$ wires as described in Definition~\ref{def: wd}. Then
\begin{enumerate}[\rm(a)]
\item for any $1 \leq s \leq k$, the vanishing cycle $V(y_s)$ associated to the marked point $y_s \in \mathcal{W}_\textbf{n}({\textbf{m}})$  is isotopic to $\g_s$ in $D_k$, and
\item for any $1 \leq t \leq k-1$, the vanishing cycle $V(x_t)$ associated to the marked point $x_{t} \in \mathcal{W}_\textbf{n}({\textbf{m}})$ is isotopic to $\overline{\a}_{t}$ (the mirror image of $\a_t$) in $D_k$.
\end{enumerate}\end{Prop}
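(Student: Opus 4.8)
The plan is to prove both parts by induction on the length $k$ of the blowup sequence, establishing simultaneously a sharp geometric description of the curves $V(y_s)$ and $V(x_t)$ at each stage. The backbone is the interplay between the two algorithms: the $x$-type marked points encode exactly the stabilization data (the $\a$-curves), while the $y$-type marked points, inserted to the left via the twisting algorithm, encode the surgery data (the $\g$-curves). Because the marked points are read from left to right, Definition~\ref{def: cycle} tells us that $V(x_t)$ and $V(y_s)$ are obtained by applying the composition of counterclockwise half-twists $\D$ corresponding to \emph{all} marked points lying to the right of the point in question, starting from the convex curve $\d$ assigned to that point. So the computation of each vanishing cycle amounts to tracking how a convex curve is transported under a prescribed sequence of half-twists.

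For part (a), the key simplification is that the $y$-points sit to the left of all $x$-points and to the left of the $y$-points with smaller index. Thus $V(y_s)$ is $\D(x_1)\circ\cdots\circ\D(x_{k-1})\circ\D(y_2)\circ\cdots\circ\D(y_{s-1})$ applied to $\d(y_s)$. I would first show, as a lemma (the paper signals this as Lemma~\ref{lem: typex} in Section~\ref{subsec: gamma}), that the half-twists $\D(y_j)$ for $j<s$ have no effect on $\d(y_s)$, since $\d(y_s)=\g_s$ already encircles the holes $H_1,\dots,H_s$ while $\d(y_j)$ for $j<s$ encircles a subdisk that $\d(y_s)$ either contains or is disjoint from in a twist-invariant way. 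Then I must argue that the remaining half-twists $\D(x_1)\circ\cdots\circ\D(x_{k-1})$ also fix $\g_s$ up to isotopy. This is plausible because the $\g$-curves are precisely the convex curves preserved by the stabilization-induced configuration, but it is exactly the point needing care, and I expect it to follow from a geometric-position analysis analogous to the worked example (Figures~\ref{fig: ex5c}, \ref{fig: ex5}, \ref{fig: ex6}), where each $\g_s=\d(y_s)$ survives the full sequence of twists.

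For part (b), the vanishing cycle is $V(x_t)=\D(x_1)\circ\cdots\circ\D(x_{t-1})(\d(x_t))$, and the claim is that this equals the mirror image $\overline{\a}_t$ in the vertical $D_k$. Here the heart is the correspondence between the convex curve $\d(x_t)$ assigned by Definition~\ref{def: convex} and the stabilization curve $\a_t$: by construction of the blowup algorithm, the wire $w_{r+1}$ carrying $x_r$ simultaneously meets exactly the top-$i$ geometrically consecutive wires $\mathcal{W}_i$, so $\d(x_t)$ encircles precisely the holes matching the enclosure pattern of $\a_t$ after the $90^\circ$ rotation and reflection. I would set up the induction so that the inductive hypothesis records not only $V(x_t)=\overline{\a}_t$ but also the exact configuration of the curves relative to the holes, then verify that appending one more blowup (interior or exterior, treated by the parity cases of Lemma~\ref{lem: consec}) transports the new convex curve to the new mirror-stabilization curve. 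Lemma~\ref{lem: consec} is the essential geometric input: it guarantees that the relevant sets of wires are geometrically consecutive on the left, which is what makes each half-twist act as a clean half-twist along a convex subdisk rather than along a tangled curve.

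The main obstacle will be verifying that the half-twists associated to $x$-type points fix each $\g_s$ (the crux of part (a)) and, dually, correctly transport $\d(x_t)$ to $\overline{\a}_t$ through the preceding half-twists (the crux of part (b)); both reduce to showing that at every stage the curve being twisted and the twisting subdisk are in one of finitely many ``compatible'' relative positions (nested, disjoint, or meeting in the controlled consecutive pattern). I would organize this as a single induction producing a normal-form description of the full curve configuration after each blowup, so that the effect of the next half-twist is determined combinatorially from the positions guaranteed by Lemma~\ref{lem: consec}, rather than computed curve by curve. The payoff is that once the normal form is set up, both (a) and (b) fall out simultaneously, and Theorem~\ref{thm: main} then follows by combining Proposition~\ref{prop: mirror} with Lemma~\ref{lem: turn}, which accounts precisely for the order-reversal and mirror-image discrepancy between the two constructions.
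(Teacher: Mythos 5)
Your high-level skeleton (reduce via the nestedness of the $y$-type curves, induct on the blowup sequence, feed Lemma~\ref{lem: consec} in as the geometric input, and finish with Lemma~\ref{lem: turn}) matches the paper, and your Lemma~\ref{lem: typex}-style reduction for part (a) is correct. But the core of your part (a) rests on a false premise: you assert that $\d(y_s)=\g_s$ and that the half-twists $\D(x_1)\circ\cdots\circ\D(x_{k-1})$ fix $\g_s$ up to isotopy. Except for $s=k$ (and special diagrams such as pure exterior-blowup sequences), neither holds. By Definition~\ref{def: convex}, $\d(y_s)$ encloses the $s$ adjacent holes matching the left-hand-side geometric positions of the top-$s$ wires, and these are in general not the top $s$ holes; the paper's worked example makes this explicit, since Figures~\ref{fig: ex5c}, \ref{fig: ex5} and \ref{fig: ex6} show $\d(y_4),\d(y_3),\d(y_2)$ being genuinely transported to $\g_4,\g_3,\g_2$ by the twists, not left invariant. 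Moreover, at intermediate stages the image of $\g_s$ under the (inverse) twists is not convex at all: it is only one-sided convex, with its holes split into two ``lobes''. This is exactly why the paper proves the technical Proposition~\ref{prop: lobe}, whose invariants (L1)--(L3) (one-sided convexity, the parity rule for when the sidedness flips, and the lobe structure of the auxiliary arc system $\G_{s,r}^k$) constitute the normal form that survives the induction; your proposed normal form (``nested, disjoint, or meeting in the controlled consecutive pattern'') is too coarse to be preserved under the half-twists, and an induction based on it breaks at the first interior blowup.

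For part (b), your induction handles the old marked points (the splitting-of-holes promotion from $D_{k-1}$ to $D_k$, which is also what the paper does), but it cannot close at the \emph{new} marked point $x_{k-1}$ created by an interior blowup: that point has no counterpart in the $(k-1)$-wire diagram, and its vanishing cycle involves the full composition $\D(x_1)\circ\cdots\circ\D(x_{k-2})$, whose action on $\d(x_{k-1})$ is not controlled by the inductive hypothesis. The paper resolves precisely this case \emph{not} by induction but by invoking part (a), already proved, for the curve $\g_{i+1}$: writing $\overline{\a}_{k-1}=\D_{\b_{i+1}}(\g_{i+1})$ for a half-twist about the convex curve $\b_{i+1}$ enclosing the two split holes, and then using the conjugation identity $\Psi_{k-2}\circ\D_{\b_{i+1}}\circ\Psi_{k-2}^{-1}=\D_{\b_{\psi(i+2)}}$ together with the known image $\Psi_{k-2}(\g_{i+1})$ to conclude $\Psi_{k-2}(\overline{\a}_{k-1})=\d(x_{k-1})$. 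This logical dependence of part (b) on part (a), or some substitute for it, is missing from your outline, so the ``single simultaneous induction'' you propose does not go through as stated.
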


\smallskip

In the rest of Section~\ref{subsec: pr}, we will provide a proof of Proposition~\ref{prop: mirror}. In Section~\ref{subsec: gamma}, we will first formulate Proposition~\ref{prop: lobe} (a necessarily very technical result) and Lemma~\ref{lem: lobetomirror} will show that it implies Proposition~\ref{prop: mirror}(a). Then we will turn our attention to Proposition~\ref{prop: mirror}(b) in Section~\ref{subsec: alpha}, where we will formulate the result as Proposition~\ref{prop: alpha}.

\subsubsection{The case of $\g$-curves:}  \label{subsec: gamma} To prove our claim in Proposition~\ref{prop: mirror} (a), we will verify that for $1 \leq s \leq k$, the vanishing cycle $V(y_{s})$ is isotopic to the curve $\g_s$ in $D_k$. We begin with a simple but crucial observation.

\begin{Lem} \label{lem: typex} For any wiring diagram $\mathcal{W}_\textbf{n}({\textbf{m}})$ with $k$ wires as in Definition~\ref{def: wd}, and for any $1 \leq s \leq k$, we have $$V(y_s) = \D(x_1) \circ \cdots \circ \D(x_{k-1}) (\d (y_s)). $$
 \end{Lem}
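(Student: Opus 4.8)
The plan is to show that, in the formula of Definition~\ref{def: cycle} computing $V(y_s)$, every half-twist coming from a $y$-type marked point lying to the right of $y_s$ acts trivially on $\d(y_s)$, so that only the half-twists $\D(x_1),\ldots,\D(x_{k-1})$ coming from the $x$-type points survive. Reading $\mathcal{W}_\textbf{n}(\textbf{m})$ from left to right, the marked points to the right of a chosen copy of $y_s$ are the remaining copies of $y_s$, then the copies of $y_{s-1},\ldots,y_1$, and finally $x_{k-1},\ldots,x_1$. Hence, applying Definition~\ref{def: cycle} and grouping repeated twists, one has
$$V(y_s)=\D(x_1)\circ\cdots\circ\D(x_{k-1})\circ\D(y_1)^{m_1}\circ\cdots\circ\D(y_{s-1})^{m_{s-1}}\circ\D(y_s)^{\ell}(\d(y_s))$$
for some $\ell\geq 0$, and it suffices to prove that the composition of all the $\D(y_j)$-factors fixes $\d(y_s)$ up to isotopy.

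The main geometric step is to establish that the subdisks $R_1,\ldots,R_k$ of $D_k$ enclosed by the convex curves $\d(y_1),\ldots,\d(y_k)$ form a nested family $R_1\subset R_2\subset\cdots\subset R_k$. By Definition~\ref{def: convex}, the curve $\d(y_j)$ encircles exactly the block of holes whose positions from the top agree with the positions occupied by the wires of $\mathcal{W}_j$ at the marked point $y_j$; since only the set of occupied positions (not the order within it) determines which holes are encircled, it is enough to control the \emph{interval} $I_j$ of positions that $\mathcal{W}_j$ occupies at $y_j$. The key observation is that a simultaneous twist of a geometrically consecutive block of wires reverses only their internal order and leaves the interval of positions they occupy unchanged. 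Reading from $\mathcal{W}_\textbf{n}$ leftward to $y_j$, the only twists encountered are those building $y_1,\ldots,y_{j-1}$, each of which twists a set $\mathcal{W}_i$ with $i<j$ and hence $\mathcal{W}_i\subset\mathcal{W}_j$; these permute wires \emph{inside} $\mathcal{W}_j$ and therefore preserve $I_j$. Thus $I_j$ coincides with the interval occupied by $\mathcal{W}_j$ at the left edge of $\mathcal{W}_\textbf{n}$, where, by Lemma~\ref{lem: consec}, $\mathcal{W}_j$ is geometrically consecutive. Since $\mathcal{W}_1\subset\mathcal{W}_2\subset\cdots\subset\mathcal{W}_k$ is a nested chain of consecutive wire-sets (each containing $w_1$), the intervals $I_1\subseteq I_2\subseteq\cdots\subseteq I_k$ are nested, and consequently the convex subdisks satisfy $R_1\subset R_2\subset\cdots\subset R_k$, with $R_j$ contained in the interior of $R_s$ whenever $j<s$.

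Granting the nesting, the conclusion is immediate: for $j<s$ the half-twist $\D(y_j)$ is supported in $R_j\subset\operatorname{int}R_s$, so it fixes $\d(y_s)=\partial R_s$, while $\D(y_s)$ fixes $\d(y_s)$ because it is the half-twist along $R_s$ itself. Hence every power $\D(y_j)^{m_j}$ with $j\leq s$ fixes $\d(y_s)$, the entire $y$-part of the composition drops out, and we are left with $V(y_s)=\D(x_1)\circ\cdots\circ\D(x_{k-1})(\d(y_s))$, as claimed. In particular the value is independent of which copy of $y_s$ is chosen.

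I expect the principal obstacle to be the careful bookkeeping in the middle paragraph: namely, verifying that the hole-block assigned to $y_j$ is genuinely the interval of positions occupied by $\mathcal{W}_j$ at the left edge of $\mathcal{W}_\textbf{n}$, unaffected by the intervening twists. This is exactly where Lemma~\ref{lem: consec} (consecutiveness of the sets $\mathcal{W}_j$) and the invariance of the occupied interval under a simultaneous twist do the real work. The remaining topological input---that a half-twist supported in an inner subdisk fixes an enclosing convex curve---is standard and requires no further argument.
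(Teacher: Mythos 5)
Your proof is correct and takes essentially the same route as the paper: both reduce $V(y_s)$ via Definition~\ref{def: cycle} to showing that all the $y$-type half-twists act trivially on $\d(y_s)$, which holds because the convex curves $\d(y_1),\ldots,\d(y_k)$ are nested. The only difference is one of detail, not of method: the paper asserts the nesting in a single clause (``due to the construction and the order of the type $y$ marked points''), whereas you justify it explicitly via Lemma~\ref{lem: consec} and the invariance of the occupied position-interval under simultaneous twists, and you also handle the repeated copies of $y_s$ (the exponent $\ell$) which the paper's displayed formula glosses over.
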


\begin{proof} By Definition~\ref{def: cycle} we have
$$V(y_s) = \D(x_1) \circ \cdots \circ \D(x_{k-1}) \circ (\D(y_{1})) ^{b_1-n_1} \circ \cdots \circ  (\D(y_{s-1})) ^{b_{s-1}-n_{s-1}} (\d (y_s)). $$  But $$ (\D(y_{1})) ^{b_1-n_1} \circ \cdots \circ  (\D(y_{s-1})) ^{b_{s-1}-n_{s-1}} (\d (y_s)) =  \d (y_s),$$ since the convex curves associated to the type $y$ marked points are nested, due to the construction and the order of the type $y$ marked points in the wiring diagram. \end{proof}

Therefore, to prove our claim in Proposition~\ref{prop: mirror} (a), for each $1 \leq s \leq k$, we need to verify that $$  \D(x_{1}) \circ \cdots \circ \D(x_{k-1}) (\d(y_s))   =  \g_s$$ by Definition~\ref{def: cycle} and Lemma~\ref{lem: typex}. Equivalently, we need to verify that for each $1 \leq s \leq k$,
$$(\D(x_{k-1}))^{-1} \circ \cdots \circ (\D(x_{1}))^{-1} ( \g_s) = \d(y_s).$$  For technical reasons, we will prove a more refined
statement in Proposition~\ref{prop: lobe} from which our claim  will follow by Lemma~\ref{lem: lobetomirror}. Before giving the statement we make the following definition.

\begin{Def} {\em (Right/Left-convexity)} A curve in a disk with holes enclosing two distinct sets of adjacent holes as illustrated in Figure~\ref{fig: convex} is called right-convex, and the mirror image of a right-convex curve is called left-convex.  By definition any convex curve enclosing a set of adjacent  holes is both right-convex and left-convex. \end{Def}

 \begin{figure}[ht]  \relabelbox \small {\epsfxsize=1.1in
\centerline{\epsfbox{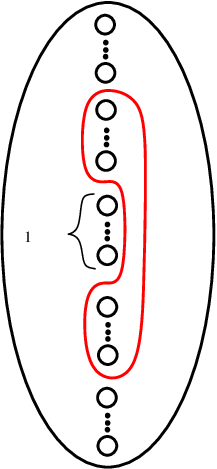}}}\relabel{1}{$0 \leq $}
 \endrelabelbox
\caption{A right-convex curve in a disk with holes.}\label{fig: convex} \end{figure}

{\bf Notation for the rest of the paper:} During the proof, it will be convenient to keep track of the number of wires in our wiring diagrams when talking about marked
points. Thus we will write $x_i^k$ (resp. $y_j^k$) when talking about the marked point $x_i$ (resp. $y_j$) in a wiring diagram with $k$ wires. Although this decoration will make the notation cumbersome, it is necessary for the accuracy of the arguments, but the reader can safely ignore this superscript for the most part in the text below.
Similarly, when talking about curves and half-twists in a disk with holes, it will be convenient to keep track of the
number of holes. For example, we will write $\g_s^k$  when talking about the convex curve $\g_s$
 in the disk $D_k$ with $k$ holes. Moreover, the counterclockwise half-twist $\D(x_i)$ in $D_k$ will be abbreviated by $\D^k_i$ and its inverse, the clockwise half-twist, by $(\D^k_i)^{-1}$. (Fortunately, we will not need to use $\D(y_j)$ in our discussion below by Lemma~\ref{lem: typex}, and hence the notation $\D^k_i$ will not lead to any confusion.) Furthermore,  we will denote the $i$th
hole (with respect to the geometric order from top to bottom) in $D_k$ by $H_i^k$. We will also need  the following definitions.

\begin{Def} \label{def: red} For $2\leq s\leq k$, we denote by $\G_s^k$ the collection of red arcs in $D_k$ shown in Figure \ref{fig: figa}, where we set $\G_1^k := \emptyset$. \end{Def}

 \begin{figure}[ht]  \relabelbox \small {\epsfxsize=1.8in
\centerline{\epsfbox{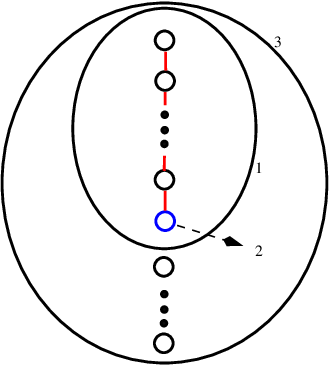}}}\relabel{1}{$\g^k_{s}$}  \relabel{2}{$H^k_s$}  \relabel{3}{$D_k$}
 \endrelabelbox
\caption{ $\G_s^k$ is the collection of red arcs. }\label{fig: figa} \end{figure}

Fix any wiring diagram $\mathcal{W}_\textbf{n}$ with $k$ wires and $k-1$
marked points $x^k_{k-1}, \ldots, x^k_{1}$ (reading from left to right), as in Definition~\ref{def: wd}.

\begin{Def} \label{def: arcs}  For  $2 \leq s\leq k$, let $\rho_s^k$ be the smallest $t \in \{1, \ldots, k-1\}$ such that the convex curve  $\d (x_{t}^k) \subset D_k$ assigned  to $x_{t}^k$ contains
$H_s^k$. For $1\leq s\leq k$ and  $1\leq r\leq k-1$, we define
\begin{align*}
\g_{s,r}^k&:=(\D_r^k)^{-1}\circ(\D_{r-1}^k)^{-1}\circ\cdots\circ(\D_1^k)^{-1}(\g_s^k),  \\
\G_{s,r}^k&:=\begin{cases}(\D_r^k)^{-1}\circ(\D_{r-1}^k)^{-1}\circ\cdots\circ(\D_{\rho_s^k}^k)^{-1}(\G_s^k) & \text{if } \; s \geq 2\;  \text{and } \;r\geq\rho_s^k \\
\G_s^k & \text{otherwise,} \end{cases}
\end{align*}
and set $\g_{s,0}^k:=\g_s^k$ and $\G_{s,0}^k:=\G_s^k$.
 \end{Def}

 Note that $\G_{1,r}^k =\emptyset$ for any $0 \leq r \leq k-1$, by Definition~\ref{def: arcs}.

 \begin{Def} \label{def: psi} We abbreviate $$\Psi_r^k:=(\D_r^k)^{-1}\circ(\D_{r-1}^k)^{-1}\circ\cdots\circ(\D_1^k)^{-1}$$ for $r\geq 1$ and set
$\Psi_0^k=\id$. \end{Def}

\begin{Prop} \label{prop: lobe} Fix any wiring diagram $\mathcal{W}_\textbf{n}$ with $k$ wires and $k-1$
marked points $x^k_{k-1}$, $\ldots,$ $x^k_{1}$ (reading from left to right),   as in Definition~\ref{def: wd}. Then the following three statements hold for any $1\leq s\leq k$, and $1\leq r\leq k-1$:
\begin{enumerate}[\rm(L1)]
\item The curve $\g_{s,r}^k$ is left or right-convex.
\item For $r=\rho_s^k$, the curve $\g_{s,r}^k$ is right-convex, and for $r>\rho_s^k$, if $x_r^k$ contains $\Psi_{r-1}^k(H_s^k)$, then $\g_{s,r}^k$ is left (resp. right) convex if $\g_{s,r-1}^k$ is right (resp. left) convex, otherwise $\g_{s,r}^k$ retains any one-sided convexity of $\g_{s,r-1}^k$, possibly also gaining the other sided convexity.
\item If $s \geq 2$ and $r\geq\rho_s^k$, then $\G_{s,r}^k$ has one of the two forms shown in Figure \ref{fig: figb}.
In particular, the holes enclosed by $\g_{s,r}^k$ can be split into two collections of adjacent holes, which we will
call ``lobes" with the lobe containing $\Psi_r^k(H_s^k)$ being called the ``primary lobe" and the other lobe the ``secondary lobe".
We require $\Psi_r^k(H_s^k)$ to be the innermost hole of the primary lobe and the
intersection of $\G_{s,r}^k$ with some half-plane containing the primary lobe to have precisely one of the two forms illustrated in Figure \ref{fig: figb}. 
\end{enumerate} \end{Prop}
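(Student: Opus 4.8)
The plan is to prove Proposition~\ref{prop: lobe} by induction on the number of half-twists applied, i.e.\ on the index $r$, keeping $s$ fixed, and tracking all three statements (L1)--(L3) simultaneously as a single inductive package. The base case $r=0$ is immediate: $\g_{s,0}^k=\g_s^k$ is convex (hence both left- and right-convex), $\G_{s,0}^k=\G_s^k$ has the required form by Definition~\ref{def: red}, and $\Psi_0^k=\id$, so there is nothing to twist. For the inductive step, I would assume (L1)--(L3) hold for $\g_{s,r-1}^k$ and $\G_{s,r-1}^k$, and analyze the effect of applying the single clockwise half-twist $(\D_r^k)^{-1}$, which is a half-twist along the subdisk bounded by the convex curve $\d(x_r^k)$.

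The key mechanism is a \emph{local} case analysis of how a clockwise half-twist acts on a left- or right-convex curve. There are essentially two cases, governed precisely by the criterion in (L2): either the convex curve $\d(x_r^k)$ assigned to $x_r^k$ contains the distinguished hole $\Psi_{r-1}^k(H_s^k)$, or it does not. First I would show that if $\d(x_r^k)$ does \emph{not} contain $\Psi_{r-1}^k(H_s^k)$, then the half-twist either leaves $\g_{s,r-1}^k$ essentially unchanged up to isotopy or slides it rigidly, preserving the sidedness of convexity---this is the ``only if'' half of (L2). The more delicate case is when $\d(x_r^k)$ \emph{does} contain $\Psi_{r-1}^k(H_s^k)$: here the half-twist flips the curve through the twisting region, converting a right-convex curve into a left-convex one (or vice versa), which establishes the sidedness reversal in (L2). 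Throughout, the auxiliary red arcs $\G_{s,r}^k$ serve as bookkeeping: they record how the secondary lobe is attached to the primary lobe, and checking that they retain one of the two forms in Figure~\ref{fig: figb} is exactly what forces the half-twist to produce again a left- or right-convex curve, closing the induction on (L1) and (L3) together. The requirement that $\Psi_r^k(H_s^k)$ be the innermost hole of the primary lobe must be verified to persist under the twist, using that $\Psi_r^k = (\D_r^k)^{-1}\circ \Psi_{r-1}^k$ by Definition~\ref{def: psi}.

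I expect the main obstacle to be the case analysis governing whether $\d(x_r^k)$ contains $\Psi_{r-1}^k(H_s^k)$ and, correspondingly, how the primary and secondary lobes interact with the twisting subdisk. The subtlety is that the half-twist acts not on the original holes but on their images under the accumulated homeomorphism $\Psi_{r-1}^k$, so one must carefully track the positions of these images relative to the fixed convex curve $\d(x_r^k)$. In particular, verifying that $\G_{s,r}^k$ remains in one of the two prescribed forms requires understanding precisely which holes the twisting region sweeps across, and confirming that the arcs are neither broken nor forced into a third configuration. Because the wiring diagram $\mathcal{W}_\textbf{n}$ was built by the blowup algorithm, I would lean on the consecutivity structure guaranteed by Lemma~\ref{lem: consec}, together with the definition of $\rho_s^k$ as the smallest index $t$ for which $\d(x_t^k)$ contains $H_s^k$, to ensure that once twisting begins to affect $\G_s^k$ (i.e.\ once $r\geq\rho_s^k$) the lobes are genuinely separated and the diagrammatic form of Figure~\ref{fig: figb} is maintained at every subsequent step.

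Finally, I would organize the write-up so that the two cases of (L2) drive everything: in the sidedness-preserving case, (L1) and (L3) follow because the curve and its red arcs are merely transported; in the sidedness-reversing case, the heart of the argument is a direct geometric verification that applying the clockwise half-twist to a curve of one form in Figure~\ref{fig: figb} yields a curve of the other form, with $\Psi_r^k(H_s^k)$ becoming the new innermost hole of the primary lobe. The $s=1$ statement is trivial throughout since $\G_{1,r}^k=\emptyset$ and $\g_1^k$ encloses only $H_1^k$, so I would dispatch it separately at the outset and then assume $s\geq 2$ for the substantive analysis.
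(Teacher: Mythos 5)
Your proposed induction is on the wrong parameter, and this creates a genuine gap that cannot be repaired from within your setup. You fix the wiring diagram and induct on $r$, assuming (L1)--(L3) for $\g_{s,r-1}^k$ and $\G_{s,r-1}^k$ and then analyzing the effect of the single half-twist $(\D_r^k)^{-1}$. But the inductive step cannot be closed this way: knowing that $\g_{s,r-1}^k$ is one-sided convex with lobes as in Figure~\ref{fig: figb} does \emph{not} constrain how the twisting curve $\d(x_r^k)$ sits relative to those lobes. If $\d(x_r^k)$ enclosed, say, only part of the secondary lobe, or some holes of the primary lobe other than the innermost one, the half-twist would produce a curve with three or more ``lobes'' and (L1) would fail. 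Your claim that in the sidedness-preserving case the twist ``leaves $\g_{s,r-1}^k$ essentially unchanged up to isotopy or slides it rigidly'' is simply false without ruling out such configurations, and nothing in (L1)--(L3) at step $r-1$ rules them out. The needed constraint on $\d(x_r^k)$ is a global consequence of the blowup algorithm, and Lemma~\ref{lem: consec}, which you invoke for this purpose, only controls the geometric ordering of wires at the far left and right of the diagram; it says nothing about how the image holes $\Psi_{r-1}^k(H_j^k)$ are distributed relative to an intermediate convex curve $\d(x_r^k)$.

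The paper resolves exactly this difficulty by inducting on the number of wires $k$ instead: one deletes the last wire $w_k$ (and marked point $x_{k-1}^k$), assumes (L1)--(L3) hold for the resulting $(k-1)$-wire diagram \emph{for all} $r$, and transfers curves between $D_{k-1}$ and $D_k$ by splitting the hole $\Psi_p^{k-1}(H_{i+1}^{k-1})$ into two adjacent holes. The point is that the bad configurations of $\d(x_{p+1}^{k-1})$ are eliminated not by local analysis but by the induction hypothesis applied to the smaller diagram \emph{at the next step} $p+1$: e.g., if $\d(x_{p+1}^{k-1})$ enclosed a hole below the primary lobe but not the whole secondary lobe, then $\g_{s-1,p+1}^{k-1}$ would fail to be right-convex, contradicting the hypothesis that the $(k-1)$-wire diagram already satisfies (L1) at step $p+1$. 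This is precisely the information your single induction on $r$ does not have access to --- in your scheme the conclusion at step $r$ is what you are trying to prove, so using it to constrain $\d(x_r^k)$ would be circular. (The paper does run an inner induction on $r$, but only nested inside the outer induction on $k$, and the outer hypothesis is what drives the case analysis; the final step $r=k-1$ additionally needs Lemma~\ref{lem: consec} and a separate treatment of the cases $s\leq i$, $s=i+1$, $s\geq i+2$ relative to the blowup index $i$, none of which appears in your outline.)
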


 \begin{figure}[ht]  \relabelbox \small {\epsfxsize=5in
\centerline{\epsfbox{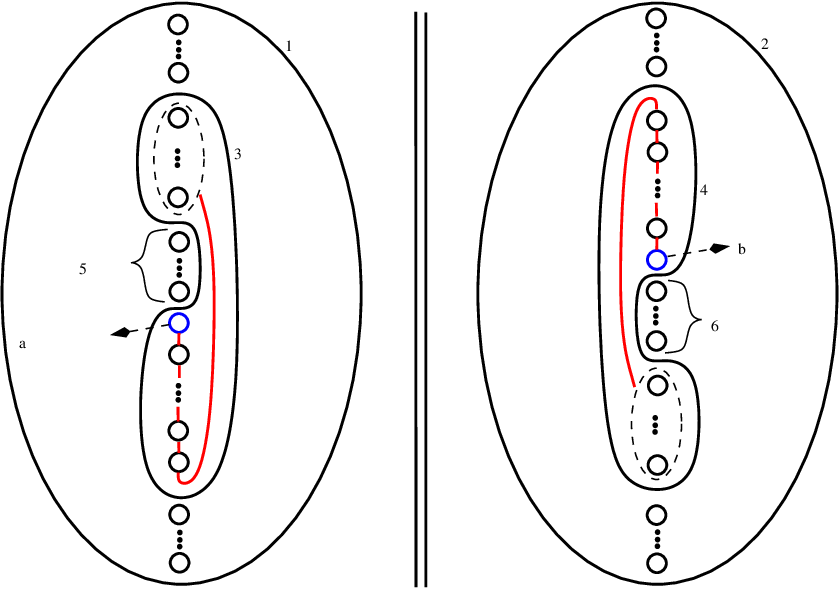}}} \relabel{1}{$D_k$}  \relabel{2}{$D_k$} \relabel{3}{$\g_{s,r}^k$} \relabel{4}{$\g_{s,r}^k$} \relabel{5}{$0 \leq $} \relabel{6}{$\geq 0$} \relabel{a}{$\Psi_r^k(H_s^k)$} \relabel{b}{$\Psi_r^k(H_s^k)$}
 \endrelabelbox
\caption{$\G_{s,r}^k$ is the collection of red arcs in both forms in (L3).}\label{fig: figb} \end{figure}

\begin{Lem} \label{lem: lobetomirror} Proposition~\ref{prop: lobe} implies Proposition~\ref{prop: mirror} {\em (a)}. \end{Lem}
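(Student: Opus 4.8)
The plan is to exploit the reduction already set up just before Definition~\ref{def: red}: by Lemma~\ref{lem: typex} we have $V(y_s)=\D_1^k\circ\cdots\circ\D_{k-1}^k(\d(y_s))$, and since $\Psi_{k-1}^k=(\D_1^k\circ\cdots\circ\D_{k-1}^k)^{-1}$, proving Proposition~\ref{prop: mirror}(a) is equivalent to verifying, for every $1\le s\le k$, the single identity
$$\g_{s,k-1}^k=\Psi_{k-1}^k(\g_s^k)=\d(y_s)$$
inside $D_k$. Thus Lemma~\ref{lem: lobetomirror} amounts to extracting the terminal case $r=k-1$ of Proposition~\ref{prop: lobe} and recognizing the resulting curve as the convex curve assigned to $y_s$. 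Concretely, two things must be checked: that $\g_{s,k-1}^k$ is \emph{genuinely convex}, and that the holes it encloses are \emph{exactly} those encircled by $\d(y_s)$; since a convex curve is determined up to isotopy by the block of adjacent holes it bounds, these two checks suffice.

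First I would dispose of the case $s=1$. Here $\G_{1,r}^k=\emptyset$ for every $r$ (Definition~\ref{def: red} and the remark following Definition~\ref{def: arcs}), so by (L3) the curve never acquires a secondary lobe; combined with (L1) this forces $\g_{1,r}^k$ to be, at each stage, a convex curve encircling the single hole $\Psi_r^k(H_1^k)$. At $r=k-1$ it therefore encircles $\Psi_{k-1}^k(H_1^k)$, and it remains only to check by a direct position count that this is the hole encircled by $\d(y_1)$.

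The core is the case $s\ge 2$, where I would show that at the terminal stage $r=k-1$ the secondary lobe of $\g_{s,k-1}^k$ is empty, so that the left/right-convexity furnished by (L1) upgrades to honest convexity, the count of holes in the secondary lobe (the ``$0\le$'' quantity appearing in Figures~\ref{fig: convex} and~\ref{fig: figb}) having collapsed to zero. Granting this, (L3) tells me that $\g_{s,k-1}^k$ is the convex curve enclosing one block of $s$ adjacent holes whose innermost hole is $\Psi_{k-1}^k(H_s^k)$. The final task is to match this block with the holes enclosed by $\d(y_s)$. The conceptual point is that each $\D(x_t)$ realizes on the holes the order reversal produced by the simultaneous crossing at $x_t$, so that $\Psi_{k-1}^k$ realizes exactly the permutation carrying the geometric order of the wires on the right-hand side of $\mathcal{W}_\textbf{n}$ to their order on the left-hand side; since $\g_s^k$ encloses the top $s$ holes, which correspond to the wires of $\mathcal{W}_s$, its image $\g_{s,k-1}^k$ encloses the holes in the positions occupied by $\mathcal{W}_s$ on the left. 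By Lemma~\ref{lem: consec} those positions form a consecutive block, and by Definition~\ref{def: convex} this block is precisely what $\d(y_s)$ encircles, which closes the argument and yields $V(y_s)=\g_s$ for all $s$.

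I expect the genuinely delicate part to be the bookkeeping behind the two claims used above: certifying that the secondary lobe has vanished by $r=k-1$, and pinning down $\Psi_{k-1}^k(H_s^k)$ as the correct innermost hole of the surviving lobe. Both ride on the parity-and-ordering data recorded in Lemma~\ref{lem: consec} (namely that a wire carrying an even, respectively odd, number of marked points sits above, respectively below, the wires preceding it on the left) fed through the flip criterion (L2). I would therefore organize the verification around tracking, step by step in $r$, how each half-twist $(\D_r^k)^{-1}$ moves $\Psi_{r-1}^k(H_s^k)$ relative to the convex curve assigned to $x_r^k$, using (L2) to control when the sidedness changes and (L3) to keep the red arcs in one of the two admissible forms until, at $r=k-1$, the secondary lobe is forced to close up.
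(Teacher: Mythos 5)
Your proposal assembles exactly the ingredients of the paper's proof---the reduction via Lemma~\ref{lem: typex}, the observation that the half-twists in $\Psi_{k-1}^k$ realize on the holes precisely the right-to-left reordering of the wires (the paper phrases this via a colouring of wires and holes that is preserved step by step), Lemma~\ref{lem: consec} giving consecutiveness of the relevant block of holes on the left-hand side, and the upgrade from one-sided convexity to genuine convexity---but you have inverted the logical order in a way that manufactures work you then leave undone. You make the core case $s\geq 2$ rest on the claim that the secondary lobe of $\g_{s,k-1}^k$ is empty, introduce everything after it with ``granting this,'' and defer the claim itself to ``delicate bookkeeping'' via an induction on $r$ using (L2) and (L3). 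The paper never needs that step. It argues in the opposite order: first, the colour/permutation argument together with Lemma~\ref{lem: consec} shows that the set of holes enclosed by $\g_{s,k-1}^k$ (the image under $\Psi_{k-1}^k$ of the top $s$ holes) is a single block of \emph{adjacent} holes, namely exactly the block encircled by $\d(y^k_s)$; then (L1) alone finishes, since a right- or left-convex curve whose enclosed holes form one adjacent block is necessarily convex, and two convex curves enclosing the same adjacent holes are isotopic. In particular, inside this lemma no appeal to (L2), to (L3), to the position of the innermost hole $\Psi_{k-1}^k(H_s^k)$, or to any induction on $r$ is required---all of that machinery belongs to the proof of Proposition~\ref{prop: lobe} itself, not to the deduction of Proposition~\ref{prop: mirror}(a) from it. (For the same reason your separate treatment of $s=1$ is unnecessary; the argument is uniform in $s$.) So your write-up is rescued by its own final paragraph: the permutation-plus-consecutiveness argument you give there already implies the claim you postponed, and once you reorder the deductions the ``genuinely delicate part'' you flag evaporates. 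As literally written, however, the proposal leaves its pivotal claim unproved and proposes to establish it by a substantially harder route than the one the paper takes.
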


\begin{proof} Lemma~\ref{lem: consec} implies that, for each $1\leq s\leq k$, the top $s$ wires according to their geometric order on the right-hand side of a wiring diagram as described  in Definition~\ref{def: wd} will be {\em consecutive} (perhaps with a different geometric order) on the left-hand side as well. Therefore,  by definition, the convex curve $\d(y^k_s)$ encloses the set of adjacent holes in $D_k$ each of whose order is the same as the local geometric order of one of these $s$ wires on the left-hand side of the diagram.

On the other hand, by definition, the convex curve $\g_s^k$ encloses the top $s$ holes in $D_k$ and the set of images of these holes under $(\D_{k-1}^k)^{-1}  \circ \cdots\circ(\D_1^k)^{-1}$ will be the same as the set of adjacent holes enclosed by $\d(y^k_s)$. To see this, imagine that each wire has a colour and that each hole in the initial copy of $D_k$ has a colour so that the $i$th hole from the top has the same colour as the $i$th wire from the top on the right hand side.  As the wires move from right to left, they will be locally reordered each time a marked points appears in the diagram. Similarly, the clockwise half-twist corresponding to that marked point will reorder the holes on the disk $D_k$. We set up our algorithm so that at each step the colour of each wire remains the same as the  colour of the corresponding hole.

Moreover, for each $1\leq s\leq k$, we know by Proposition~\ref{prop: lobe} that the curve
$$\g_{s,k-1}^k:=(\D_{k-1}^k)^{-1}  \circ \cdots\circ(\D_1^k)^{-1}(\g_s^k)$$ is right or left-convex, but since it encloses a set of {\em adjacent} holes, it must be {\em convex}. Therefore we conclude that for each $1\leq s\leq k$, the convex curve $\g_{s,k-1}^k$ is isotopic to the convex curve  $\d(y^k_s)$.  \end{proof}

\begin{proof}[Proof of Proposition~\ref{prop: lobe}.]  We will prove Proposition~\ref{prop: lobe} by induction on the number of wires in the wiring diagram.
These three statements are vacuously true for a wiring diagram with one wire and no marked points. Now suppose that $k\geq 2$
and these statements hold for any wiring diagram constructed using the blowup algorithm above with $k-1$ wires and
$k-2$ marked points. We will prove that they hold for any wiring diagram with $k$ wires and $k-1$
marked points $x^k_{k-1}, \ldots, x^k_{1}$ (reading from left to right), constructed as in Definition~\ref{def: wd}. Our induction argument naturally splits into several cases. \smallskip

{\bf \underline{Case I (Exterior blowup)}:} This is the easiest case. Suppose that the last wire $w_k$ is inserted into the diagram as a consequence of an exterior blowup so that $w_k$ lies {\em below} all the wires and has no ``interaction" with the other wires. Recall that $w_k$ carries a free marked point $x^k_{k-1}$ which is placed geometrically to the left of all the previous marked points in the diagram.

Consider a fixed embedding of $D_{k-1} \subset D_k$, where $D_{k-1}$ includes the top $k-1$ holes in $D_k$. In other words $D_k$ is obtained from   $D_{k-1}$ by inserting an extra hole, named  $H^k_k$ by our conventions, at the bottom. Under this embedding, for any $1 \leq t \leq k-2$, the convex curve $\d(x^{k-1}_{t})$ in $D_{k-1}$  can be identified with the convex curve $\d(x^{k}_{t})$ in $D_k$, since $x^{k}_{t} = x^{k-1}_{t}$ in the new diagram. Similarly, $\D^k_{t} = \D^{k-1}_{t}$ for any $1 \leq t \leq k-2$, under this embedding and hence it follows that for $1\leq s\leq k-1$ and $1\leq r\leq k-2$ we have $\g_{s,r}^k = \g_{s,r}^{k-1}$ and $\G_{s,r}^k = \G_{s,r}^{k-1}$, which proves by induction,  that statements (L1), (L2) and (L3) hold for the new wiring diagram with $k$ wires, for these cases.

Note that the convex curve $\d(x^k_{k-1})$ is the curve that encloses the last hole $H^k_k \subset D_k$, by definition. Therefore, for  $1\leq s\leq k-1$, the clockwise half-twist $(\D_{k-1}^k)^{-1}$  has no effect on the \emph{convex} curve $\g_{s,k-2}^k$ nor on the collection of arcs $\G_{s,k-2}^k$. Hence statements (L1), (L2) and (L3) hold for $1\leq s\leq k-1$ and $r=k-1$ as well in the new wiring diagram with $k$ wires.

Finally, we observe that $\g_{k,r}^k = \g_{k}^k$ is convex for each $r$, hence (L1) and (L2) automatically hold for $s=k$. Also, $\rho_k^k=k-1$ and
$\G_{k,k-1}^k = (\D_{k-1}^k)^{-1}(\G_k^k)$ has the form shown in Figure \ref{fig: figc}, thus (L3) also holds for $s=k$.  \smallskip

 \begin{figure}[ht]  \relabelbox \small {\epsfxsize=1.5in
\centerline{\epsfbox{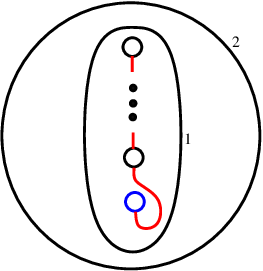}}} \relabel{1}{$\g_{k,k-1}^k$}  \relabel{2}{$D_k$}
 \endrelabelbox
\caption{$\G_{k,k-1}^k$ is the collection of red arcs.}\label{fig: figc} \end{figure}

{\bf \underline{Case II (Interior blowup)}:} Suppose that the last wire $w_k$ is introduced into the diagram as a consequence of an interior blowup at the $i$th term so that $w_k$ is initially right below the $(i+1)$st wire with respect to the {\em geometric ordering} of the wires on the right-hand side of the diagram. Suppose that this $(i+1)$st wire is $w_j$.
Now imagine that we take a step back in our blowup algorithm. In other words, we delete the last wire $w_k$ (and the last marked point $x^k_{k-1}$ and the associated last twisting) from the diagram. At the same time we remove the {\em corresponding hole} from $D_k$ as follows. First we remove the  $(i+2)$nd hole from $D_k$ to obtain the rightmost copy of $D_{k-1}$. As we move from right to left in the wiring  diagram, every time we pass through a marked point, we have a new copy of $D_{k-1}$ obtained by removing from $D_k$ the hole whose order is the same as the local geometric order of the wire $w_k$. During this process,  we will produce several copies of $D_{k-1}$. By reading from right to left, however, all these copies of $D_{k-1}$ can of course be identified with the rightmost copy of $D_{k-1}$ and we use this observation in our induction argument below,
 where we proceed according to three possible cases. \smallskip

{\bf \underline{Case II.A (Interior blowup, $1\leq s \leq i$)}:} Suppose that $1\leq s \leq i$. By hypothesis, $\g_{s,r}^{k-1}$ and  $\G_{s,r}^{k-1}$ satisfy statements (L1), (L2) and (L3) in $D_{k-1}$ for $1 \leq r \leq k-2$. Now since $\g^{k-1}_s$ does not contain the hole $H^{k-1}_{i+1}$, by the assumption that $1\leq s \leq i$, the image $\Psi_{r-1}^{k-1}(H_{i+1}^{k-1})$ is not contained in $\g_{s,r}^{k-1}$ for $1 \leq r \leq k-2$. Therefore, we can insert back the hole we deleted by splitting the image $\Psi_{r-1}^{k-1}(H_{i+1}^{k-1})$ into two adjacent holes. The new hole will be inserted right below $H^{k-1}_{i+1}$ in the rightmost copy of $D_{k-1}$ and it will be inserted right below or above $\Psi_{r-1}^{k-1}(H_{i+1}^{k-1})$ in an alternating fashion every time we pass a marked point  that belongs to the intersection $w_j \cap w_k$. As a result, the superscript $k-1$ can be promoted to $k$, meaning that the curve $\g_{s,r}^{k-1}$ can be viewed as $\g_{s,r}^{k}$ and  $\G_{s,r}^{k-1}$ can be viewed as $\G_{s,r}^{k}$, since we have not modified them by the insertion of the new hole. Hence $\g_{s,r}^{k}$ and $\G_{s,r}^{k}$ satisfy the statements (L1), (L2) and (L3) on $D_{k}$, for $1 \leq r \leq k-2$, as well.

To finish the proof of this case, we only need to argue that $\g_{s,k-1}^{k}$ and $\G_{s,k-1}^{k}$ satisfy the statements (L1), (L2) and (L3). But by the discussion
above,   $\g_{s,k-2}^{k}$ is right or left-convex and it belongs to the subdisk in $D_k$  along which we apply $(\D^k_{k-1})^{-1}$  corresponding to the new marked point $x_{k-1}$, by  Lemma~\ref{lem: consec}.  It follows that $ \g_{s,k-1}^{k} = (\D^{k}_{k-1})^{-1}(\g_{s,k-2}^{k})$ and  $\G_{s,k-1}^{k} = (\D^{k}_{k-1})^{-1}(\G_{s,k-2}^{k})$ satisfy  (L1), (L2) and (L3) as well.  \smallskip

{\bf \underline{Case II.B (Interior blowup, $i+2 \leq s \leq k$)}:} Suppose that  $i+2 \leq s \leq k$.  We check that $\g_{s,r}^k$ and $\G_{s,r}^k$ satisfy
statements (L1), (L2) and (L3) for $1\leq r\leq k-1$. We will proceed by induction on $r$. In the case $r<\rho_s^k$,
the statements are trivial since $\g_{s,r}^k=\g_s^k$ and $\G_{s,r}^k=\G_s^k$ in this case, by Definition~\ref{def: arcs}. If $r=\rho_s^k$, then, after applying the clockwise half-twist $(\D_r^k)^{-1}$ to $\g_{s,r-1}^k=\g_s^k$ and $\G_{s,r-1}^k=\G_s^k$, we see that $\g_{s,r}^k$ and
$\G_{s,r}^k$ have the form given in Figure \ref{fig: figd}. Thus statements (L1), (L2) and (L3) hold in this case also.

 \begin{figure}[ht]  \relabelbox \small {\epsfxsize=2in
\centerline{\epsfbox{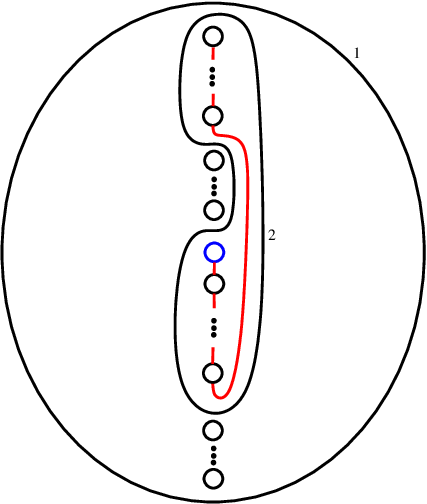}}}  \relabel{2}{$\g_{s,\rho_s^k}^k$}  \relabel{1}{$D_k$}
 \endrelabelbox
\caption{$\G_{s,\rho_s^k}^k$ is the collection of red arcs. }\label{fig: figd} \end{figure}

Now suppose that statements (L1), (L2) and (L3) hold for $k-3\geq r=p\geq\rho_s^k$. We check that the statements
continue to hold for $r=p+1$. For this first note that the convex curve $\d(x_{p+1}^k)$ encloses the hole
$\Psi_p^k(H_s^k)$ if and only if the convex curve $\d(x_{p+1}^{k-1})$ encloses the hole $\Psi_p^{k-1}(H_{s-1}^{k-1})$.
Indeed, if $s>i+2$, then the wire in geometric position $s$ on the right hand side of the wiring diagram
is wire $w_l$ for some $l<k$, since wire $w_k$ is in geometric position $i+2$ on the right hand side. If we take a step back in
our blowup algorithm, then wire $w_l$ will have geometric position $s-1$ on the right hand side of the wiring
diagram, now with $k-1$ wires. For $s>i+2$, the result claimed now follows  from the general fact that $\d(x_{p+1}^m)$ encloses the hole
$\Psi_p^m(H_t^m)$ if and only if the wire in geometric position $t$ on the right hand side of the diagram
passes through the marked point $x_{p+1}^m$.
If $s=i+2$, then the wire in geometric position $s$ on the right hand side of the wiring diagram
will be wire $w_k$. Since wire $w_k$ passes through each marked point $x_q^k$ that wire $w_j$ passes through for $1\leq q\leq k-2$, otherwise remaining
parallel to $w_j$, arguing as above, we obtain the same result for $s=i+2$.

By hypothesis of the induction on $r$, it now
follows that the curve $\g_{s,p}^k$ will be right or left-convex according to whether $\g_{s-1,p}^{k-1}$ is
right or left-convex. Furthermore, the holes that $\g_{s,p}^k$ encloses can be obtained from the holes that
$\g_{s-1,p}^{k-1}$ encloses by splitting the hole $\Psi_p^{k-1}(H_{i+1}^{k-1})$ into two adjacent holes.

In a similar way, the holes that $\d(x_{p+1}^k)$ encloses can be obtained from the holes that $\d(x_{p+1}^{k-1})$ encloses by splitting the hole $\Psi_p^{k-1}(H_{i+1}^{k-1})$ into two adjacent holes. As a consequence, the curve
$\g_{s,p+1}^k=(\D^k_{p+1})^{-1}(\g_{s,p}^k)$ satisfies (L1) and (L2) and the holes that $\g_{s,p+1}^k$ encloses can be obtained from the holes that $\g_{s-1,p+1}^{k-1}$ encloses by splitting the hole $\Psi_{p+1}^{k-1}(H_{i+1}^{k-1})$ into two adjacent holes.

We now show that $\G_{s,p+1}^k$ satisfies (L3) by considering the cases that $\d(x_{p+1}^k)$ encloses and does not enclose the hole
$\Psi_p^k(H_s^k)$  separately. First suppose that $\d(x_{p+1}^k)$ does not enclose
the hole $\Psi_p^k(H_s^k)$. Then $\d(x_{p+1}^{k-1})$
does not enclose the hole $\Psi_p^{k-1}(H_{s-1}^{k-1})$. Assume that $\g_{s-1,p}^{k-1}$
is right-convex; the case that $\g_{s-1,p}^{k-1}$ is left-convex is similar. Then $\g_{s-1,p+1}^{k-1}$ is also
right-convex and we have the following possibilities for $\d(x_{p+1}^{k-1})$:

(i) $\d(x_{p+1}^{k-1})$ is disjoint from $\g_{s-1,p}^{k-1}$. In this case $\d(x_{p+1}^{k-1})$ cannot enclose a subset of holes from the primary lobe of
$\g_{s-1,p}^{k-1}$, since otherwise $\G_{s-1,p+1}^{k-1}$ would fail to satisfy (L3). It follows in this case that $\d(x_{p+1}^k)$ is disjoint
from $\g_{s,p}^k$ and does not enclose any subset of holes from the primary lobe of $\g_{s,p}^k$.
Thus $\G_{s,p+1}^k$ satisfies (L3).

(ii) $\d(x_{p+1}^{k-1})$ intersects $\g_{s-1,p}^{k-1}$ and encloses at least one hole below the primary lobe of $\g_{s-1,p}^{k-1}$.
In this case $\G_{s-1,p+1}^{k-1}$ would fail to satisfy (L3), which contradicts our induction hypothesis. Hence this case cannot occur.

(iii) $\d(x_{p+1}^{k-1})$ intersects $\g_{s-1,p}^{k-1}$ and encloses at least one hole above the secondary lobe of $\g_{s-1,p}^{k-1}$.
In this case, if $\g_{s-1,p}^{k-1}$ does not contain all the holes between the two lobes, $\g_{s-1,p+1}^{k-1}$ would fail to be
one-sided convex, which contradicts (L1), and if it does contain all the holes between the two lobes, then $\g_{s-1,p+1}^{k-1}$ would become left-convex, but not convex, which contradicts (L2). Therefore, by  the induction hypothesis these cases cannot occur.

(iv) $\d(x_{p+1}^{k-1})$ intersects $\g_{s-1,p}^{k-1}$ and encloses at least one hole below the secondary lobe of $\g_{s-1,p}^{k-1}$
but no hole above it.
If $\d(x_{p+1}^{k-1})$ does not enclose all the holes contained in the secondary lobe of $\g_{s-1,p}^{k-1}$, then either $\g_{s-1,p+1}^{k-1}$
would have more than two ``lobes" or $\Psi_r^{k-1}(H_{s-1}^{k-1})$ would not be the innermost hole of the primary lobe. Both of these cases contradict the induction hypothesis hence they cannot occur. Thus the only
possibility in this case is that $\d(x_{p+1}^{k-1})$ encloses all the holes contained in the secondary lobe and at least one
hole below it.
This case is illustrated in Figure~\ref{fig: fige}(a). In this case $\d(x_{p+1}^k)$ will enclose all holes in the secondary lobe of $\g_{s,p}^k$
and enclose at least one hole below it. Hence $\G_{s,p+1}^k$ will satisfy (L3) in this case also.
This concludes the analysis for the case $\d(x_{p+1}^k)$ not enclosing the hole $\Psi_p^k(H_s^k)$.

Now suppose that $\d(x_{p+1}^k)$ encloses the hole $\Psi_p^k(H_s^k)$. Then $\d(x_{p+1}^{k-1})$ encloses the hole $\Psi_p^{k-1}(H_{s-1}^{k-1})$.
Suppose that $\g_{s,p}^k$ is right-convex; again the left-convex case is similar. Then $\g_{s-1,p}^{k-1}$ is also right-convex.
By the induction hypothesis, the image of $\g_{s-1,p}^{k-1}$ under the clockwise half-twist about $\d(x_{p+1}^{k-1})$
must be left-convex and the image of $\G_{s-1,p}^{k-1}$ must continue to satisfy (L3). In this case, by an argument similar to what we used above,   it can be checked that  $\d(x_{p+1}^{k-1})$  must enclose all the holes of the secondary lobe (and no
holes above it) and any number of holes below $\Psi_p^{k-1}(H_{s-1}^{k-1})$; this situation is illustrated in
Figure~\ref{fig: fige}(b). 
Since, by splitting vertically the hole $\Psi_p^{k-1}(H_{i+1}^{k-1})$ into two adjacent holes, we obtain $\g_{s,p}^k$ and $\d(x_{p+1}^k)$
from $\g_{s-1,p}^{k-1}$ and $\d(x_{p+1}^{k-1})$, respectively, it follows that $\g_{s,p}^k$ and $\d(x_{p+1}^k)$ will have the same form as
 $\g_{s-1,p}^{k-1}$ and $\d(x_{p+1}^{k-1})$, that is, $\d(x_{p+1}^k)$ will enclose $\g_{s,p}^k$ or it will enclose all the holes of the secondary
 lobe (and no holes above it) and any number of holes below $\Psi_p^k(H_s^k)$.
 It is now clear that, after applying a
 clockwise half-twist about $\d(x_{p+1}^k)$  to  $\G_{s,p}^k$, $\G_{s,p+1}^k$ will continue to satisfy (L3). Although we did not draw it in Figure~\ref{fig: fige}(b), it is possible that $\d(x_{p+1}^{k-1})$ contains holes below the primary lobe, and hence contains  $\g_{s-1,p}^{k-1}$,  in which case (L3) is satisfied trivially by $\G_{s,p+1}^k$.

We have thus checked that $\g_{s,r}^k$ and $\G_{s,r}^k$ satisfy (L1), (L2) and (L3) for $r\leq k-2$. We now check
that they satisfy  (L1), (L2) and (L3) for $r=k-1$ also.

\begin{figure}[ht]  \relabelbox \small {\epsfxsize=3.5in
\centerline{\epsfbox{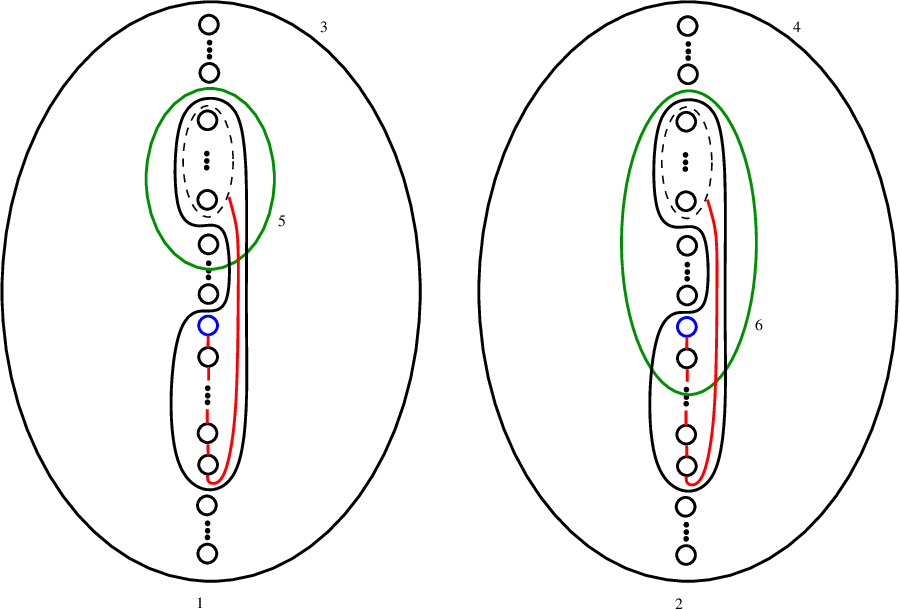}}}
\relabel{1}{(a)}  \relabel{2}{(b)} \relabel{3}{$D_{k-1}$} \relabel{4}{$D_{k-1}$} \relabel{5}{$\d(x_{p+1}^{k-1})$} \relabel{6}{$\d(x_{p+1}^{k-1})$}
 \endrelabelbox
\caption{Some possibilities for $\d(x_{p+1}^{k-1})$.  }\label{fig: fige} \end{figure}

As $\g_{s-1,k-2}^{k-1}$ will be in the left-most copy of $D_{k-1}$, by
Lemma \ref{lem: consec}, all the holes enclosed
by $\g_{s-1,k-2}^{k-1}$ will be adjacent and the hole $\Psi_{k-2}^{k-1}(H_{s-1}^{k-1})$ will be either at the top or the
bottom of these holes. Being one-sided convex (by the induction hypothesis), the curve $\g_{s-1,k-2}^{k-1}$ must be
convex and hence the pair  $(\g_{s-1,k-2}^{k-1},\G_{s-1,k-2}^{k-1})$ must be as in Figure~\ref{fig: figf}.  Thus the curve $\g_{s,k-2}^k$ must also be convex as the holes it encloses are obtained from the holes that $\g_{s-1,k-2}^{k-1}$ encloses by splitting $\Psi_{k-2}^{k-1}(H_{i+1}^{k-1})$ into two adjacent holes. As the holes enclosed by $\g_{s,k-1}^k$ must be adjacent with the hole $\Psi_{k-1}^k(H_s^k)$ at one end
(again, by Lemma \ref{lem: consec}), it follows that $\d(x_{k-1}^k)$ must be disjoint from
$\g_{s,k-2}^k$. Hence the curve $\g_{s,k-1}^k$ will be convex and statements (L1) and (L2) will hold for $r=k-1$ also.

To see that (L3) also holds for $r=k-1$, first suppose that $s>i+2$. Then $\Psi_{k-2}^k(H_s^k)$ will already be at the top or bottom of the holes enclosed by $\g_{s,k-2}^k$. As the last marked point $x_{k-1}^k$ only involves the wires having the geometric positions $\{t\,|\,1\leq t\leq i+2, t\neq i+1\}$ on the right hand side, the last clockwise half-twist $(\D_{k-1}^k)^{-1}$ will be about a subdisk that does not enclose $\Psi_{k-2}^k(H_s^k)$. It easily follows that $\G_{s,k-1}^k$ will satisfy (L3).

Now suppose that $s=i+2$. Then the hole $\Psi_{k-2}^k(H_s^k)$ will be either one below the top hole or one above the
bottom hole of $\g_{s,k-2}^k$. The last clockwise half-twist will be about a subdisk that encloses all the holes of $\g_{s,k-2}^k$ except the hole
$\Psi_{k-2}^k(H_{i+1}^k)$, which will be at the top or the bottom. Again it follows that $\G_{s,k-1}^k$ will satisfy (L3).
This completes the proof of {Case~II.B}.

\begin{figure}[ht]  \relabelbox \small {\epsfxsize=3.5in
\centerline{\epsfbox{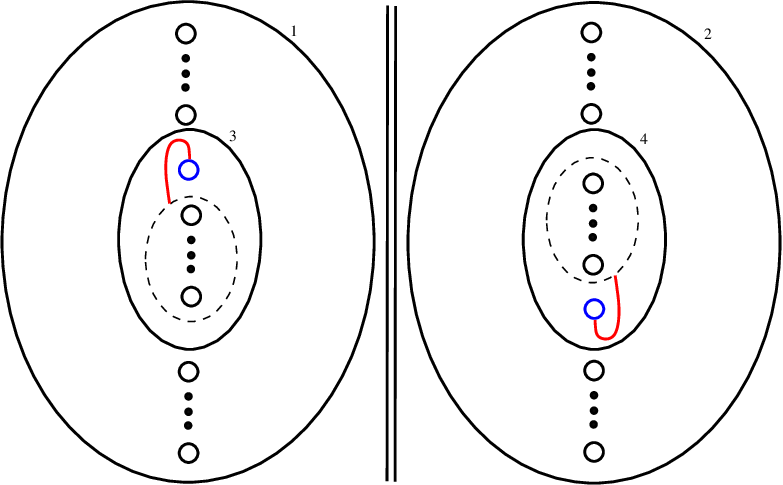}}} \relabel{3}{$\g_{s-1,k-2}^{k-1}$}  \relabel{4}{$\g_{s-1,k-2}^{k-1}$} \relabel{1}{$D_{k-1}$} \relabel{2}{$D_{k-1}$}
\endrelabelbox
\caption{One arc of $\G_{s-1,k-2}^{k-1}$ is shown in red.  }\label{fig: figf} \end{figure}

\smallskip

{\bf \underline{Case II.C (Interior blowup, $s=i+1$)}:} Suppose that $s=i+1$. By the proof of {Case~II.B}, we know that $\g_{i+2,r}^k$ and $\G_{i+2,r}^k$ satisfy
(L1), (L2) and (L3) for $1\leq r\leq k-1$. Note that for $1\leq r\leq k-2$, the holes $\Psi_r^k(H_{i+1}^k)$ and $\Psi_r^k(H_{i+2}^k)$
will be adjacent, since the wires $w_j$ and $w_k$, having geometric positions $i+1$ and $i+2$, respectively, on the
right hand side of the diagram, will remain consecutive up to the marked point $x_{k-1}$. Thus hole $\Psi_r^k(H_{i+1}^k)$
will be in the primary lobe of $\g_{i+2,r}^k$ for $1\leq r\leq k-2$. It is now clear that $\g_{i+1,r}^k$ is given by isotoping $\g_{i+2,r}^k$
over the hole $\Psi_r^k(H_{i+2}^k)$ from the side dictated by $\G_{i+2,r}^k$ for $1\leq r\leq k-2$; see Figure~\ref{fig: figg}.
It follows that statements (L1), (L2) and (L3) hold for $\g_{i+1,r}^k$ and $\G_{i+1,r}^k$ for $1\leq r\leq k-2$. We point out that it is at this stage in the induction that we make essential use of the fact that $\G_{i+2,r}^k$ satisfies (L3) to obtain that $\g_{i+1,r}^k$ satisfies (L1).

\begin{figure}[ht]  \relabelbox \small {\epsfxsize=3.5in
\centerline{\epsfbox{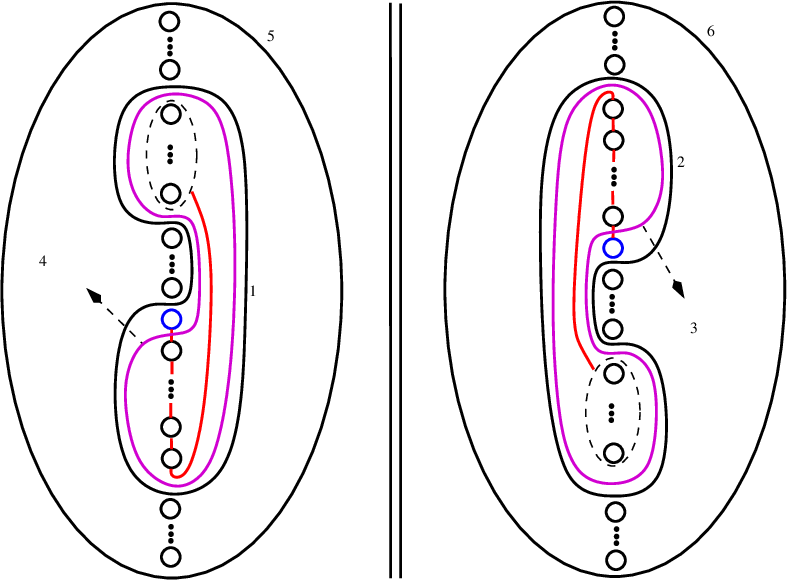}}}   \relabel{1}{$\g_{i+2,r}^k$}   \relabel{2}{$\g_{i+2,r}^k$}  \relabel{3}{$\g_{i+1,r}^k$} \relabel{4}{$\g_{i+1,r}^k$} \relabel{5}{$D_k$} \relabel{6}{$D_k$} 
\endrelabelbox
\caption{The case when $1 \leq r\leq k-2$.}\label{fig: figg} \end{figure}

For $r=k-1$, we argue as follows: Since $\g_{i+1,k-2}^{k-1}$ will be convex with the hole $\Psi_{k-2}^{k-1}(H_{i+1}^{k-1})$
at one end, the curve $\g_{i+2,k-2}^{k}$ will also be convex with the hole $\Psi_{k-2}^k(H_{i+1}^k)$ at one end and the hole $\Psi_{k-2}^k(H_{i+2}^k)$ adjacent to it, both in the primary lobe of $\g_{i+2,k-2}^{k}$. By the discussion in the previous paragraph, it follows that the curve $\g_{i+1,k-2}^k$ must have one of the two forms given in  Figure~\ref{fig: figh}. It easily follows that
$\g_{i+1,k-1}^k$ and $\G_{i+1,k-1}^k$ satisfy statements (L1), (L2) and (L3). This completes the proof of {Case~II.C}. \end{proof}

\begin{figure}[ht]  \relabelbox \small {\epsfxsize=4.5in
\centerline{\epsfbox{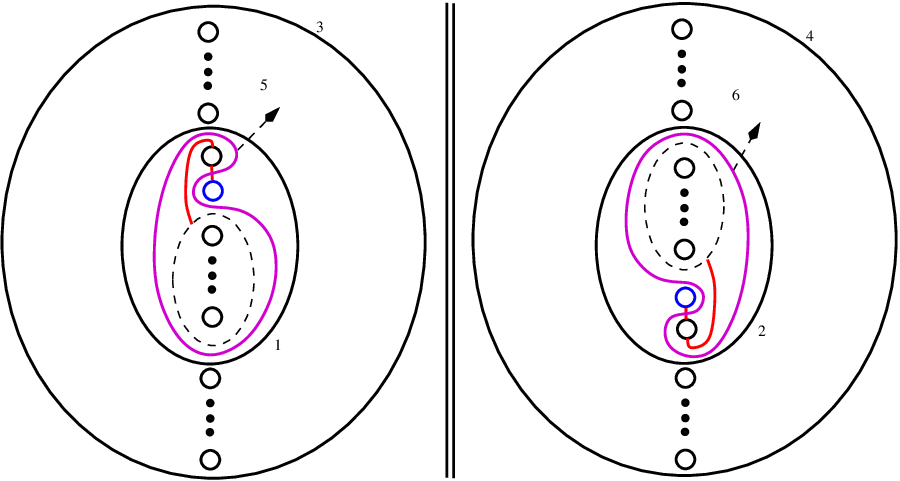}}} \relabel{1}{$\g_{i+2,k-2}^k$}  \relabel{2}{$\g_{i+2,k-2}^k$} \relabel{3}{$D_k$} \relabel{4}{$D_k$} \relabel{5}{$\g_{i+1,k-2}^k$} \relabel{6}{$\g_{i+1,k-2}^k$}
\endrelabelbox
\caption{The case when $r = k-1$.}\label{fig: figh} \end{figure}

\subsubsection{The case of $\a$-curves:} \label{subsec: alpha} We reformulate our claim in Proposition~\ref{prop: mirror} (b) about $\a$-curves as Proposition~\ref{prop: alpha} below, where have we replaced $\mathcal{W}_\textbf{n} (\textbf{m})$ by $\mathcal{W}_\textbf{n}$, since the extension from $\mathcal{W}_\textbf{n}$ to $\mathcal{W}_\textbf{n} (\textbf{m})$ is irrelevant.  Recall that $\overline{\a}$ denotes the mirror image of a given curve $\a$. In the following, we will decorate each curve with a superscript to indicate the number of holes in the disk in which they are embedded. For example,  we will use  $\a^k_t$ to indicate that we are talking about the curve $\a_t$ (see Section~\ref{sec: stab}) in $D_k$. Similarly, we  will decorate each marked point in a wiring diagram with a superscript to indicate the number of wires in the diagram.

 \begin{Prop} \label{prop: alpha} Let $\mathcal{W}_\textbf{n}$ be a wiring diagram  with $k \geq 2$ wires and $k-1$
marked points $x^k_{k-1},\ldots,x^k_{1}$ (reading from left to right) constructed as in Definition~\ref{def: wd} with respect to some blowup sequence. Then,  for each $1 \leq t \leq k-1$, the vanishing cycle $V(x^k_{t})$ associated to the marked point $x^k_{t}$ via the method of Plamenevskaya and  Starkston, is the mirror image  $\overline{\a}^k_t$ of the curve ${\a}^k_t$ obtained by the stabilization algorithm described in Section~\ref{sec: stab} with respect to the same blowup sequence.   \end{Prop}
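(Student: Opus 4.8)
The plan is to prove Proposition~\ref{prop: alpha} by induction on the number of wires $k$, following the same case analysis --- exterior blowup versus interior blowup --- that organizes the proof of Proposition~\ref{prop: lobe}. I work throughout on the vertical $D_k$ and with isotopy classes of curves. Recall from Definition~\ref{def: cycle} that $V(x_t^k)=\D(x_1^k)\circ\cdots\circ\D(x_{t-1}^k)(\d(x_t^k))$, so only the half-twists attached to the marked points $x_1^k,\dots,x_{t-1}^k$ sitting to the right of $x_t^k$ enter the computation. The guiding observation is that the marked point $x_t$ and the curve $\a_t$ are produced by one and the same blowup in the sequence (the step passing from the $t$-wire/$t$-hole diagram to the $(t+1)$-wire/$(t+1)$-hole diagram), so the two constructions can be compared step by step. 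For the base case $k=2$ there is a single marked point and $V(x_1^2)=\d(x_1^2)$ is a boundary-parallel curve, which is its own mirror image and equals $\overline{\a}_1^2$.

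The exterior-blowup step is the easy inductive case, paralleling Case~I in the proof of Proposition~\ref{prop: lobe}. Here I embed $D_{k-1}\subset D_k$ as the top $k-1$ holes; since the new wire $w_k$ lies below all the others, carries only the free marked point $x_{k-1}^k$, and has no interaction with the remaining wires, one has $\d(x_t^k)=\d(x_t^{k-1})$ and $\D(x_t^k)=\D(x_t^{k-1})$ for $t\le k-2$, whence $V(x_t^k)=V(x_t^{k-1})=\overline{\a}_t^{k-1}=\overline{\a}_t^k$ by induction, the stabilization algorithm leaving $\a_t$ unchanged under an exterior blowup. For the new point, $\d(x_{k-1}^k)$ is the curve parallel to the bottom hole $H_k^k$; it is fixed by every earlier half-twist, since those are supported in subdisks meeting only the top $k-1$ holes, so $V(x_{k-1}^k)=\d(x_{k-1}^k)=\partial H_k^k=\overline{\a}_{k-1}^k$.

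The interior-blowup step is the substance of the argument, and I organize it exactly as in Case~II of that proof: delete the last wire $w_k$ and the corresponding hole to pass to the $(k-1)$-wire diagram, and track how each ingredient changes when the hole $H_{i+1}^k$ is split into two adjacent holes, the new hole being $H_{i+2}^k$. For the old marked points $x_t^k$ with $t\le k-2$, the twist $\D(x_{k-1}^k)$ never enters the formula for $V(x_t^k)$, so the convex curve $\d(x_t^k)$, the half-twists $\D(x_j^k)$, and hence the entire composition defining $V(x_t^k)$ are obtained from their $(k-1)$-wire counterparts by this hole-splitting operation (with the same dichotomy as in Cases~II.A and II.B, according to whether $\d(x_t^k)$ encloses $H_{i+1}^k$); on the stabilization side $\a_t^k$ is obtained from $\a_t^{k-1}$ by the identical split. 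Since splitting a hole commutes with the twist composition, the inductive hypothesis $V(x_t^{k-1})=\overline{\a}_t^{k-1}$ upgrades to $V(x_t^k)=\overline{\a}_t^k$.

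The crux, and the step I expect to be the main obstacle, is the new marked point $x_{k-1}^k$, where $\d(x_{k-1}^k)$ is convex but the target $\overline{\a}_{k-1}^k$ is the non-convex, two-lobe curve enclosing the right-hand holes $H_1^k,\dots,H_i^k$ together with $H_{i+2}^k$ while skipping $H_{i+1}^k$. Since $x_{k-1}^k$ is the leftmost marked point, all $k-2$ remaining half-twists act, and the colour-preserving reordering principle from the proof of Lemma~\ref{lem: lobetomirror} shows that the composition $\D(x_1^k)\circ\cdots\circ\D(x_{k-2}^k)$ carries the holes matching the left-hand geometric order of the $i+1$ wires $\mathcal{W}_i\cup\{w_k\}$ to the holes at their right-hand positions $\{1,\dots,i,i+2\}$, which are exactly the holes enclosed by $\a_{k-1}^k$. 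The difficulty is that this only identifies the set of enclosed holes, whereas the claim is an equality of isotopy classes: I must show that the intermediate curves $\D(x_1^k)\circ\cdots\circ\D(x_j^k)(\d(x_{k-1}^k))$ retain a controlled one- or two-lobe shape at every stage --- never acquiring a spurious third lobe --- and that the parity and ordering data recorded in Lemma~\ref{lem: consec} force the curve to bulge on the correct side of $H_{i+1}^k$. Establishing this lobe-by-lobe control, in the spirit of statements (L1)--(L3) and in close analogy with the delicate bookkeeping of Cases~II.B and II.C, is the technical heart of the proof and the place where the argument is most likely to require care.
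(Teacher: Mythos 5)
Your base case, your exterior-blowup step, and your interior-blowup step for the old marked points $x^k_t$, $t\leq k-2$, coincide with the paper's proof (its Cases I and II.A): the hole-splitting argument based on the wires $w_j$ and $w_k$ remaining geometrically consecutive, so that every curve in the twist composition encloses both or neither of the two corresponding holes, is exactly the one used there. The genuine gap is the case you yourself flag as the crux: the new marked point $x^k_{k-1}$ created by an interior blowup. For it you only state a plan --- redo an (L1)--(L3)-style lobe-control induction on the intermediate curves $\D(x^k_1)\circ\cdots\circ\D(x^k_j)(\d(x^k_{k-1}))$, ruling out spurious lobes and pinning down on which side of $H^k_{i+1}$ the curve bulges --- and you explicitly defer its execution. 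That plan is never carried out, and executing it would amount to reproving a version of Proposition~\ref{prop: lobe} adapted to the $\a$-curves; as written, the proof of the hardest case is missing.

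The paper closes this case with a short structural argument rather than any further induction, and this is the idea absent from your proposal. Write $\overline{\a}^k_{k-1}=\D^k_{\b_{i+1}}(\g^k_{i+1})$, where $\b^k_{i+1}$ is the convex curve enclosing only the two split holes $H^k_{i+1}$, $H^k_{i+2}$ and $\D^k_{\b_{i+1}}$ is the counterclockwise half-twist along it. Then, with $\Psi^k_{k-2}$ as in Definition~\ref{def: psi}, naturality of half-twists under homeomorphisms gives
$$\Psi^k_{k-2}\bigl(\overline{\a}^k_{k-1}\bigr)=\D^k_{\b_{\psi(i+2)}}\bigl(\Psi^k_{k-2}(\g^k_{i+1})\bigr),\qquad \b^k_{\psi(i+2)}:=\Psi^k_{k-2}\bigl(\b^k_{i+1}\bigr),$$
i.e. the conjugation identity $\Psi^k_{k-2}\circ\D^k_{\b_{i+1}}\circ(\Psi^k_{k-2})^{-1}=\D^k_{\b_{\psi(i+2)}}$. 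Both ingredients on the right are already known at this point: $\Psi^k_{k-2}(\g^k_{i+1})=\D^k_{k-1}(\d(y^k_{i+1}))$ by Proposition~\ref{prop: mirror}(a) applied with $s=i+1$ (proved beforehand via Proposition~\ref{prop: lobe}, independently of Proposition~\ref{prop: alpha}), and $\b^k_{\psi(i+2)}$ is again a convex curve around two adjacent holes, precisely because $w_j$ and $w_k$ stay geometrically consecutive throughout the diagram. Applying this one explicit half-twist produces $\d(x^k_{k-1})$, hence $V(x^k_{k-1})=\overline{\a}^k_{k-1}$. So the missing ingredient is this reduction of the problematic case to the already-established $\g$-curve statement plus the identity $\Psi\circ\D_\b\circ\Psi^{-1}=\D_{\Psi(\b)}$, which replaces the lobe bookkeeping you propose (but do not carry out) by a one-step computation.
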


\begin{proof}  According to Definition~\ref{def: cycle}, $V(x^k_{1}) =\d(x^k_1)$ and $V(x^k_{t}) = \D^k_1 \circ \cdots \circ \D^k_{t-1} (\d(x^k_t))$ for  $2 \leq t \leq k-1$. Therefore, we need to verify that $\d(x^k_1) =  {\a}^k_1$ (both curves are convex and $ \overline{\a}^k_1= {\a}^k_1$) and $\D^k_1 \circ \cdots \circ \D^k_{t-1} (\d(x^k_t)) =  \overline{\a}^k_t, $   for  $2 \leq t \leq k-1$.

For $k=2$, the wiring diagram has only two parallel wires and one marked point $x^2_1$ on the bottom wire, corresponding to the initial blowup $(0) \to (1,1)$. The statement holds for this case since $\d(x^2_1) =  {\a}^2_1$.  Now suppose that $k\geq 3$
and the statement holds for any wiring diagram with $k-1$ wires and $k-2$ marked points, constructed as in Definition~\ref{def: wd} with respect to some blowup sequence. We will prove  that the statement holds for any wiring diagram with $k$ wires and
$k-1$ marked points obtained by inserting one more wire and a marked point corresponding to the new blowup.  \smallskip

{\bf \underline{Case I (Exterior blowup)}:} This is the easiest case.  Suppose that the new wire $w_k$ is inserted into the diagram with $k-1$ wires as a consequence of an exterior blowup so that $w_k$ lies {\em below} all the wires and has no ``interaction" with the other wires.  Recall that $w_k$ carries a free marked point $x^k_{k-1}$ which is placed geometrically to the left of all the previous marked points in the diagram.

Consider a fixed embedding of $D_{k-1} \subset D_k$, where $D_{k-1}$ includes the top $k-1$ holes in $D_k$. In other words $D_k$ is obtained from   $D_{k-1}$ by inserting an extra hole, named  $H^k_k$ by our conventions, at the bottom. It is clear that for any $1 \leq t \leq k-2$, the convex curve $\d (x^{k-1}_t) \subset D_{k-1}$ can be identified with the convex curve  $\d (x^{k}_t) \subset D_k$ and hence   $\D^k_{t} = \D^{k-1}_{t}$ for any $1 \leq t \leq k-2$, under this embedding. Similarly, for any $1 \leq t \leq k-2$, ${\a}^k_t$ can be identified with ${\a}^{k-1}_t$, by our stabilization algorithm in Section~\ref{sec: stab}.

By induction, the property we want to verify holds for the wiring diagram with $k-1$ wires before we insert $w_k$, i.e., we have $\d(x^{k-1}_1) =  {\a}^{k-1}_1$ and $\D^{k-1}_1 \circ \cdots \circ \D^{k-1}_{t-1} (\d(x^{k-1}_t)) =  \overline{\a}^{k-1}_t, $   for  $2 \leq t \leq k-2$. Under the embedding above, these can be upgraded to $\d(x^k_1) =  {\a}^k_1$  and $\D^k_1 \circ \cdots \circ \D^k_{t-1} (\d(x^k_t)) =  \overline{\a}^k_t, $   for  $2 \leq t \leq k-2$, by simply replacing the superscript $k-1$ with $k$. The key point here is that the composition $\D^k_1 \circ \cdots \circ \D^k_{t-1}$ takes place in the fixed embedded disk $D_{k-1} \subset D_k$.

To finish the proof of this case, we only have to verify the statement for $t=k-1$. Note that $\d (x^k_{k-1}) = \a^k_{k-1}$, which by definition,  is the convex curve that encloses the last hole $H^k_k \subset D_k$. We observe that $$\D^k_1 \circ \cdots \circ \D^k_{k-2} (\d (x^k_{k-1})) = \a^k_{k-1}$$  so that the ``last" vanishing cycle is  $\a^k_{k-1} = \overline{\a}^k_{k-1}$, which is consistent with our stabilization algorithm in Section~\ref{sec: stab}. \smallskip

{\bf \underline{Case II (Interior blowup)}:}  Now, suppose that the new wire $w_k$ is introduced into the diagram with $k-1$ wires as a consequence of an interior blowup at the $i$th term so that $w_k$ is initially right below the $(i+1)$st wire with respect to the {\em geometric ordering} of the wires on the right-hand side of the diagram.  Suppose this $(i+1)$st wire is $w_j$. Our proof below splits into two subcases:  the case where $1 \leq t \leq k-2$, and the case $t = k-1$. \smallskip

{\bf \underline{Case II.A (Interior blowup, $1 \leq t \leq k-2$)}:}  Now, imagine that we take a step back in our blowup algorithm. In other words, we delete the last wire $w_k$ (and the last marked point $x^k_{k-1}$ and the associated last twisting) from the diagram. At the same time we remove the {\em corresponding hole} from $D_k$ as follows. First we remove the  $(i+2)$nd hole from $D_k$ to obtain the rightmost copy of $D_{k-1}$. As we move from right to left in the wiring  diagram, every time we pass through a marked point, we have a new copy of $D_{k-1}$ obtained by removing from $D_k$ the hole whose order is the same as the local geometric order of the wire $w_k$. All these copies of $D_{k-1}$ can of course be identified with the rightmost copy of $D_{k-1}$, in which by induction, we have $$V(x^{k-1}_{t}) := \D^{k-1}_1 \circ \cdots \circ \D^{k-1}_{t-1} (\d(x^{k-1}_t)) = \overline{\a}^{k-1}_t $$ for  $2 \leq t \leq k-2$ and $\d(x^{k-1}_1) =  {\a}^{k-1}_1$. We claim that we can upgrade the superscript $k-1$ to $k$ in the previous sentence by splitting the $(i+1)$st hole in (the rightmost copy of)  $D_{k-1}$, to create a new hole right below it, and hence identifying the new disk as $D_{k}$ and promoting the all the relevant curves from $D_{k-1}$ to $D_{k}$. Note that this is precisely how ${\a}^{k}_t$ is obtained from  ${\a}^{k-1}_t$ by the stabilization algorithm  in Section~\ref{sec: stab}. We just need to show that $V(x^{k-1}_t)$ can be promoted to $V(x^{k}_t)$ in the same manner, for each $1 \leq t \leq k-2$.    This is easy to see for $t=1$ since the convex curve $\d (x^{k-1}_1)$ can be promoted to the convex curve $\d (x^{k}_1)$ by inserting a hole in $D_{k-1}$ either enclosed by $\d (x^{k}_1)$ or not depending on whether $x^{k}_1$ (which is in fact the same marked point $x^{k-1}_1$) belongs to $w_j$ or not.

For $2 \leq t \leq k-2$,  we argue as follows. In the aforementioned copies of $D_{k-1}$, there is a hole whose order is the same as the local geometric order of the wire $w_j$. Since, by the blowup algorithm,  the wires $w_j$ and $w_k$ are geometrically consecutive throughout the diagram (except when they intersect) the corresponding two holes will be adjacent in $D_k$, interchanging their relative order at each intersection point of $w_j$ and $w_k$ (which is indeed a marked point in the diagram). Moreover,  for  each $1 \leq t \leq k-2$, the convex curve $\d(x^k_{t})$ assigned to $x^k_{t}$ will either enclose both or neither of these holes, depending on whether $x^k_t$ belongs to $w_j$ or not.  Therefore, for each $2 \leq t \leq k-2$, any one of the curves $$\d(x^k_t),\; \D^k_{t-1} (\d(x^k_t)), \; \ldots, \; \D^k_1 \circ \cdots \circ \D^k_{t-1} (\d(x^k_t))$$ obtained iteratively by applying counterclockwise half-twists starting from  the convex curve $\d(x^k_t)$, will either enclose both or neither of these two holes.  This implies that we could just upgrade the curves $$\d(x^{k-1}_t),\;  \D^{k-1}_{t-1} (\d(x^{k-1}_t)), \; \ldots, \;  \D^{k-1}_1 \circ \cdots \circ \D^{k-1}_{t-1} (\d(x^{k-1}_t))$$ from $D_{k-1}$ to $D_k$ by inserting a new hole (corresponding to the local geometric position of the new wire $w_k$) in each copy of $D_{k-1}$. Note that the new hole corresponding to the new wire $w_k$ can be viewed as being obtained by splitting the hole corresponding to $w_j$ at each step. We observe that this new hole will appear right below the $(i+1)$st hole in the rightmost copy of $D_{k-1}$ giving $D_k$, and these two holes in question are enumerated as $H^k_{i+1}$ and $H^k_{i+2}$ in the rightmost copy of $D_k$. This ``splitting" of the $(i+1)$st hole is the crux of the matter in our stabilization algorithm in  Section~\ref{sec: stab}.

The upshot of this discussion is that for  $1 \leq t \leq k-2$, the vanishing cycle $V( x^k_{t})$  is the curve $\overline{\a}^k_t$, which is in fact nothing but $\overline{\a}^{k-1}_t$ upgraded to $D_k$ from $D_{k-1}$, in a manner consistent with our stabilization algorithm in  Section~\ref{sec: stab}. Finally, the case $t =  k-1$ will be treated separately below. \smallskip

{\bf \underline{Case II.B (Interior blowup, $t=k-1$)}:}  To finish the proof of Proposition~\ref{prop: alpha}, we just have to verify that the ``last" vanishing cycle $V(x^k_{k-1})$  is the curve $\overline{\a}^k_{k-1}$. Equivalently, we need to verify that $\Psi^k_{k-2} ( \overline{\a}^k_{k-1}) = \d(x^k_{k-1})$, where $\Psi^k_{k-2} = (\D_{k-2}^{k})^{-1} \circ \cdots \circ (\D_1^{k})^{-1}$, by Definition~\ref{def: psi}. Here, {\em instead of induction}, we will rather use the statement in Proposition~\ref{prop: mirror}~(a) for $\g_{i+1}$, which we already proved.

The standing assumption in {Case II} is that the new wire $w_k$ is introduced into the diagram with $k-1$ wires as a consequence of an interior blowup at the $i$th term so that $w_k$ is initially right below the $(i+1)$st wire with respect to the {\em geometric ordering} of the wires on the right-hand side of the diagram. Suppose that this $(i+1)$st wire, labelled $w_j$, has an odd number of marked points. The case with an even number of points is very similar and is left to the interested reader.

On the left in Figure~\ref{fig: ex10a}, we depict the convex curves $ \d(y^k_{i+1})$ and $\d(x^k_{k-1})$. This configuration can be deduced from the proof of Lemma~\ref{lem: consec} for the case when $w_j$ has an odd number of marked points. For $s=i+1$, Proposition~\ref{prop: mirror} (a) implies that
$$ \d(y^k_{i+1})=   \Psi^k_{k-1} (\g^k_{i+1}) = (\D^k_{k-1})^{-1} \Psi^k_{k-2} (\g^k_{i+1})$$ and we set $(\g^k_{i+1})' = \D^k_{k-1}(\d(y^k_{i+1}) ) = \Psi^k_{k-2} (\g^k_{i+1})$, which is depicted on the right in Figure~\ref{fig: ex10a}.

\begin{figure}[ht]  \relabelbox \small {\epsfxsize=4.5in
\centerline{\epsfbox{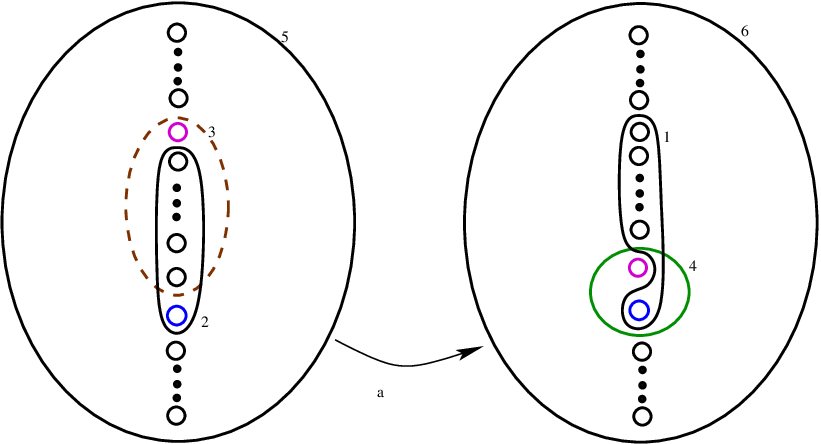}}}
 \relabel{2}{$\d(y^k_{i+1})$} \relabel{a}{$\D^k_{k-1}$} \relabel{1}{$(\g^k_{i+1})'$} \relabel{3}{$\d(x^k_{k-1})$} \relabel{4}{$\b^k_{\psi(i+2)}$} \relabel{5}{$D_k$}  \relabel{6}{$D_k$}
 \endrelabelbox
\caption{The curve $(\g^k_{i+1})' = \D^k_{k-1}(\d(y^k_{i+1}))$. }\label{fig: ex10a} \end{figure}

Let $\b^k_{i+1}$ be the convex curve in  Figure~\ref{fig: ex11} enclosing the holes $H^k_{i+1}$ and  $H^k_{i+2}$ and let $\D^k_{\b_{i+1}}$ denote the counterclockwise half-twist in the subdisk bounded by  $\b^k_{i+1}$. Note that in $D_k$, we have $\overline{\a}^k_{k-1}=\D^k_{\b_{i+1}} (\g^k_{i+1})$ as one can see from Figure~\ref{fig: ex11}.

\begin{figure}[ht]  \relabelbox \small {\epsfxsize=2in
\centerline{\epsfbox{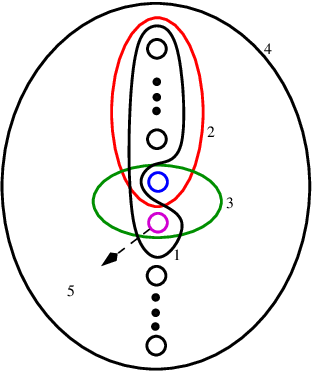}}} \relabel{1}{$\overline{\a}^k_{k-1}$} \relabel{2}{$\g^k_{i+1}$} \relabel{3}{$\b^k_{i+1}$} \relabel{4}{$D_k$}  \relabel{5}{$H^k_{i+2}$}
 \endrelabelbox
\caption{The curves $\g^k_{i+1}$, $\b^k_{i+1}$, and  $\overline{\a}^k_{k-1}$ in $D_k$.}\label{fig: ex11} \end{figure}

Hence we have
\begin{equation}\label{eq1}
\Psi^k_{k-2}(\overline{\a}^k_{k-1} ) =  \Psi^k_{k-2} \circ \D^k_{\b_{i+1}} (\g^k_{i+1})
= \D^k_{\b_{\psi(i+2)}} \circ \Psi^k_{k-2} (\g^k_{i+1}) =  \D^k_{\b_{\psi(i+2)}} ((\g^k_{i+1})'), \end{equation}
 where $\b_{\psi(i+2)}^k$ is defined as follows: The image of the curve $\b^k_{i+1}$ under   $\Psi^k_{k-2}$ is the convex curve  $\b^k_{\psi(i+2)}$ (depicted on the right in Figure~\ref{fig: ex10a}) enclosing two adjacent holes whose geometric orders are given as   $ \{\psi(i+2), \psi(i+2)+1\}$.  Here $\D^k_{\b_{\psi(i+2)}}$ denotes the counterclockwise half-twist in the subdisk bounded by  $\b^k_{\psi(i+2)}$.

Note that the order of the purple coloured hole depicted in the copy of $D_k$ carrying $(\g^k_{i+1})'$  in Figure~\ref{fig: ex10a} is $\psi (i+2)$, and the blue hole right below it has order $\psi (i+2) +1$. In fact,  $\psi(i+2)$ is the local geometric order of the wire $w_k$, and $\psi(i+2)+1$ is the local geometric order of the wire $w_j$, before the last twisting. Recall that these two wires are always geometrically consecutive and they swap their order each time they intersect.

In the second equality in \eqref{eq1} above, we used the fact that
$$\Psi^k_{k-2} \circ \D^k_{\b_{i+1}}  \circ  (\Psi^k_{k-2})^{-1} = \D^k_{\b_{\psi(i+2)}}$$ which can be easily verified.

Finally, applying $\D^k_{\b_{\psi(i+2)}}$ (on the subdisk bounded by $\b^k_{\psi(i+2)}$ enclosing only the blue and the purple holes)  to $(\g^k_{i+1})'$  in Figure~\ref{fig: ex10a} we get the dashed curve $\d(x^k_{k-1})$. Therefore, coupled with \eqref{eq1}, we conclude that  $ \Psi^k_{k-2}(\overline{\a}^k_{k-1} )= \d(x^k_{k-1})$. \end{proof}

\subsection{Proofs of the corollaries and some examples}

\begin{proof}[Proof of Corollary~\ref{cor: arrangement}] According to Theorem~\ref{thm: main}, for each Stein filling $W_{p,q}(\textbf{n})$ of the contact $3$-manifold $(L(p,q), \xi_{can})$, there is an (unbraided) wiring diagram $\mathcal{W}_\textbf{n}({\textbf{m}})$, constructed as in Definition~\ref{def: wd}, based on the  blowup sequence $(0) \to (1,1) \to \cdots \to \textbf{n}$ and $\textbf{m} = \textbf{b}-\textbf{n}$, whose associated planar Lefschetz fibration $f \colon  W \to D^2$ obtained by the method of Plamenevskaya and  Starkston \cite[Section 5.2]{ps}  is equivalent to the one constructed by the authors \cite{bo}.  Moreover, using  \cite[Proposition 5.5]{ps}, the wiring diagram $\mathcal{W}_\textbf{n}({\textbf{m}})$, viewed as a collection of intersecting curves in $\bfr \times \bfc$, can be extended to an explicit  collection of symplectic graphical disks $\G_1, \ldots , \G_n$  in $\bfc \times \bfc$, with marked points $p_1, \ldots, p_m \in \bigcup_i \G_i$ including all the intersection points of these disks. Note that we can assume that each intersection of these disks is positive and transverse, and non-intersection points are allowed as free marked points.   Furthermore,  by  \cite[Proposition 5.6]{ps},  the Stein filling $W_{p,q}(\textbf{n})$ is supported by the restriction of the Lefchetz fibration $$\pi_{x} \circ \Pi \colon \bfc^2 \# m \cpb \setminus (\widetilde{\G}_1 \cup\cdots\cup  \widetilde{\G}_n) \to \bfc$$ to a Milnor ball (to give compact fibres), where $\pi_x \colon \bfc^2 \to \bfc$  denotes the projection onto the first coordinate,  $\widetilde{\G}_1,  \ldots, \widetilde{\G}_n$ are the proper transforms of $\G_1,  \ldots, \G_n$, and $\Pi$ is the blowup map. Finally, the Lefschetz fibrations $f$ and $\pi_{x} \circ \Pi$ are equivalent by the discussion in  \cite[Section 5.4]{ps}. Note that the last statement in Corollary~\ref{cor: arrangement} follows immediately from  \cite[Proposition 5.8]{ps}. \end{proof}

\begin{proof}[Proof of Corollary~\ref{cor: incidence}] Let $W_{p,q}(\textbf{n})$ be the Stein filling of $(L(p,q), \xi_{can})$, and let $\mathcal{W}_\textbf{n}({\textbf{m}})$ be the (unbraided) wiring diagram constructed as in Definition~\ref{def: wd} based on the  blowup sequence $(0) \to (1,1) \to \cdots \to \textbf{n}$ and $\textbf{m} = \textbf{b}-\textbf{n}$.  Let $\G = \G_1 \cup \cdots \cup \G_n$ be the collection of symplectic graphical disks, with marked points $p_1, \ldots, p_m \in  \G$ as in Corollary~\ref{cor: arrangement}. Note that the incidence matrix $\mathcal{I}_\textbf{n}({\textbf{m}}): = \mathcal{I}(\G, \{p_j\})$ can be read off from the wiring diagram $\mathcal{W}_\textbf{n}({\textbf{m}})$, where each wire $w_i$ is identified with the disk $\G_i$ and the $x$- and $y$-type marked points are identified with $p_1, \ldots, p_m$.

To compute the incidence matrix  $\mathcal{I}_\textbf{n}({\textbf{m}})$, we enumerate the wires from top to bottom with respect to their {\em geometric} order on the right-hand side of the diagram $\mathcal{W}_\textbf{n}({\textbf{m}})$, and we enumerate the $x$- and $y$-type marked points using their natural geometric order from right to left. The matrix $\mathcal{I}_\textbf{n}({\textbf{m}})$ can be viewed as an extension of the incidence matrix $\mathcal{I}_\textbf{n}$  corresponding to the wiring diagram $\mathcal{W}_\textbf{n}$, which  carries only the $x$-type marked points. In the following, first we inductively construct $\mathcal{I}_\textbf{n}$ depending on the blowup sequence $(0) \to (1,1) \to \cdots \to \textbf{n}$, starting from the matrix
\[
\begin{blockarray}{cc}
 & x_1\\
\begin{block}{c(c)}
  w_1 & 0   \\
  w_2 & 1 \\
 \end{block}
\end{blockarray} \]  corresponding to the  wiring diagram with two wires $w_1, w_2$ and one marked point $x_1$, obtained by the initial  blowup $(0) \to (1,1)$. Suppose that we have constructed the $r \times (r-1)$ incidence matrix for a wiring diagram with $r$ wires and $r-1$ marked points $x_{1}, \ldots, x_{r-1}$. Assume that we insert the next wire $w_{r+1}$ into the diagram according to an exterior blowup. Then we ``blowup the incidence matrix" at hand by adding a last row and a last column which consist of all zeros except a $1$ at the lower right corner, to obtain the $(r+1) \times r$ incidence matrix.

Now assume that the last  wire $w_{r+1}$ is inserted into the diagram according to an interior blowup at the $i$th term. Then we blowup the incidence matrix at hand by {\em inserting}  a new row below the $(i+1)$st row and a last column so that the new row is copied from the row above and the last column is of the form $$[\underbrace{1,\ldots, 1}_{i}, 0,1, \underbrace{0, \ldots, 0}_{\geq 0}]^{T}$$ to obtain the $(r+1) \times r$ incidence matrix.

The matrix  $\mathcal{I}_\textbf{n}({\textbf{m}})$ can be obtained from $\mathcal{I}_\textbf{n}$ in a standard way based only on the $k$-tuple $\textbf{m}$. To extend $\mathcal{I}_\textbf{n}$ to $\mathcal{I}_\textbf{n}({\textbf{m}})$, for each $1 \leq s \leq k$, we just insert a column labelled with $y_s$, so that the first $s$ entries from the top of the column labelled with $y_s$ is $1$, and the rest are $0$. If $y_s$ has multiplicity $m_s$, then we repeat $m_s$-times the column labelled with $y_s$.  \end{proof}

Here we illustrate the proof of Corollary~\ref{cor: incidence} for the wiring diagram in Figure~\ref{fig: example2}. The incidence matrix $\mathcal{I}_\textbf{n}$ can be obtained algorithmically from the blowup sequence $(1,1) \to (1,2,1) \to (2,1,3,1) \to (2,1,4,1,2)=\textbf{n}$ used to construct  $\mathcal{W}_{\textbf{n}}$ as follows.

\[
\begin{blockarray}{cc}
 & x_1\\
\begin{block}{c(c)}
  w_1 & 0   \\
  w_2 & 1 \\
 \end{block}
\end{blockarray} \; \xrightarrow[\text{blowup}]{\text{exterior}}
\begin{blockarray}{ccc}
 & x_1 & x_2 \\
\begin{block}{c(cc)}
  w_1 & 0 & \blue{0} \\
  w_2 & 1 & \blue{0} \\
  w_3 & \blue{0} & \blue{1} \\
 \end{block}
\end{blockarray} \; \xrightarrow[\text{blowup}]{\text{interior}}
\begin{blockarray}{cccc}
 & x_1 & x_2 & x_3 \\
\begin{block}{c(ccc)}
  w_1 & 0 & 0 & \blue{1} \\
  w_2 & 1 & 0 & \blue{0} \\
  w_4 & \blue{1} & \blue{0} & \blue{1} \\
  w_3 & 0 & 1 & \blue{0} \\
 \end{block}
\end{blockarray} \; \xrightarrow[\text{blowup}]{\text{interior}}
\begin{blockarray}{ccccc}
 & x_1 & x_2 & x_3 & x_4\\
\begin{block}{c(cccc)}
  w_1 & 0 & 0 & 1& \blue{1} \\
  w_2 & 1 & 0 & 0 &\blue{1} \\
  w_4 & 1 & 0 & 1 & \blue{1} \\
  w_3 & 0 & 1 & 0& \blue{0} \\
  w_5 & \blue{0} & \blue{1} & \blue{0} & \blue{1}\\
 \end{block}
\end{blockarray}
 \]

The first arrow above corresponds to an exterior blowup, where we insert the last row and the last column which has a $1$ in the corner and $0$ everywhere else. The second arrow corresponds to an interior blowup at the first term, and we insert the third row and the last column so that the first two entries in the third row are copied form the row above and the entries in the last column are $1,0,1,0$ from the top. The last arrow corresponds to an interior blowup at the third term, and  we insert the fifth row and the last column so that the first two entries in the third row are copied form the row above and the entries in the last column are $1,1,1,0,1$ from the top.

Let $\textbf{m}=(0,1,1,1,1)$. To extend the $5 \times 4$ incidence matrix $\mathcal{I}_\textbf{n}$ for $\mathcal{W}_\textbf{n}$ to the $5 \times 8$ incidence matrix $\mathcal{I}_\textbf{n}({\textbf{m}})$ for  $\mathcal{W}_{\textbf{n}}(\textbf{m})$, we insert the columns labelled with $y_2, y_3, y_4, y_5$ where the first $s$ entries from the top of the column labelled with $y_s$ is $1$, and the rest are $0$.

\[
\begin{blockarray}{ccccccccc}
 & x_1 & x_2 & x_3 & x_4 & y_2  & y_3 & y_4 & y_5 \\
\begin{block}{c(cccc|cccc)}
  w_1 & 0 & 0 & 1 & 1 & 1 & 1 & 1 & 1  \\
  w_2 & 1 & 0 & 0 & 1 & 1 & 1 & 1 & 1 \\
  w_4 & 1 & 0 & 1 & 1 & 0 & 1 & 1 & 1  \\
  w_3 & 0 & 1 & 0 & 0 & 0 & 0 & 1 & 1 \\
  w_5 & 0 & 1 & 0 & 1 & 0 & 0 & 0 & 1 \\
\end{block}
\end{blockarray} = \mathcal{I}_\textbf{n}({\textbf{m}})
 \]

\begin{proof}[Proof of Corollary~\ref{cor: equiv}]  A matrix
$M$ with $r\geq 2$ rows $$v_i =(v_{ij}), \quad  v_{ij} \in \{0,1\}$$ is called a CQS matrix (see \cite[Definition 6.8]{djvs}) if $ \langle v_i, v_j \rangle = \langle v_i, v_i \rangle  -1$ holds for all  $1\leq i < j \leq r$, where $\langle \cdot,\cdot \rangle$ denotes the standard inner product, and CQS  stands for cyclic quotient singularity.

 Let $X$ be a cyclic quotient singularity whose singularity link is $(L(p,q), \xi_{can})$. According to  \cite[Theorem 6.18]{djvs}, there is a bijection between the smoothing components of $X$ and incidence matrices of the picture deformations of the decorated curve $(\mathcal C,l)$ with smooth branches representing $X$.  Moreover, the incidence matrices are in one-to-one correspondence with CQS matrices, up to permutations of the columns.

 Let $\mathcal{I}^R_{\textbf{n}}(\textbf{m})$ be the matrix obtained from $\mathcal{I}_{\textbf{n}}(\textbf{m})$ by reversing the order of the rows. As in
\cite[Lemma 6.11]{djvs}, one can check that  $\mathcal{I}^R_{\textbf{n}}(\textbf{m})$ is a CQS matrix and that there is a bijection between CQS matrices and the set of matrices of the form $\mathcal{I}^R_{\textbf{n}}(\textbf{m})$. The reason that we had to reverse the order of the rows of the matrix $\mathcal{I}_{\textbf{n}}(\textbf{m})$  is simply because  in the present paper, to construct $\mathcal{I}_{\textbf{n}}(\textbf{m})$,     we enumerated the wires in $\mathcal{W}_{\textbf{n}}(\textbf{m})$ from top to bottom, as opposed to bottom to top, with respect to their geometric order on the right-hand side of the diagram.

Finally, we give an explicit one-to-one correspondence between the Stein fillings of the contact singularity link $(L(p,q), \xi_{can})$ and the Milnor fibres of the associated cyclic quotient singularity. Let $W_{p,q}(\textbf{n})$ be the Stein filling given by Lisca obtained by the blowup sequence $(0) \to (1,1) \to \cdots \to \textbf{n}$.  Using the same blowup sequence and $\textbf{m} = \textbf{b} - \textbf{n}$, we can construct a CQS matrix  $\mathcal{I}^R_{\textbf{n}}(\textbf{m})$ as in the proof of  Corollary~\ref{cor: incidence}, which corresponds to a picture deformation, and hence gives a Milnor fibre as in \cite{djvs}. The Stein filling $W_{p,q}(\textbf{n})$ is diffeomorphic to the Milnor fibre, because while the  wiring diagram $\mathcal{W}_\textbf{n}({\textbf{m}})$ determines $W_{p,q}(\textbf{n})$, the picture deformation, which is in the same combinatorial equivalence class as the configuration of symplectic graphical disks arising from $\mathcal{W}_\textbf{n}({\textbf{m}})$, determines the Milnor fibre.

Conversely, for any Milnor fibre, which is obtained from a picture deformation whose incidence matrix is a CQS matrix, one can read off the pair $(\textbf{n}, \textbf{m})$ as in \cite[Proposition 6.12]{djvs}, and therefore construct the wiring diagram  $\mathcal{W}_\textbf{n}({\textbf{m}})$, so that the configuration of symplectic graphical disks arising from $\mathcal{W}_\textbf{n}({\textbf{m}})$, is in the same combinatorial equivalence class as the picture deformation.
\end{proof}

\begin{proof}[Proof of Corollary~\ref{cor: scott}] It is well-known that for any contact singularity link $(L(p,q), \xi_{can})$, the Milnor fibre of the Artin smoothing component of the corresponding cyclic quotient singularity gives the Stein filling $W_{(p, q)}((1,2,\ldots,2,1))$, which is Stein deformation equivalent to the one obtained by deforming the symplectic structure on the minimal resolution of the singularity; see \cite{bd}.

We describe a decorated germ $(\mathcal C,l)$ associated to the pair of coprime integers $p>q\geq 1$ such that $(\mathcal C,l)$
determines the cyclic quotient singularity with link $L(p,q)$ via the construction of de Jong and van Straten \cite{djvs}.
We follow the description given in \cite{npp}.
Suppose that $\frac pq=[a_1,\ldots,a_m]$. Let $G$ be the decorated linear graph having $m$ vertices $v_1,\ldots,v_m$ with the vertex $v_i$ weighted by the integer $-a_i$. Then $G$ is the dual resolution graph of a cyclic quotient
singularity with link $L(p,q)$. Let $G'$ be the simple graph obtained from $G$ by attaching $a_1-1$ new vertices to $v_1$
and $a_i-2$ new vertices to each vertex $v_i$ for $i>1$. Assign weight $-1$ to each new vertex. Finally, let $G''$
denote the graph obtained from $G''$ by endowing an arrowhead to each new vertex. Then $G''$ is an embedded
resolution graph of a germ of a plane curve $\mathcal C=\bigcup_i C_i$ with \emph{smooth} irreducible components corresponding to
the arrowheads in $G''$. The order of intersection of $C_i$ and $C_j$ corresponding to two distinct arrows is the number
of vertices on the intersection of the geodesics between the arrows and $v_m$. Also $l=(l_i)$, where the weight $l_i$ is the number of vertices on the geodesic
from the arrowhead corresponding to $C_i$ to $v_m$.

Now every continued fraction expansion $[c_1,\ldots,c_m]$, with $c_i\geq 2$ for all $i$, can be obtained from $[2]$ by repeated
applications of the following two operations:
\begin{enumerate}[(i)]
\item $[a_1,\ldots,a_r]\mapsto [a_1,\ldots,a_r+1]$,
\item $[a_1,\ldots,a_r]\mapsto [a_1,\ldots,a_r,2]$.
\end{enumerate}
Let $[b_1,\ldots,b_s]$ denote the dual string of $[a_1,\ldots,a_r]$, i.e. if $\frac pq=[a_1,\ldots,a_r]$, then
$\frac p{p-q}=[b_1,\ldots,b_s]$. Then the dual string changes for each of these two operations in the following way:
\begin{enumerate}[(i)]
\item $[b_1,\ldots,b_s]\mapsto [b_1,\ldots,b_s,2]$,
\item $[b_1,\ldots,b_s]\mapsto [b_1,\ldots,b_s+1]$.
\end{enumerate}
This follows immediately from a consideration of Riemenschneider's point diagrams (see \cite{r} or \cite{p}). We check that if the statement of
the corollary holds for a pair $(p,q)$ with $\frac pq=[a_1,\ldots,a_r]$, then it continues to hold if we replace
$(p,q)$ by $(p',q')$, where $\frac {p'}{q'}=[a_1,\ldots,a_r+1]$ or $\frac {p'}{q'}=[a_1,\ldots,a_r,2]$. As the statement
holds trivially if $\frac pq=[2]$, by induction, it will follow that the statement holds for every pair of coprime integers $p>q\geq 1$.

Suppose first that $\frac {p'}{q'}=[a_1,\ldots,a_r+1]$. Then we have $\frac {p'}{p'-q'}=[b_1,\ldots,b_s,2]$, where $[b_1,\ldots,b_s]=\frac {p}{p-q}$. Notice that in the wiring diagram for the planar Lefschetz fibration $$ W_{(p,q)}((1,2,\ldots,2,1)) \to D^2$$
constructed according to Theorem \ref{thm: main}, there is a final marked point $y_s$ through which all the wires
pass. It is easy to see that that the wiring diagram for the planar Lefschetz fibration
$W_{(p',q')}((1,2,\ldots,2,1))\to D^2$ is given by taking the wiring diagram for the planar Lefschetz fibration $W_{(p,q)}((1,2,\ldots,2,1)) \to D^2$ and inserting a new
wire $w_{s+1}$ at the bottom of the diagram on the right hand side, which runs under the wiring diagram  up to the point
that  it passes through the marked point $y_s$ (turning it into a $y_{s+1}$-type marked point) and then runs above the diagram. In addition, a single new marked point is placed on the new wire $w_{s+1}$.

Let $(\mathcal C,l)$ denote the decorated curve associated to the pair of integers $(p,q)$ constructed as above. Then
$\mathcal C$ will consist of $s$ curvettas by the induction hypothesis.
Let $(\mathcal C',l')$ denote the decorated curve associated to the pair $(p',q')$. Then it is easy to see that $\mathcal C'$ can
be obtained from $\mathcal C$ by adding an extra curvetta $C_{s+1}$ which has intersection multiplicity $1$ with each curvetta
$C_i,i\leq s$. In addition, $l'$ is given by setting $l'_i=l_i$ for $i\leq s$ and $l_{s+1}=2$. Since the Scott
deformation of $(\mathcal C,l)$ is combinatorially equivalent to the symplectic disk arrangement associated to the
wiring diagram for $W_{(p,q)}((1,2,\ldots,2,1))$ (by the induction hypothesis), it is easy to see that the Scott
deformation of $(\mathcal C',l')$ is combinatorially equivalent to the symplectic disk arrangement associated to the
wiring diagram for $W_{(p',q')}((1,2,\ldots,2,1))$.

Now suppose that $\frac {p'}{q'}=[a_1,\ldots,a_r,2]$. Then we have $\frac {p'}{p'-q'}=[b_1,\ldots,b_s+1]$.
In this case, the wiring diagram associated to $W_{(p',q')}((1,2,\ldots,2,1))$ can be obtained from the wiring diagram
associated to $W_{(p,q)}((1,2,\ldots,2,1))$ by adding one new simultaneous intersection point $y_s$ of all the wires.
Also the decorated curve $(\mathcal C',l')$ associated to the pair $(p',q')$ can be obtained from the decorated curve
$(\mathcal C,l)$ by increasing by $1$ the order of intersection between each pair of distinct curvettas $C_i$ and $C_j$
and by increasing each $l_i$ by $1$. Again it is easy to check that the Scott
deformation of $(\mathcal C',l')$ is combinatorially equivalent to the symplectic disk arrangement associated to the
wiring diagram for $W_{(p',q')}((1,2,\ldots,2,1))$.
\end{proof}

In the following we give an example to illustrate the proof of Corollary~\ref{cor: scott}. Note that the Stein  filling  $W_{(p, q)}((1,2,\ldots,2,1))$ is obtained via the sequence of {\em exterior} blowups $$(0) \to (1,1) \to (1,2,1) \to \cdots \to (1,2,\ldots,2,1),$$ and the corresponding unbraided wiring diagram can be drawn easily. The wiring diagram corresponding to $W_{(56, 17)}((1,2,2,2,1))$, for instance, is depicted in Figure~\ref{fig: example3}.

\begin{figure}[ht]  \relabelbox \small {\epsfxsize=4.3in
\centerline{\epsfbox{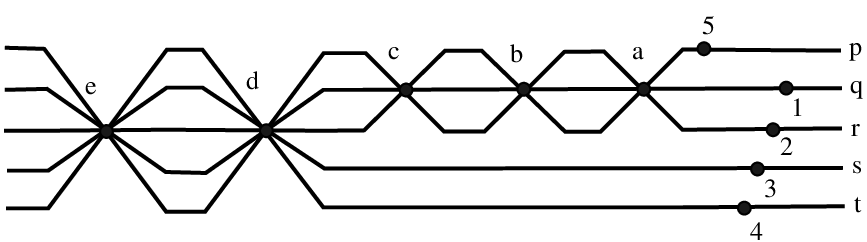}}} \relabel{a}{$y_3$} \relabel{b}{$y_3$} \relabel{c}{$y_3$} \relabel{d}{$y_5$}  \relabel{e}{$y_5$}  \relabel{3}{$x_3$} \relabel{2}{$x_2$} \relabel{1}{$x_1$}  \relabel{4}{$x_4$}    \relabel{5}{$y_1$} \relabel{p}{$w_1$} \relabel{q}{$w_2$} \relabel{r}{$w_3$} \relabel{s}{$w_4$}  \relabel{t}{$w_5$}
\endrelabelbox
\caption{Wiring diagram for the Stein filling $W_{(56, 17)}((1,2,2,2,1))$---the symplectic resolution.  }\label{fig: example3} \end{figure}

\begin{figure}[ht]  \relabelbox \small {\epsfxsize=5.7in
\centerline{\epsfbox{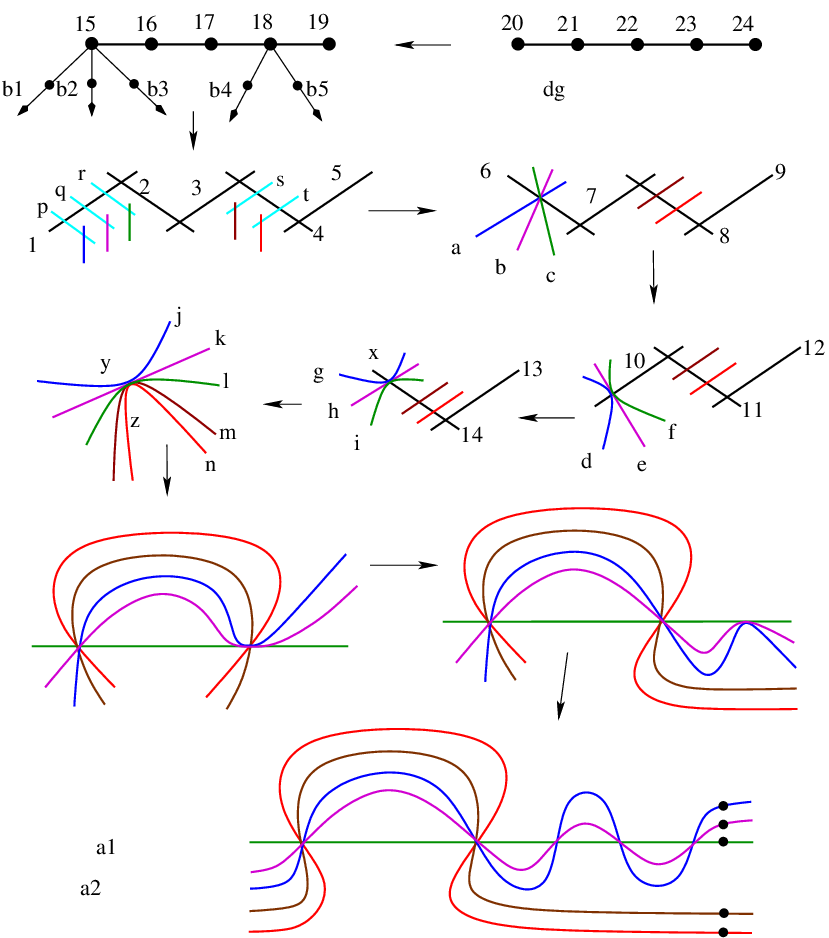}}} \relabel{3}{$-2$} \relabel{2}{$-2$} \relabel{1}{$-4$}  \relabel{4}{$-4$}    \relabel{5}{$-2$}  \relabel{7}{$-2$} \relabel{8}{$-2$}
 \relabel{9}{$-2$}  \relabel{6}{$-1$}   \relabel{11}{$-2$}
 \relabel{12}{$-2$}  \relabel{10}{$-1$} \relabel{13}{$-2$}  \relabel{14}{$-1$} \relabel{a}{$\boxed{2}$} \relabel{b}{$\boxed{2}$}  \relabel{c}{$\boxed{2}$}  \relabel{d}{$\boxed{3}$} \relabel{e}{$\boxed{3}$}  \relabel{f}{$\boxed{3}$}  \relabel{g}{$\boxed{4}$} \relabel{h}{$\boxed{4}$}  \relabel{i}{$\boxed{4}$} \relabel{j}{$\boxed{6}$} \relabel{k}{$\boxed{6}$}  \relabel{l}{$\boxed{6}$} \relabel{m}{$\boxed{3}$}  \relabel{n}{$\boxed{3}$} \relabel{p}{$-1$} \relabel{q}{$-1$} \relabel{r}{$-1$} \relabel{s}{$-1$}  \relabel{t}{$-1$} \relabel{x}{$\circled{3}$}  \relabel{y}{$\circled{5}$}   \relabel{z}{$\circled{2}$}  \relabel{15}{$-4$} \relabel{16}{$-2$} \relabel{17}{$-2$}  \relabel{18}{$-4$}    \relabel{19}{$-2$}  \relabel{20}{$-4$} \relabel{21}{$-2$}
 \relabel{22}{$-2$}  \relabel{23}{$-4$}   \relabel{24}{$-2$} \relabel{dg}{dual resolution graph} \relabel{a1}{Scott}  \relabel{a2}{deformation}
  \relabel{b1}{$-1$}  \relabel{b2}{$-1$} \relabel{b3}{$-1$} \relabel{b4}{$-1$} \relabel{b5}{$-1$} \endrelabelbox
\caption{From dual resolution graph of a cyclic quotient singularity to the Scott deformation of the decorated plane curve representing the singularity.  }\label{fig: scott} \end{figure}

 Next, we show that the arrangement of symplectic disks arising from the wiring diagram in Figure~\ref{fig: example3} is combinatorially equivalent  to the Scott deformation of $(\mathcal{C},l)$. First we note that $\frac{56}{17} = [4,2,2,4,2]$ and construct the curve $(\mathcal{C},l)$, as depicted in Figure~\ref{fig: scott}.

Note that $(\mathcal{C},l)$ consists of five curvettas with indicated weights and tangencies using the notation in \cite{ps}, where boxed numbers indicate the weights, circled numbers indicate the tangencies, and the other numbers are the self-intersection numbers of the rational curves.

We also depicted the Scott deformation of $(\mathcal{C},l)$ at the bottom in Figure~\ref{fig: scott}. It follows that, after enumerating the smooth branches of the Scott deformation of $(\mathcal{C},l)$ from top to bottom as they appear on the right-hand side,  the incidence matrix arising from  the Scott deformation coincides with the incidence matrix arising from the wiring diagram in  Figure~\ref{fig: example3}. Note that each branch of the Scott deformation $(\mathcal{C},l)$  includes a free marked point. The $5 \times 10$ incidence matrix $\mathcal{I}_{Artin} : =  \mathcal{I}_{\textbf{n}}(\textbf{m})$, where $\textbf{n}=(1,2,2,2,1)$ and $\textbf{m}=(1,0,3,0,2)$,  is given as follows.

\[
\begin{blockarray}{cccccccccccc}
 & x_1 & x_2 & x_3 & x_4 & y_1  & y_3 & y_3 & y_3 & y_5 & y_5\\
\begin{block}{c(cccc|ccccccc)}
  w_1 & 0 & 0 & 0 & 0 & 1 & 1 & 1 & 1 & 1 & 1 \\
  w_2 & 1 & 0 & 0 & 0 & 0 & 1 & 1 & 1 & 1 & 1 \\
  w_3 & 0 & 1 & 0 & 0 & 0 & 1 & 1 & 1 & 1 & 1 \\
  w_4 & 0 & 0 & 1 & 0 & 0 & 0 & 0 & 0 & 1 & 1 \\
  w_5 & 0 & 0 & 0 & 1 & 0 & 0 & 0 & 0 & 1 & 1 \\
\end{block}
\end{blockarray}= \mathcal{I}_{Artin}
 \]
Finally, one can easily obtain the {\em disjoint} vanishing cycles in $D_5$ of the planar Lefschetz fibration $W_{(56, 17)}((1,2,2,2,1)) \to D^2$, which we depicted in Figure~\ref{fig: ex4b}. As observed in \cite{ps}, this Lefschetz fibration agrees with the Lefschetz fibration constructed by Gay and Mark \cite{gm}, using the plumbing description $W_{(56, 17)}((1,2,2,2,1))$ given by the dual resolution graph.

\begin{figure}[ht]
  \relabelbox \small {\epsfxsize=1.8in
  \centerline{\epsfbox{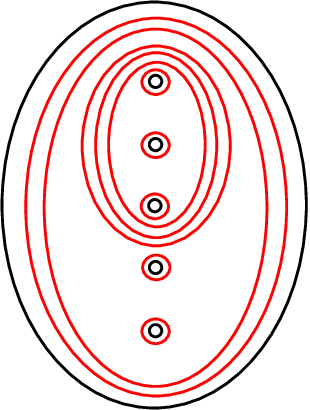}}}

\endrelabelbox
       \caption{The vanishing cycles of the planar Lefschetz fibration $W_{(56, 17)}((1,2,2,2,1)) \to D^2$.  }
        \label{fig: ex4b}
\end{figure}

Note that the total monodromy of the planar Lefschetz fibration $W_{(56, 17)}((1,2,2,2,1)) \to D^2$, the product of Dehn twists along the disjoint vanishing cycles depicted in Figure~\ref{fig: ex4b}, is the monodromy of the open book compatible with $(L(56,17), \xi_{can})$.  This monodromy has another positive factorization $$ D (\g_5)  D (\g_4) D (\g_3) D (\g_2)  D(\overline{\a}_4) D(\overline{\a}_3) D(\overline{\a}_2) D(\overline{\a}_1),$$ which is the total monodromy of the planar Lefschetz fibration $W_{(56, 17)}((2,1,4,1,2)) \to D^2$ we discussed in Section~\ref{subsec: exa}. \\

\end{document}